\theoremstyle{plain}
\newtheorem{theorem}{Theorem}[section]
\newtheorem{lemma}[theorem]{Lemma}
\newtheorem{proposition}[theorem]{Proposition}
\theoremstyle{definition}
\theoremstyle{remark}
\numberwithin{equation}{section}
\newcommand{\N}{\mathbb{N}}
\newcommand{\R}{\mathbb{R}}
\newcommand{\mX}{\mathbb{X}}
\newcommand{\mY}{\mathbb{Y}}
\newcommand{\mZ}{\mathbb{Z}}
\newcommand{\Z}{\mathbb{Z}}
\newcommand{\p}{\mathbb{P}}
\newcommand{\cC}{\mathcal{C}}
\newcommand{\cG}{\mathcal{G}}
\newcommand{\cF}{\mathcal{F}}
\newcommand{\cK}{\mathcal{K}}
\newcommand{\cL}{\mathcal{L}}
\newcommand{\cN}{\mathcal{N}}
\newcommand{\cP}{\mathcal{P}}
\newcommand{\cR}{\mathcal{R}}
\newcommand{\cQ}{\mathcal{Q}}
\newcommand{\fD}{\mathfrak{D}}
\newcommand{\fG}{\mathfrak{G}}
\newcommand{\fI}{\mathfrak{I}}
\newcommand{\fJ}{\mathfrak{J}}
\newcommand{\E}[1]{\mathbb{E}\left [  #1 \right ]}
\newcommand{\Ec}[1]{\mathbb{E}^*\left [  #1 \right ]}
\newcommand{\V}{\operatorname{Var}}
\renewcommand{\epsilon}{\varepsilon}
\renewcommand{\phi}{\varphi}
\newcommand{\intd}[1]{\mathrm{d}#1}
\newcommand{\1}[1]{\,\mathds{1}\! \left\{ #1 \right\} }
\newcommand{\cov}{\operatorname{Cov} }
\newcommand{\poi}{\operatorname{Poi}}
\newcommand{\diam}{\operatorname{diam}}
\newcommand{\ol}{\overline}
\newcommand{\wt}{\widetilde}
\begin{document}

\allowdisplaybreaks

\begin{frontmatter}

\title{On approximation theorems for the Euler characteristic with applications to the bootstrap
}
\runtitle{Approximation theorems and bootstrap for the EC}


\author{\fnms{Johannes} \snm{Krebs}\corref{}
\ead[label=e1]{krebs@uni-heidelberg.de}}
\address{Institute of Applied Mathematics, Heidelberg University\\ 69120 Heidelberg, Germany\\ \printead{e1}}

\author{\fnms{Benjamin} \snm{Roycraft}
\ead[label=e2]{btroycraft@ucdavis.edu}}
\address{Department of Statistics, University of California\\ Davis, CA, 95616, USA\\ \printead{e2}}

\author{\fnms{Wolfgang} \snm{Polonik}
\ead[label=e3]{wpolonik@ucdavis.edu}}
\address{Department of Statistics, University of California\\ Davis, CA, 95616, USA\\ \printead{e3}}

\runauthor{Krebs, Roycraft, Polonik}

\begin{abstract}
We study approximation theorems for the Euler characteristic of the Vietoris-Rips and 
\v Cech filtration. The filtration is obtained from a Poisson or binomial sampling scheme in the critical regime. We apply our results to the smooth bootstrap of the Euler characteristic and determine its rate of convergence in the Kantorovich-Wasserstein distance and in the Kolmogorov distance.
\end{abstract}

\begin{keyword}[class=MSC]
\kwd[Primary ]{60F05; 62F40}
\kwd[; secondary ]{60B10; 60D05}
\end{keyword}

\begin{keyword}
\kwd{Binomial process} \kwd{Bootstrap} \kwd{{\v C}ech complex} \kwd{Critical regime} \kwd{Euler characteristics} \kwd{Functional central limit theorem} \kwd{Normal approximation} \kwd{Poisson process} \kwd{Random geometric complexes} \kwd{Smooth bootstrap} \kwd{Stochastic geometry} \kwd{Topological data analysis}  \kwd{Weak convergence}
\end{keyword}



\end{frontmatter}

\section{Introduction}
In this manuscript we study approximation results and central limit theorems for the Euler characteristic (EC) $\chi$ of a simplicial complex $\cK$. The EC is a simple yet major functional in topological data analysis (TDA). Recent contributions concerning the EC in TDA include Adler \cite{adler2008some}, Turner et al. \cite{turner2014}, Decreusefond et al. \cite{decreusefond2014simplicial} and Crawford et al. \cite{crawford2016functional}. See also \cite{Li2018plantmorph,Amezquita2020TDAbiol,Wilding2021cosmologies}. Multivariate central limit theorems for the EC were proved in Hug et al. \cite{hug2016second}. Ergodic theorems for the EC are given in Schneider and Weil \cite{schneider2008stochastic}. Thomas and Owada \cite{thomas2019functional} derive a functional strong law of large numbers and a functional central limit theorem (FCLT) for the EC obtained from the Vietoris-Rips complex of a Poisson process in the critical regime.

Among others, our results extend the findings of this last work to the \v Cech filtration and a binomial sampling scheme. 
More precisely, we obtain a FCLT and derive rates of convergence in the Kantorovich-Wasserstein distance and in the Kolmogorov distance at a fixed time parameter $t$ of the empirical process
\begin{align}\label{E:EmpProcess}
		\ol \chi_n \colon		[0,T] \to \R,  t \mapsto n^{-1/2} \big( \chi( \cK_{t,n} ) - \E{ \chi( \cK_{t,n} )} \big),
\end{align}
where $T\in\R_+$ and where $\cK_{t,n} = \cK_t(n^{1/d} \mX_n)$ for certain point clouds $\mX_n$ (given in detail below) whose sample size $n$ tends to infinity. The underlying filtration $(\cK_{t,n}: t\in [0,T])$ is the \v Cech or Vietoris-Rips complex of a Poisson or binomial point cloud in the critical regime. We apply these results to a smooth bootstrap procedure proposed in Roycraft et al. \cite{roycraft2020bootstrapping} and derive rates of convergence for the bootstrap procedure of the EC.

The quantification of the rates of convergence relies on normal approximations for general non-additive functionals in stochastic geometry. Such approximations are based on Stein's method and reduce to variance bounding tasks. These results are obtained from the seminal works of Chatterjee \cite{chatterjee2008new} and Lachi{\` e}ze-Rey and Peccati \cite{lachieze2017new} who derive normal approximations for the Kantorovich-Wasserstein distance and the Kolmogorov distance.

The EC is an important functional in persistent homology, the current major branch of TDA. 
To the best of our knowledge the present contribution is the first to obtain rates of convergence for this topological invariant.
Moreover, the presented FCLT is the first that also covers the case for \v Cech filtration, completing the investigation of Thomas and Owada \cite{thomas2019functional} who observed the additional technical difficulties that arise when proving the tightness of the processes in \eqref{E:EmpProcess} for the \v Cech complex. In addition, we extend the FCLT to a binomial sampling scheme by a Poissonisation argument, this setting has previously been not considered.

As a consequence of the continuous mapping theorem, the FCLT opens the door to obtain the asymptotic distribution of a variety of important functionals applied to the EC such as smoothed integral statistics or the running supremum.

Modern TDA is founded in the groundbreaking contributions of Edelsbrunner et al. \cite{edelsbrunner2000topological}, Zomorodian and Carlsson 
\cite{zomorodian2005computing} and Carlsson \cite{carlsson2009topology}. The presented central limit theorems rely heavily on the approach of Penrose and Yukich \cite{penrose2001central}, who define and use the idea of stabilizing functionals of Lee (see \cite{lee1997central, lee1999central}). This approach has also been used to establish central limit theorems for Betti numbers and persistent Betti numbers: Yogeshwaran et al. \cite{yogeshwaran2017random} were the first to establish a central limit theorem for Betti numbers from a stationary Poisson process with unit intensity. Hiraoka et al. \cite{hiraoka2018limit} extended this result to persistent Betti numbers from a stationary Poisson process. Krebs and Polonik \cite{krebs2019betti} established the strong stabilizing property of persistent Betti numbers and extended the validity of the central limit theorem to the binomial point process with a non-constant density. Krebs and Hirsch \cite{krebs2020cylinder} studied functional central limit theorems for persistent Betti numbers on a cylindrical domain. Betti numbers of $B$-bounded features have been studied by Biscio et al. \cite{svane}.

So far, bootstrap procedures in TDA mainly were based on the assumption that an iid sample of persistence diagrams is available; see Chazal et al. \cite{chazal2013bootstrap}, Fasy et al. \cite{fasy2014confidence}, Berry et al. \cite{berry2020}. Large sample results of these bootstrap methods are then shown for the number of available persistence diagrams, $N$, tending to infinity. An alternative set-up is to consider an individual persistence diagram, based on a single data set of size $n$. In this setting, that so far has received much less attention, the constructions are based on a Lipschitz-type property of the persistence diagram (also called `stability'). However, this approach only leads to asymptotically conservative results; see \cite{fasy2014confidence}. For another application of the bootstrap, see Shin et al. \cite{shinconfidence}.
For the upper level set filtration of a kernel density estimator, Chazal et al. \cite{chazal2014robust} successfully construct bootstrap confidence sets directly, without invoking stability.

Following the ideas of Roycraft et al. \cite{roycraft2020bootstrapping}, we here also consider the second approach, based on an individual sample of iid date of size $n$. In contrast to the existing results in the literature, this approach results in asymptotically valid bootstrap procedures. More precisely, given a point cloud in $\R^d$ of iid data points which are distributed according to an unknown density $\kappa$, our smooth bootstrap procedure relies on replicate point cloud data drawn from an appropriate density estimate $\hat\kappa$. We quantify the Kantorovich-Wasserstein distance and the Kolmogorov distance between the bootstrapped EC and the true EC in terms of the sample size and the supremum norm $\|\kappa-\hat\kappa\|_\infty$. Depending on the density estimator used, we obtain rates of convergence for our smooth bootstrap procedure. In the case of the Kantorovich-Wasserstein distance, we also show a rate of convergence in the functional setting, where the EC defines a curve on the interval $[0,T]$.

A simulation study is provided, investigating the small-sample properties of the bootstrap applied to the Euler characteristic curve in the functional setting.

The rest of the paper is organized as follows. Section~\ref{Section_Definitions} provides relevant notation and definitions. We present all main results in Section~\ref{Section_Results}. Technical details are given in Section~\ref{Section_TechnicalResults} and Appendix~\ref{Appendix}.

\section{Notation and definitions}\label{Section_Definitions}
We write $\N$ for the natural numbers $\{1,2,\ldots\}$ and $\N_0$ for $\N\cup\{0\}$.
The cardinality of a (finite) set $A$ is $\# A$. Let $P$ be a point process on $\R^d$ and $A\subset \R^d$. We write $P(A) = \# \{x\in P: x\in A \}$ for the random measure of $A$ under $P$. For $z\in\R^d$, $Q(z)$ denotes the cube $z + [-1/2 , 1/2]^d$. For $m\in\N $, the set $\{1,\ldots,m\}$  is abbreviated by $[m]$.
Moreover, we write $a \preceq b$, for $a,b\in\R^d$, if $a$ is equal to $b$ or if $a$ precedes $b$ in the lexicographic ordering, and $e_d$ for the all-one vector $(1,\ldots,1) \in \R^d$.

Write $B_n = [-n^{1/d} /2 , n^{1/d}/2]$. Let $\cP, \cP'$ be independent homogeneous Poisson processes on $\R^d$ with unit intensity. For $z\in\Z^d$, we let $\cF_z$ denote the $\sigma$-field $\sigma\{ \cP \cap Q(y): y \preceq z \}$ generated by the Poisson points of $\cP$ inside the cubes $Q(y)$ for all $y\in\Z^d$ equal to or preceding $z$ lexicographically. 

Next, we shortly introduce the main mathematical objects from TDA, which are needed in this paper. We refer to Boissonnat et al. \cite{boissonnat2018geometric} for a more thorough introduction to the subject.

Given a finite set $P$, an \textit{(abstract) simplicial complex} $\cK$ is a collection of non-empty subsets of $P$ which satisfy (i) if $x\in P$, then $\{x\} \in \cK$ and (ii) if $\sigma \in \cK$ and $\tau\subseteq \sigma$, then $\tau\in\cK$. If $\sigma\in \cK$ and $\# \sigma = k+1$, with $k\in\N_0$, then the simplex $\sigma$ has dimension $k$, viz., $\dim \sigma = k$.

The EC of a (finite) simplicial complex $\cK$ is given by the alternating sum of its simplex counts $S_k(\cK) = \# \{\sigma \in\cK: \dim \sigma = k \}$, viz.,
\[
		\chi(\cK) = \sum_{k\in\N_0} (-1)^k S_k(\cK).
\]
In this work, we consider the \v Cech and Vietoris-Rips complex constructed from point clouds in $\R^d$. Given a finite subset $P$ of the Euclidean space $\R^d$, the {\v C}ech filtration $\cC(P)=(\cC_t(P):t\ge 0)$ and the Vietoris-Rips filtration $\cR(P)=(\cR_t(P):t\ge 0)$ are defined through the following simplicial complexes
\begin{align*} 
				\cC_t(P) &= \{  \sigma \subseteq P, \bigcap_{x\in \sigma} B(x,t)\neq \emptyset \}, \\
				\cR_t(P) &= \{ \sigma \subseteq P, \diam(\sigma)\le t \},
\end{align*}
where $B(x,t) = \{y\in\R^d: \|x-y\|\le t\}$ is a closed Euclidean ball, and $\diam(\cdot)$ is the diameter of a set.

We use a generic notation and write $\cK_t$ for either the {\v C}ech or the Vietoris-Rips complex with parameter $t$ obtained from a random point cloud in $\R^d$.
The \v Cech filtration characterizes simplices in terms of the radius of their circumsphere, the smallest closed ball containing the given simplex. The Vietoris-Rips filtration relies on the pairwise distances between points in the simplex. This property not only makes the \v Cech filtration analytically more complex than the Vietoris-Rips filtration but also more computationally intensive to work with in practice. The filtration time of a simplex $\sigma$, written as $r(\sigma)$, is the smallest filtration parameter $t$ such that $\sigma$ is included in $\cK_t$. $r(\sigma)$ corresponds to the radius of the circumsphere of $\sigma$ in the \v Cech case, and to the maximum pairwise distance between the points of $\sigma$ for the Vietoris-Rips filtration.

Given an ordering of the simplices in the simplicial complex $\cK$, we can separate each $S_k(\cK)$ in positive simplices $S_k^+(\cK)$ and negative simplices $S_k^-(\cK)$, so $S_k(\cK) = S_k^+(\cK) + S_k^-(\cK)$. A $k$-simplex is positive if it creates a $k$-dimensional feature. It is negative if it kills a $(k-1)$-dimensional feature. The difference $S_k^+(\cK) - S_{k+1}^-(\cK)$ is the $k$th Betti number $\beta_k(\cK)$ of the simplicial complex $\cK$, see also \cite{boissonnat2018geometric}. Clearly, $S_0 = S_0^+$. This yields the well-known identity
\begin{align}
\begin{split}\label{ECBetti}
		&\chi(\cK) = S_0^+(\cK) + \sum_{k=1}^\infty (-1)^k S_k^+ (\cK) - \sum_{k=1}^\infty (-1)^{k-1} S_k^- (\cK) \\
		&= (S_0^+ (\cK) - S_1^- (\cK)) + \sum_{k=1}^\infty (-1)^k (S_k^+ (\cK) - S_{k+1}^- (\cK)) = \sum_{k=0}^{\infty} (-1)^k\beta_k (\cK).
		\end{split}
\end{align}
For \v Cech complexes of a $d$-dimensional point cloud, the right-hand side of \eqref{ECBetti} reduces to a sum of finitely many terms. This is because in $d$-dimensional space all Betti numbers $\beta_k$, $k\ge d$, are identically zero for the \v Cech complex, see also \cite{yogeshwaran2017random}. Using $0 = \beta_k(\cK) = S_k^+(\cK) - S_{k+1}^- (\cK)$ for all $k\ge d$, we see that in this case $\chi(\cK) = \sum_{k=0}^{d-1} (-1)^k S_k(\cK) + (-1)^d S_d^{-}(\cK)$.

Let $\kappa$ be a density function on $[0,1]^d$. Moreover, let $\mX_n = \{X_1,\ldots, X_n \}$ be a binomial process of length $n$, where the components $X_i$ are independently distributed with density $\kappa$. Furthermore, let $\cP_n$ be a non-homogenous Poisson process with intensity function $n\kappa$.

The underlying point cloud is allowed to be either the Poisson point cloud $n^{1/d} \cP_n$ or the scaled binomial point cloud $n^{1/d} \mX_n$. Here $\cK_{t, n}$ equals either $\cK_t(n^{1/d} \cP_n)$ or $\cK_t(n^{1/d} \mX_n)$ for a filtration parameter $t\in [0,T]$, $T<\infty$. We study the empirical process of the EC $(\ol \chi_n(t))_{t\in [0,T]}$ from \eqref{E:EmpProcess} based on these filtrations and point clouds. Obviously, $\ol\chi_n(0) \equiv 0$ for all $n\in\N$. To indicate the dependence on the density, we write $\ol \chi_{\kappa, n}$ for the EC obtained from an underlying density function $\kappa$. Moreover, $\cN_{0,1}$ denotes the law of the standard normal distribution.

The Kantorovich-Wasserstein distance between two Borel probability measures $\mu_1$ and $\mu_2$ on a metric space $(M,d)$ is
\begin{align*}
		d^M_W(\mu_1,\mu_2) &\coloneqq \sup \Biggl \{  \Biggl | \int f \intd \mu_1 - \int f \intd \mu_2 \Biggl |: \\
		&\qquad\qquad f\colon M\to \R \text{ is Lipschitz with Lipschitz constant } L_f \le 1 \Biggl \}.
\end{align*}
In the following, we study the Kantorovich-Wasserstein distance on $\R$, denoted by $d_W$.  We abbreviate the space of c{\` a}dl{\` a}g functions by $\fD = D([0,T])$. We write $d_W^{\fD}$ for the Kantorovich-Wasserstein distance on the normed vector space $( \fD, \| \cdot \|_\infty)$, where $\| \cdot \|_\infty$ is the sup-norm. The Kolmogorov distance between two Borel probability measures $\mu_1,\mu_2$ on the real numbers is
\[
		d_K(\mu_1,\mu_2) \coloneqq \sup\{ |\mu_1( (-\infty,u] ) - \mu_2( (-\infty,u] ) | : u\in\R \}.
\]
If $Z_1 \sim \mu_1$ and $Z_2 \sim \mu_2$, we also write $d_W(Z_1,\mu_2)$ and $d_W(Z_1,Z_2)$ for $d_W(\mu_1,\mu_2),$  and similarly for $d_K$. We adopt the same convention for the metric $d^{\fD}_W$ if $Z_1,Z_2$ are $\fD$-valued.

Let $\kappa$ be a bounded density function on $[0,1]^d$, i.e., $\|\kappa\|_\infty<\infty$. Write $B_{\infty}(\kappa,\rho)$ for the class of all densities $\nu$ on $[0,1]^d$ which satisfy $\|\kappa-\nu\|_\infty \le \rho$.

\section{Main results}\label{Section_Results}

\subsection{Approximation and central limit theorems}
We make use of the pioneering contributions of Chatterjee \cite{chatterjee2008new} (for the Kantorovich-Wasserstein distance) and of Lachi{\`e}ze-Rey and Peccati \cite{lachieze2017new} (for the Kolmogorov distance). We begin with an approximation result, which shows that the EC is locally Lipschitz-continuous in the underlying density function.
\begin{theorem}[Approximating property in the Kantorovich-Wasserstein distance]\label{T:ApproximationEuler}
Let $\kappa$ be a density on $[0,1]^d$ that is bounded, let $\rho\in\R_+$, and let $\nu\in B_{\infty}(\kappa,\rho)$. There are coupled Poisson processes $(\cP_n, \cQ_n )$ with intensities $(n\kappa, n\nu)$ and coupled binomial processes $(\mX_n,\mY_n)$ of length $n$ with densities $(\kappa,\nu)$, respectively, and a constant $C_{0,\kappa} \in \R_+$ depending on $\kappa$ and $\rho$ but not on $\nu$ (as long as $\nu\in B_\infty(\kappa,\rho)$), such that for all $n\in\N$
\begin{align}\label{E:ApproximationEuler0}
		\sup_{t\in [0,T] } \V( \ol \chi_{\kappa,n}(t) - \ol \chi_{\nu,n}(t) ) \le C_{0,\kappa } \| \kappa - \nu \|_{\infty}.
\end{align}
In particular, \[\sup_{t\in[0,T]} d_W (  \ol \chi_{\kappa,n}(t) ,  \ol \chi_{\nu,n}(t) ) \le \sqrt{C_{0,\kappa }} \| \kappa - \nu \|_{\infty}^{1/2}.\]
\end{theorem}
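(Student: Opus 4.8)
The plan is to establish the variance bound~\eqref{E:ApproximationEuler0}, from which the Wasserstein statement follows immediately: for any two real random variables $Z_1,Z_2$ defined on a common probability space one has the elementary bound $d_W(Z_1,Z_2) \le \E{|Z_1-Z_2|} \le (\E{(Z_1-Z_2)^2})^{1/2}$, and since $\ol\chi_{\kappa,n}(t)$ and $\ol\chi_{\nu,n}(t)$ are both centered (by the subtraction of their means), we have $\E{(\ol\chi_{\kappa,n}(t)-\ol\chi_{\nu,n}(t))^2} = \V(\ol\chi_{\kappa,n}(t)-\ol\chi_{\nu,n}(t))$. Applying the square root to~\eqref{E:ApproximationEuler0} then yields the displayed conclusion, with the supremum over $t$ preserved because the constant $C_{0,\kappa}$ is uniform in both $t$ and $\nu\in B_\infty(\kappa,\rho)$. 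Thus the entire content lies in proving the variance estimate.

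The first and most important step is to construct an explicit \emph{coupling} of the two sampling schemes so that $\ol\chi_{\kappa,n}(t)-\ol\chi_{\nu,n}(t)$ is small whenever $\|\kappa-\nu\|_\infty$ is small. For the Poisson case, I would use a maximal (Poisson thinning) coupling on $[0,1]^d$: realize both $\cP_n$ and $\cQ_n$ from a common dominating Poisson process of intensity $n\max(\kappa,\nu)$, keeping each point for $\cP_n$ with probability $\kappa/\max(\kappa,\nu)$ and for $\cQ_n$ with probability $\nu/\max(\kappa,\nu)$, with the retention decisions coupled so that a point is discarded by at most one of the two processes. Under this coupling the symmetric difference $\cP_n \symdiff \cQ_n$ is itself a Poisson process whose intensity is $n|\kappa-\nu| \le n\rho$ in total mass but, crucially, is pointwise bounded by $n\|\kappa-\nu\|_\infty$ per unit volume, so the expected number of discrepant points in any fixed cube is controlled by $\|\kappa-\nu\|_\infty$. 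For the binomial case I would couple $\mX_n$ and $\mY_n$ coordinatewise, drawing each pair $(X_i,Y_i)$ from a maximal coupling of the densities $\kappa$ and $\nu$, so that $\p(X_i \neq Y_i) \le \tfrac{1}{2}\|\kappa-\nu\|_1 \le \tfrac{1}{2}\|\kappa-\nu\|_\infty$.

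The second step is to bound the variance of the difference using the \emph{local} and \emph{stabilizing} structure of the Euler characteristic. The key observation is that $\chi(\cK_{t,n})$ is an additive-type functional over simplices, so the difference $\chi(\cK_t(n^{1/d}\cP_n)) - \chi(\cK_t(n^{1/d}\cQ_n))$ only receives contributions from simplices incident to a point in the symmetric difference $\cP_n\symdiff\cQ_n$ (together with the points within filtration range $T$ of such a point). I would split the difference into a sum over the discrepant points and their bounded local neighbourhoods, then apply a Poincaré-type or martingale variance inequality adapted to the $\cF_z$-filtration introduced in the excerpt, estimating the effect of each discrepant point by the variance contributed by adding or removing a single point to the complex. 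Because simplex counts are locally determined and the filtration parameter is capped at $T$, the number of simplices affected by a single point has all moments bounded uniformly in $n$ (a consequence of the factorial-moment estimates for Poisson and binomial point processes in the critical regime). Summing over the expected $O(n\|\kappa-\nu\|_\infty)$ discrepant points and dividing by the normalization $n$ (from the $n^{-1/2}$ prefactor, squared) produces the linear-in-$\|\kappa-\nu\|_\infty$ bound, uniformly in $t\in[0,T]$.

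The main obstacle I anticipate is controlling the \emph{covariances} between the local contributions of distinct discrepant points, rather than just their individual variances: a naive bound would only yield a rate of $\|\kappa-\nu\|_\infty^{1/2}$ at the level of the variance (hence $\|\kappa-\nu\|_\infty^{1/4}$ for $d_W$), which is weaker than claimed. To obtain the sharp linear dependence one must exploit that the add-one-point cost of the Euler characteristic is itself a stabilizing functional with exponentially decaying range of dependence, so that cross-terms between discrepant points at lexicographic distance $\gg 1$ decay fast enough to be summable; the martingale decomposition along $\cF_z$ makes these increments orthogonal and thereby eliminates the problematic covariances entirely. Verifying the requisite moment bounds on the difference operators for the \v Cech complex — where circumsphere geometry is more delicate than the pairwise-distance condition of the Vietoris--Rips complex — will require the most careful estimation, and is precisely where the stated uniformity of $C_{0,\kappa}$ over the ball $B_\infty(\kappa,\rho)$ must be checked.
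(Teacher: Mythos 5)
Your proposal is correct and follows essentially the same route as the paper: the same couplings (the paper realizes the Poisson one via the points of a unit-rate process on $\R^d\times[0,\infty)$ lying under the graphs of the two densities, and the binomial one via a coordinatewise maximal coupling), the same martingale-difference decomposition along the lexicographic filtration $\cG_{n,i}$ that renders the increments orthogonal and thereby eliminates the cross-terms you worry about, and the same local accounting in which each increment is nonzero only when a discrepant point lies within range $\delta$ of the current cube or index (probability $O(\|\kappa-\nu\|_\infty)$) and has conditional moments bounded uniformly in $n$, $i$, $t$ and $\nu\in B_\infty(\kappa,\rho)$ (the paper's Lemma~\ref{L:BoundedMomentsCondition}). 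The concluding reduction of $d_W$ to the variance bound via the coupling and Cauchy--Schwarz is likewise exactly how the paper finishes.
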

Note that the choice of $\rho$ in the above result is arbitrary and in particular independent of the density $\kappa$. It is known that the EC tends to a Gaussian process for a Poisson sampling scheme and the Vietoris-Rips filtration, see \cite{thomas2019functional}. We generalize this statement in the following to the \v Cech filtration and the binomial sampling scheme and quantify the convergence. The given rate is asymptotically optimal when compared to the classical result of Berry-Esseen for the normalized empirical mean of iid data which is of order $n^{-1/2}$. The main reasons for this fast rate are the stabilizing properties of the EC, see Proposition~\ref{P:StrongStabilizingProperty}, which correspond to $m$-dependent (and thus nearly iid) observations.
\begin{theorem}[Normal approximation]\label{T:NormalApproximation}
Consider the {\v C}ech or the Vietoris-Rips filtration as well as the Poisson or binomial sampling scheme.
Let $\kappa$ be a bounded density on $[0,1]^d$ and let $t\in (0,T]$. There is a $\rho>0$ and a corresponding $C_{1,\kappa} \in\R_+$, also depending on $t$, such that for all $\nu\in B_\infty(\kappa,\rho)$
\begin{align}\begin{split}\label{E:NormalApproximation}
		&\Biggl \{ d_K\left( \frac{ \ol \chi_{\nu,n}(t) }{ \V ( \ol \chi_{\nu,n}(t)) ^{1/2} } , \cN_{0,1} \right) \Biggl \} \\
		&\vee \Biggl \{  d_W\left( \frac{  \ol \chi_{\nu,n}(t) }{ \V ( \ol \chi_{\nu,n}(t)) ^{1/2} } ,\cN_{0,1} \right) \Biggl \} \le \frac{C_{1,\kappa} }{n^{1/2} }.
		\end{split}
\end{align}
Moreover, there are $C_{2,\kappa}, \wt C_{2,\kappa} \in\R_+$, also depending on $\rho, t$, such that
\begin{align}\label{E:KolmogorovApproximationEuler0}
	 d_K \left(  \ol \chi_{\kappa,n}(t) ,  \ol \chi_{\nu,n}(t) \right) \le \frac{C_{2,\kappa} }{n^{1/2} } +  \wt C_{2,\kappa}  \| \kappa - \nu \|_{\infty}^{1/2} .\end{align}
\end{theorem}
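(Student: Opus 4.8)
The plan is to handle the two displays separately: equation \eqref{E:NormalApproximation} is a genuine quantitative central limit theorem, established by Stein's method together with the stabilization of the Euler characteristic, whereas \eqref{E:KolmogorovApproximationEuler0} then follows from \eqref{E:NormalApproximation} by a triangle inequality, an elementary Gaussian comparison, and the variance estimate of Theorem~\ref{T:ApproximationEuler}.

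For \eqref{E:NormalApproximation} I would first write $\chi(\cK_{t,n}) = \sum_k (-1)^k S_k(\cK_{t,n})$, a finite sum in the \v Cech case, so that $\ol\chi_{\nu,n}(t)$ is a centred, locally computable functional of the point cloud. The structural input is the strong stabilizing property of Proposition~\ref{P:StrongStabilizingProperty}: inserting or deleting a point changes $\chi$ only within a random radius of stabilization with exponentially decaying tails, whence the first- and second-order difference (add-one) operators of $\chi$ have uniformly bounded moments of all orders. In the Poisson regime I would then invoke the second-order Poincar\'e/Stein bounds of Chatterjee~\cite{chatterjee2008new} for $d_W$ and of Lachi{\`e}ze-Rey and Peccati~\cite{lachieze2017new} for $d_K$; both reduce the distance to normality to a ratio of moments of these difference operators over the variance, and stabilization makes the numerator $O(n^{1/2})$. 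Dividing by the normalizer $\V(\ol\chi_{\nu,n}(t))^{1/2}$ and using a uniform positive lower bound $\V(\ol\chi_{\nu,n}(t)) \ge c > 0$, valid for all $\nu\in B_\infty(\kappa,\rho)$ and all $n$, yields the rate $C_{1,\kappa}/n^{1/2}$. The binomial case follows by de-Poissonization: a coupling between $\cP_n$ and $\mX_n$ differs in $O_P(n^{1/2})$ points, and stabilization again controls the resulting change in $\chi$, so the Poisson bound transfers at the same order.

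For \eqref{E:KolmogorovApproximationEuler0}, abbreviate $\sigma_\kappa^2 = \V(\ol\chi_{\kappa,n}(t))$ and $\sigma_\nu^2 = \V(\ol\chi_{\nu,n}(t))$. The triangle inequality gives
\[
		d_K(\ol\chi_{\kappa,n}(t), \ol\chi_{\nu,n}(t)) \le d_K(\ol\chi_{\kappa,n}(t), \cN_{0,\sigma_\kappa^2}) + d_K(\cN_{0,\sigma_\kappa^2}, \cN_{0,\sigma_\nu^2}) + d_K(\cN_{0,\sigma_\nu^2}, \ol\chi_{\nu,n}(t)).
\]
Since $d_K$ is invariant under scaling by a positive constant, the first and third terms equal $d_K(\ol\chi_{\kappa,n}(t)/\sigma_\kappa, \cN_{0,1})$ and $d_K(\ol\chi_{\nu,n}(t)/\sigma_\nu, \cN_{0,1})$, each bounded by $C_{1,\kappa}/n^{1/2}$ via \eqref{E:NormalApproximation}; together they supply the term $C_{2,\kappa}/n^{1/2}$. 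For the middle term I would use the elementary Gaussian comparison $d_K(\cN_{0,\sigma_\kappa^2},\cN_{0,\sigma_\nu^2}) = \sup_u |\Phi(u/\sigma_\kappa) - \Phi(u/\sigma_\nu)| \le C\,|\sigma_\kappa-\sigma_\nu|/(\sigma_\kappa\wedge\sigma_\nu)$. The reverse triangle inequality in $L^2$, together with Theorem~\ref{T:ApproximationEuler}, gives $|\sigma_\kappa - \sigma_\nu| \le \V(\ol\chi_{\kappa,n}(t) - \ol\chi_{\nu,n}(t))^{1/2} \le \sqrt{C_{0,\kappa}}\,\|\kappa-\nu\|_\infty^{1/2}$ under the coupling of that theorem (the left side depends only on marginals), and the uniform lower bound $\sigma_\kappa\wedge\sigma_\nu \ge c$ converts this into the term $\wt C_{2,\kappa}\|\kappa-\nu\|_\infty^{1/2}$.

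The main obstacle is the uniform positive lower bound on $\V(\ol\chi_{\nu,n}(t))$: both the normalization in \eqref{E:NormalApproximation} and the division by $\sigma_\kappa\wedge\sigma_\nu$ in \eqref{E:KolmogorovApproximationEuler0} rest on it, and it must hold uniformly over $\nu\in B_\infty(\kappa,\rho)$ and $n\in\N$. I expect to obtain it from a martingale-difference lower bound exploiting the independence of $\cP$ over the disjoint cubes $Q(z)$ together with non-degeneracy of the local Euler-characteristic increment. A secondary difficulty, specific to the \v Cech filtration, is verifying the stabilization and the attendant moment bounds, since the circumsphere description makes the local geometry more delicate than for the Vietoris-Rips complex.
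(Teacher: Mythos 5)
Your treatment of \eqref{E:KolmogorovApproximationEuler0} coincides with the paper's: the same three-term triangle inequality through the Gaussian laws, the same scale invariance of $d_K$, an equivalent Gaussian comparison lemma, and the reverse triangle inequality in $L^2$ combined with Theorem~\ref{T:ApproximationEuler}. For \eqref{E:NormalApproximation} in the Poisson scheme you also follow the paper's route (Chatterjee for $d_W$, Lachi{\`e}ze-Rey--Peccati for $d_K$, moment bounds from stabilization). Your binomial step, however, is a genuine gap. Under any coupling of $\cP_n$ and $\mX_n$ the two point clouds differ in order $n^{1/2}$ points; since each point insertion or deletion changes $\chi$ by an amount of order one (not $o_P(1)$), the two centred, $n^{-1/2}$-normalized Euler characteristics differ by $O_P(1)$, which does not vanish. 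So the Poisson bound does not ``transfer at the same order'' --- it does not transfer at all. This is structural, not technical: by Theorem~\ref{T:FunctionalCLT} (and Proposition~\ref{P:MVN}) the limiting variance in the binomial scheme carries the extra term $-\E{\alpha(\kappa(Z)^{1/d}t)}^2$, absent in the Poisson scheme, so the two normalized statistics have different Gaussian limits and cannot be asymptotically equivalent under any coupling. The paper avoids de-Poissonization entirely: it runs the same Chatterjee/Lachi{\`e}ze-Rey--Peccati bounds directly in the binomial scheme, with difference operators built by replacing $X_i$ by an independent copy $X_i'$, and bounds the variance of the Stein term $S$ there by observing that the covariance of two such differences vanishes unless the corresponding point pairs lie within distance $\delta$, an event of probability $O(n^{-1})$.

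Second, the ``main obstacle'' you flag --- the uniform lower bound $\inf_{\nu\in B_\infty(\kappa,\rho)}\V(\ol\chi_{\nu,n}(t))\ge c>0$ --- is left unproved, and the route you sketch (a direct martingale-difference lower bound for each $\nu$) is not needed. The paper's argument exploits that $\rho$ is at your disposal (the theorem asserts ``there is a $\rho>0$''): Proposition~\ref{P:Positivity}, resting on Penrose--Yukich, gives $\liminf_n \V(\ol\chi_{\kappa,n}(t))^{1/2}\ge c_1>0$ at $\kappa$ itself, while Theorem~\ref{T:ApproximationEuler} and the reverse triangle inequality give $\sup_n\big|\V(\ol\chi_{\kappa,n}(t))^{1/2}-\V(\ol\chi_{\nu,n}(t))^{1/2}\big|\le c_2\|\kappa-\nu\|_\infty^{1/2}\le c_2\rho^{1/2}$, whence $\liminf_n\inf_{\nu}\V(\ol\chi_{\nu,n}(t))^{1/2}\ge c_1-c_2\rho^{1/2}>0$ once $\rho<(c_1/c_2)^2$. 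You already invoke Theorem~\ref{T:ApproximationEuler} in your second part, so this closes your obstacle with tools you have; without it (or a worked-out substitute), and without a correct binomial argument, both displays of the theorem remain unproved.
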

We detail the limiting covariance structure of the process $(\ol\chi_n(t))_{t\in [0,T]}$ in Theorem~\ref{T:FunctionalCLT} below. In particular, we show that $\lim_{n\to\infty} \V ( \ol \chi_{\kappa,n}(t)) \in\R_+$ for each $t>0$. In order to obtain \eqref{E:KolmogorovApproximationEuler0}, we need to quantify the quotient $\V ( \ol\chi_{\kappa,n}(t) )/ \V ( \ol\chi_{\nu,n}(t))$ and its inverse. Both quotients are meaningful if the limiting variance is bounded away from zero and infinity. 
Note that the normal approximation given above does not immediately extend to a uniform result.

To obtain a FCLT and rates of convergence that consider the entire EC on an interval $[0,T]$, we need an understanding of the continuity properties of the filtration time as a function of the underlying simplex. These depend on the simplicial complex in use  and we highlight this by writing $r_{\cC}(\cdot)$, resp., $r_{\cR}(\cdot)$, for the filtration time of the \v Cech, resp., Vietoris-Rips complex. We also write $r(\cdot)$ to refer to either of them if no distinction is necessary. Assume that $Z_0,Z_1,\ldots, Z_q$ are iid according to a density $\kappa$ on $[0,1]^d$ and let $\{Z_0, Z_1,\ldots,Z_q\}$ denote the $q$-simplex spanned by $Z_0,\ldots, Z_q$. If we use the Vietoris-Rips filtration, we can easily derive
\[
\p( r_{\cR}(\{ Z_0,Z_1,\ldots,Z_q \}) \in (a,b] ) \le \alpha_d \| \kappa \|_\infty  \ q(q+1) \ ( b^d - a^d),
\]
where $\alpha_d$ is the $d$-dimensional Lebesgue measure of the $d$-dimensional ball $B_d(0,1)$, see Lemma~\ref{L:ContinuityVR}.

If we instead use the {\v C}ech filtration, the situation is much more complex because it is no longer sufficient to study pairwise distances only. Instead, the filtration time is influenced by the geometry of the embedding space, $\R^d$, and is determined by the radius of the circumsphere. This radius can be calculated analytically with the result from Coxeter \cite{coxeter1930circumradius} using the Cayley-Menger matrix; we also refer to Le Ca{\"e}r \cite{le2017circumspheres} for more results on the circumsphere of $q+1$ points in $d$-dimensional Euclidean space. We obtain a similar result in Lemma~\ref{L:ContinuityCech},
\begin{align*}
		&\p( r_{\cC}(\{ Z_0,Z_1,\ldots,Z_q \}) \in (a,b] ) \\
		& \le (q+1)^{d+2}\ \max_{1\le m \le (d\wedge q)+1} ( \|\kappa\|_\infty \ \alpha_{d} \ d^{d/2} ) ^{m} \ \cdot \ \int_a^b g^*_d(t) \intd{t},
\end{align*}
for a certain continuous real-valued function $g^*_d$ depending on $d$ only.

With these preparations, we are now able to give the approximation property in the functional Kantorovich-Wasserstein distance $d_W^{\fD}$. Here it is of course necessary that $\sup_{t\in [0,T] } | \ol \chi_{\kappa,n}(t) - \ol \chi_{\nu,n}(t) |$ be measurable, which is true because the EC functional $t\mapsto \chi( \cK_{t,n} )$ is c{\` a}dl{\` a}g.

\begin{theorem}[Functional approximation]\label{T:FunctionalWasserstein}
Let $\kappa$ be a bounded density on $[0,1]^d$ and let $\rho\in\R_+$. Let $\nu\in B_\infty(\kappa,\rho)$. Consider the {\v C}ech or the Vietoris-Rips filtration. Let $[0,T]$ be partitioned into $J$ equidistant intervals of length $T/J$. 

 There are coupled Poisson processes $(\cP_n, \cQ_n)$ with intensities $(n\kappa, n\nu)$, coupled binomial processes $(\mX_n,\mY_n)$ of length $n$ with densities $(\kappa,\nu)$, respectively, and there are constants $C_{3,\kappa}, C_{4,\kappa}  \in \R_+$ depending on $\kappa, T>0$ and $\rho$ but neither on $\nu \in B_\infty(\kappa,\rho)$ , nor on $n$ nor on $J$, such that the following holds:
\begin{align}
\begin{split}\label{E:FunctionalApproximationEuler0}
		 &\E{ \sup_{t\in [0,T] } \Big| \ol \chi_{\kappa,n}(t) - \ol \chi_{\nu,n}(t) \Big|^2 } \\
		 & \le C_{3,\kappa } (J + T) \| \kappa - \nu \|_{\infty} + C_{4,\kappa } T^2 \ n \ J^{-1} \ \| \kappa - \nu \|_{\infty}^2.
		 \end{split}
\end{align}
In particular, there are constants $C_{5,\kappa}, C_{6,\kappa} \in \R_+$ depending on $\kappa, T>0$ and $\rho$, but neither on $\nu \in B_\infty(\kappa,\rho)$, nor on $n$ nor on $J,$ such that
\begin{align}
\begin{split}\label{E:FunctionalWassersteinApproximationEuler}
		&d^{\fD}_W (  (\ol \chi_{\kappa,n}(t))_{t\in [0,T]} ,  (\ol \chi_{\nu,n}(t))_{t\in [0,T]}  ) \\
		&\le C_{5,\kappa } \ J^{1/2} \ \| \kappa - \nu \|_{\infty}^{1/2}  +  C_{6,\kappa } \sqrt{ \frac{n}{J} } \| \kappa - \nu \|_{\infty}.
		\end{split}
\end{align}
\end{theorem}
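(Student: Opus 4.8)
The plan is to deduce the functional Wasserstein bound \eqref{E:FunctionalWassersteinApproximationEuler} from the $L^2$-estimate \eqref{E:FunctionalApproximationEuler0}, and to prove the latter by a grid/chaining argument over the partition points $t_j = jT/J$, $0\le j\le J$. Throughout I would work with the same (maximal) coupling as in Theorem~\ref{T:ApproximationEuler}, so that the pointwise variance bound \eqref{E:ApproximationEuler0} is available and the two clouds share a common point set, differing only on discrepancy sets $\cD_\kappa$ and $\cD_\nu$. For the reduction, write $X=(\ol\chi_{\kappa,n}(t))_{t\in[0,T]}$ and $Y=(\ol\chi_{\nu,n}(t))_{t\in[0,T]}$ with laws $\mu_1,\mu_2$. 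For any $1$-Lipschitz $f\colon\fD\to\R$ one has $|f(X)-f(Y)|\le\|X-Y\|_\infty=\sup_t|\ol\chi_{\kappa,n}(t)-\ol\chi_{\nu,n}(t)|$, so taking the supremum over $f$ gives $d^{\fD}_W(\mu_1,\mu_2)\le\E{\sup_t|\ol\chi_{\kappa,n}(t)-\ol\chi_{\nu,n}(t)|}$, which by Cauchy--Schwarz is at most the square root of the left-hand side of \eqref{E:FunctionalApproximationEuler0}. Then $\sqrt{a+b}\le\sqrt a+\sqrt b$ yields \eqref{E:FunctionalWassersteinApproximationEuler}, so the entire difficulty lies in \eqref{E:FunctionalApproximationEuler0}.

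\textbf{Grid decomposition.}
Set $D_n(t)=\ol\chi_{\kappa,n}(t)-\ol\chi_{\nu,n}(t)$. First I would split
\[
\sup_{t\in[0,T]}|D_n(t)|\le\max_{0\le j\le J}|D_n(t_j)|+\max_{1\le j\le J}\ \sup_{t\in[t_{j-1},t_j]}\big|D_n(t)-D_n(t_{j-1})\big|,
\]
so that $\sup_t|D_n(t)|^2\le 2\max_j|D_n(t_j)|^2+2\max_j\sup_t|D_n(t)-D_n(t_{j-1})|^2$. Since $\ol\chi$ is centred, $\E{D_n(t_j)}=0$ and $D_n(0)=0$, so bounding $\max_j$ by the sum and invoking \eqref{E:ApproximationEuler0} gives
\[
\E{\max_{0\le j\le J}|D_n(t_j)|^2}\le\sum_{j=1}^J\V\big(D_n(t_j)\big)\le J\,C_{0,\kappa}\,\|\kappa-\nu\|_\infty,
\]
which is absorbed into the first term of \eqref{E:FunctionalApproximationEuler0}.

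\textbf{Oscillation term.}
This carries the main content. The key observation is that a simplex $\sigma$ all of whose vertices lie in the common part of the two clouds enters the $\kappa$- and the $\nu$-filtration at the \emph{same} time $r(\sigma)$, since $r(\sigma)$ depends only on the vertex positions; hence it contributes identically to $\chi(\cK_{t,n}^\kappa)$ and $\chi(\cK_{t,n}^\nu)$ and cancels in $D_n$. Only simplices meeting $\cD_\kappa$ or $\cD_\nu$ survive, and by monotonicity of the count in the upper endpoint,
\[
\sup_{t\in[t_{j-1},t_j]}\big|D_n(t)-D_n(t_{j-1})\big|\le n^{-1/2}\big(N_j+\E{N_j}\big),
\]
where $N_j$ counts the simplices of $\cK^\kappa$ touching $\cD_\kappa$ together with those of $\cK^\nu$ touching $\cD_\nu$ whose filtration time lies in $(t_{j-1},t_j]$, and $\E{N_j}$ absorbs the centring. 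Bounding $\max_j$ by the sum leaves $n^{-1}\sum_{j=1}^J\E{(N_j+\E{N_j})^2}=n^{-1}\sum_j(\E{N_j^2}+3\E{N_j}^2)$. Using the multivariate Mecke formula in the Poisson case, the discrepancy intensity $n(\kappa-\nu)_+\le n\|\kappa-\nu\|_\infty$, and the continuity Lemmas~\ref{L:ContinuityVR}/\ref{L:ContinuityCech} (which bound $\p(r(\sigma)\in(t_{j-1},t_j])$ by an integral of order $T/J$), I expect $\E{N_j}\le c_{\kappa,T}\,nJ^{-1}\|\kappa-\nu\|_\infty$ and, splitting pairs of simplices into vertex-sharing (diagonal, order $\E{N_j}$) and disjoint (order $\E{N_j}^2$) ones, $\E{N_j^2}\le c_{\kappa,T}\big(\E{N_j}+\E{N_j}^2\big)$. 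Summing over the $J$ intervals and dividing by $n$ then gives $n^{-1}\sum_j\E{N_j}\lesssim\|\kappa-\nu\|_\infty$ and $n^{-1}\sum_j\E{N_j}^2\lesssim T^2 nJ^{-1}\|\kappa-\nu\|_\infty^2$, reproducing exactly the two terms of \eqref{E:FunctionalApproximationEuler0} (the $T$-dependence entering through the continuity constants).

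\textbf{Main obstacle.}
The hard part is the uniform second-moment estimate $\E{N_j^2}\lesssim\E{N_j}+\E{N_j}^2$ with a constant independent of $j$, $J$ and $n$. For the {\v C}ech filtration this rests on Lemma~\ref{L:ContinuityCech} and on the summability of the contributions over the simplex dimension $q$, where the polynomial factor $(q+1)^{d+2}\|\kappa\|_\infty^{m}$ must be dominated by the factorial gain from Mecke's formula; for the binomial scheme the absence of independence forces either a direct combinatorial count over the pairwise coupling of $(\mX_n,\mY_n)$ or a de-Poissonisation, while the common-simplex cancellation and the grid decomposition remain unchanged.
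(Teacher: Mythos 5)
Your proposal is correct and its architecture coincides with the paper's proof: the same reduction of $d_W^{\fD}$ to the $L^2$-bound via the coupling, Lipschitz property and $\sqrt{a+b}\le\sqrt a+\sqrt b$; the same grid decomposition in which the factor $J$ arises from bounding the maximum over grid points by a sum and invoking Theorem~\ref{T:ApproximationEuler}; the same key observation that simplices whose vertices lie in the common part of the coupled clouds have identical filtration times and cancel, so that only discrepancy simplices with $r(\sigma)$ in the window $(t_{j-1},t_j]$ survive; and the same use of Lemmas~\ref{L:ContinuityVR} and \ref{L:ContinuityCech} to extract the $T/J$ factor per window, with the linear-in-$\E{N_j}$ part producing $C_{3,\kappa}(J+T)\|\kappa-\nu\|_\infty$ and the $\E{N_j}^2$ part producing $C_{4,\kappa}T^2nJ^{-1}\|\kappa-\nu\|_\infty^2$. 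The one genuine deviation is how the variance-type portion of the oscillation term is controlled: the paper decomposes the centered discrepancy count into martingale differences with respect to the lexicographic filtration $\cG_{n,j}$ (see \eqref{E:FunctionEulerApproximation5}--\eqref{boundE:FunctionEulerApproximation5}) and bounds local second moments cube by cube, whereas you compute $\E{N_j^2}$ directly via the Mecke formula, splitting pairs of simplices into vertex-sharing (contributing order $\E{N_j}$) and disjoint (contributing order $\E{N_j}^2$) ones. Both routes reduce to the identical local computations (conditioning on neighborhood point counts, the continuity lemmas, and the Poisson/binomial moment bounds of Lemma~\ref{L:MomentsEuler}), so the difference is mostly one of bookkeeping; what the paper's MDS route buys is that it treats the Poisson and binomial schemes with one uniform filtration-based argument, while your Mecke route is cleaner in the Poisson case but, as you correctly flag, forces either de-Poissonisation or a separate combinatorial count over the pairwise coupling $(\mX_n,\mY_n)$ in the binomial case — which is in fact what the paper's binomial branch amounts to.
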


Obviously, the result in \eqref{E:FunctionalWassersteinApproximationEuler} is also valid for general (uncoupled) Poisson processes $\cP_n,\cQ_n$ with intensity functions $n\kappa$, $n\nu$, and general $n$-binomial processes $\mX_n,\mY_n$ with density functions $\kappa,\nu$, respectively.

Moreover, using the continuity properties of the \v Cech filtration, we now extend the findings of \cite{thomas2019functional} who provide a functional central limit theorem for the Vietoris-Rips complex and a Poisson sampling scheme. We remark that a functional central limit theorem for the binomial sampling scheme has not been established yet for either filtration type and follows from a Poissonization argument covered in the technical details of Section~\ref{Section_TechnicalResults}. 

Using the strong stabilizing property of the EC from Proposition~\ref{P:StrongStabilizingProperty}, the following limits exist for each $t\in\R_+$ and $z\in\Z^d$ and can be expressed in terms of a finite and deterministic radius of stabilization
\begin{align}
	\begin{split}\label{D:DeltaInf}
	\Delta_\infty(t) &\coloneqq \lim_{n\to\infty} \chi( \cK_t( ( \cP \cup \{0\}) \cap B_n ) ) - \chi( \cK_t(  \cP \cap B_n )  ) \\
	&\ = \chi( \cK_t( ( \cP \cup \{0\}) \cap B(0,2t) ) ) - \chi( \cK_t(  \cP \cap B(0,2t) )  )  \quad a.s. 
	\end{split} \\
	 \fD_\infty(t,z) &\coloneqq \lim_{n\to\infty} \chi( \cK_t( \cP \cap B_n )) \nonumber \\
	 &\qquad\qquad - \chi( \cK_t(  [ (\cP \cap B_n) \setminus Q(z ) ] \cup [ \cP' \cap B_n \cap Q(z ) ] )  ) \nonumber \\
	 &=  \chi( \cK_t( \cP \cap B(z,2t+\sqrt{d}) ))  - \chi( \cK_t(  [ (\cP \cap B(z,2t+\sqrt{d}) ) \setminus Q(z ) ] \nonumber \\
	 &\qquad\qquad \cup [ \cP' \cap B(z,2t+\sqrt{d}) \cap Q(z ) ] )  ) \quad a.s. \label{D:DeltaInf2}
\end{align}

We assume the following technical condition for the FCLT. We call a density function {\em blocked} if it has the form $\sum_{i=1}^{m^d} b_i \mathds{1}_{A_i}$, where $m\in\N$, $b_1,\ldots,b_{m^d}\in\R_+$ and the $A_i$ are rectangular sets of the form $\times_{i=1}^d I_{i,j_i}$, where the $(I_{i,j})_{j=1}^m$ partition $[0,1]$ in intervals of length $m^{-1}$. The density function $\kappa$ is bounded on $[0,1]^d$ and we assume that there is a sequence of blocked density functions $(\kappa_n)_{n\in\N}$ each defined on $[0,1]^d$ with the property
\begin{align}\label{E:PropertyKappa}
		\lim_{n\to\infty} \| \kappa_n - \kappa \|_\infty = 0.
\end{align}
For example, if $\kappa$ can be approximated uniformly by continuous density functions, then it can also be approximated uniformly by blocked density functions.

We present the FCLT, which enables us to capture the dynamic topological evolution of Vietoris-Rips and {\v C}ech complex as the filtration time runs through a given interval $[0,T]$.
\begin{theorem}[Functional central limit theorem]\label{T:FunctionalCLT}
Let $T\in \R_+$.
Let the filtration be obtained either from the Vietoris-Rips or the \v Cech complex. Let $\kappa$ satisfy \eqref{E:PropertyKappa}. There is a Gaussian process $\fG=(\fG(t): t\in [0,T] )$ such that, as $n\to \infty$,
\begin{align*}
		( \ol\chi_n(t) : t\in [0,T] ) \to \fG \text{ in distribution in the Skorohod $J_1$-topology.}  
\end{align*}
The covariance structure of $\fG$ depends on the sampling scheme. In the Poisson sampling scheme,
\[
		\E{ \fG(s) \fG(t) } = \E{ \gamma( \kappa(Z)^{1/d} (s,t) )		},
\]
where the random variable $Z$ has density $\kappa$ and 
\begin{align}\label{E:DefinitionGamma}
	\gamma(s,t) =  \E{ \E{ \fD_\infty(s,0) \ | \ \cF_0} \ \E{\fD_\infty(t,0) \ | \ \cF_0}	}  
\end{align}
 for $s,t\in [0,\infty)$ and for $\cF_0$ being the $\sigma$-field generated by $\{ \cP \cap Q(z): z\preceq 0 \}$. Then $\sup_{0\le s,t \le T} \gamma(s,t)<\infty$ by the representation in \eqref{D:DeltaInf2}.
 
 In the binomial sampling scheme
\[
		\E{ \fG(s) \fG(t) } =  \E{ \gamma\big( \kappa(Z)^{1/d} (s,t) \big)		} - \E{\alpha(\kappa(Z)^{1/d}s)}\E{\alpha(\kappa(Z)^{1/d}t)},
\]
where $\alpha(t) = \mathbb{E}[ \Delta_\infty ( t ) ] $. Furthermore, $\E{\fG(t)^2} > 0$ for all $ t\in (0,T]$ for both sampling schemes.

For both the Poisson and the binomial sampling scheme, the process $\fG$ has a continuous modification which is $\beta$-H{\"o}lder continuous for each $\beta\in (0,1/2)$.
\end{theorem}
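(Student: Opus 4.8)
The plan is to establish weak convergence in $\fD=D([0,T])$ by the classical route of proving (i) convergence of the finite-dimensional distributions to a Gaussian law with the stated covariance and (ii) tightness in the Skorohod $J_1$-topology, and then to read off the positivity and the H\"older regularity of the limit from the covariance structure.

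For the finite-dimensional distributions I would fix $0<t_1<\cdots<t_k\le T$ and invoke the Cram\'er--Wold device: any linear combination $\sum_j a_j\,\ol\chi_n(t_j)$ is again a centered and normalized geometric functional built from the same point cloud, and it inherits the strong stabilizing property from Proposition~\ref{P:StrongStabilizingProperty} (a finite linear combination of strongly stabilizing functionals is strongly stabilizing, with the maximal radius of stabilization). Applying the normal-approximation machinery underlying Theorem~\ref{T:NormalApproximation} to this combination then yields asymptotic normality, hence joint Gaussianity of $(\ol\chi_n(t_1),\dots,\ol\chi_n(t_k))$. The limiting covariance is identified through the stabilization representation \eqref{D:DeltaInf2}: decomposing the variance cube-by-cube along the filtration $\cF_z$ and using the stationarity of $\cP$, together with the local scaling that turns a region of density $\kappa(x)$ into a unit-intensity process observed at the rescaled radius $\kappa(x)^{1/d}t$, produces $\E{\gamma(\kappa(Z)^{1/d}(s,t))}$ with $\gamma$ as in \eqref{E:DefinitionGamma}. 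I would carry this out first for a blocked density, where the point cloud is locally homogeneous and the $m$-dependence is transparent, and then pass to a general $\kappa$ satisfying \eqref{E:PropertyKappa} by approximating with $\kappa_n$ and controlling the error uniformly in $t$ through the functional bound \eqref{E:FunctionalApproximationEuler0} of Theorem~\ref{T:FunctionalWasserstein}, which vanishes since $\|\kappa_n-\kappa\|_\infty\to 0$. The binomial covariance, with its extra term $\E{\alpha(\kappa(Z)^{1/d}s)}\E{\alpha(\kappa(Z)^{1/d}t)}$, I would obtain from the Poisson result by the de-Poissonization/coupling argument of Section~\ref{Section_TechnicalResults}: conditioning a Poisson process on its total count removes exactly the rank-one correction generated by the add-one cost $\alpha(t)=\E{\Delta_\infty(t)}$.

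The central difficulty is \emph{tightness}, especially for the \v Cech filtration, which is where \cite{thomas2019functional} stopped. The increment $\chi(\cK_{t,n})-\chi(\cK_{s,n})=\sum_{\sigma:\,r(\sigma)\in(s,t]}(-1)^{\dim\sigma}$ is an alternating count of the simplices whose filtration time falls in the window $(s,t]$, so its fluctuations are governed by the continuity Lemmas~\ref{L:ContinuityVR} and~\ref{L:ContinuityCech}, which bound the probability that a single simplex is born in $(s,t]$ by a constant times $\int_s^t$ of a bounded density, hence by $C\,|t-s|$ for $s,t\le T$. Combined with the local (stabilizing) dependence this gives $\V(\ol\chi_n(t)-\ol\chi_n(s))\le C\,|t-s|$, and, because increments over well-separated windows decouple up to a stabilization remainder, the product bound $\E{|\ol\chi_n(t)-\ol\chi_n(s)|^2\,|\ol\chi_n(u)-\ol\chi_n(t)|^2}\le C\,|u-s|^{2}$ for $s\le t\le u$. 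This is precisely Billingsley's moment criterion for $J_1$-tightness (exponent $2\alpha=2>1$), which accommodates the jumps of the c\`adl\`ag EC process. The \v Cech case is harder only because $r_{\cC}$ is determined by circumradii rather than pairwise distances, so Lemma~\ref{L:ContinuityCech} must replace the elementary Lemma~\ref{L:ContinuityVR}; otherwise the argument is identical, and tightness for the binomial scheme follows by transferring the Poisson bound through the coupling.

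Finally, strict positivity $\E{\fG(t)^2}>0$ for $t\in(0,T]$ reduces to showing that $\E{\fD_\infty(t,0)\mid\cF_0}$ is not almost surely zero, which I would verify by exhibiting a positive-probability local configuration in $Q(0)$ whose resampling changes the number of simplices of some fixed dimension born before time $t$; this is where boundedness of $\kappa$ away from zero on the relevant block enters. The H\"older regularity follows from Kolmogorov's continuity theorem applied to the Gaussian limit: the increment-variance bound $\E{|\fG(t)-\fG(s)|^2}\le C\,|t-s|$ (the limiting form of the tightness estimate) together with Gaussianity yields $\E{|\fG(t)-\fG(s)|^{2p}}\le C_p\,|t-s|^{p}$ for every $p$, so a continuous modification exists that is $\beta$-H\"older for every $\beta<(p-1)/(2p)$; letting $p\to\infty$ gives every $\beta\in(0,1/2)$. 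The main obstacle throughout remains the tightness estimate in the \v Cech case, where the geometric dependence of the filtration time on the embedding space, controlled through Lemma~\ref{L:ContinuityCech}, is essential.
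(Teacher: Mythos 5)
Your overall architecture (finite-dimensional convergence via Cram\'er--Wold plus stabilization, tightness via a fourth-moment criterion, then Kolmogorov--Chentsov for the H\"older modification) matches the paper's, but your tightness step has a genuine gap. You assert the product bound $\E{|\ol\chi_n(t)-\ol\chi_n(s)|^2\,|\ol\chi_n(u)-\ol\chi_n(t)|^2}\le C|u-s|^2$ for \emph{all} $s\le t\le u$ in $[0,T]$. This cannot hold uniformly in $n$: when the windows are shorter than $n^{-1}$, the dominant contribution comes from a \emph{single} local configuration producing simplex births in both windows, and the martingale/stabilization decomposition then yields an extra term of order $|t-s|\,n^{-1}$ (this is exactly the term appearing in the paper's estimate ``$\le C(|s-r||t-s|+|t-s|n^{-1})$'' in the proof of Proposition~\ref{P:Tightness}), which is dominated by $|s-r||t-s|$ only when $|s-r|\ge n^{-1}$. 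The paper's proof therefore contains a preliminary step that your argument is missing: Part 1 of Proposition~\ref{P:Tightness} splits $\chi$ into the \emph{monotone} processes $\Sigma_{1,n}$, $\Sigma_{2,n}$ of even- and odd-dimensional simplex counts, uses their monotonicity together with $\E{\Sigma_{i,n}(t)}-\E{\Sigma_{i,n}(s)}\le C\,b_n'\,|t-s|$ to reduce the modulus of continuity to a grid $\Gamma_n$ of mesh $T/n$ (at the cost of an error $C/\sqrt{n}$), and only then verifies the Bickel--Wichura moment condition on the grid, where interval lengths are bounded below by $n^{-1}$. Without the monotone splitting and the grid reduction, the moment criterion you invoke is simply false at fine scales, for both filtrations --- this, rather than the circumradius geometry alone, is the technical heart of the tightness proof.

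Two smaller points. In the covariance identification you propose to control the blocked-density approximation ``through \eqref{E:FunctionalApproximationEuler0}, which vanishes since $\|\kappa_n-\kappa\|_\infty\to0$''; that bound contains the term $C_{4,\kappa}T^2\,n\,J^{-1}\|\kappa-\nu\|_\infty^2$ and does \emph{not} vanish for a fixed approximating density as $n\to\infty$, so you should instead use the pointwise bound \eqref{E:ApproximationEuler0} of Theorem~\ref{T:ApproximationEuler} (which already carries a supremum over $t$), exactly as the paper does in Step (iii) of the proof of Proposition~\ref{P:MVN}; since only finitely many time points are involved in the finite-dimensional distributions, this suffices. Also, for positivity of the limit variance the paper does not require $\kappa$ to be bounded away from zero on any block: Proposition~\ref{P:Positivity} invokes the non-degeneracy of $\Delta_\infty(t)$ via Penrose--Yukich in the homogeneous case and then integrates $\gamma(\kappa(x)^{1/d}(t,t))-\alpha(\kappa(x)^{1/d}t)^2>0$ against $\kappa(x)\,\mathrm{d}x$; your explicit-configuration argument is workable, but the extra assumption you impose is unnecessary.
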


An immediate consequence of the functional central limit theorem is the weak convergence of continuous functionals applied to the EC curve. Let $(S,d_S)$ be a metric space and let $\fJ \colon D([0,T]) \to (S,d_S)$ be continuous (w.r.t.\ to $d_S$ and the $J_1$-topology). Then, under the assumptions of the above theorem, $( \fJ( \ol\chi_n(t)): t\in [0,T] )$ converges weakly to $\fJ(\fG)$ as $n\to\infty$.

As an example, consider the smooth EC-transform, which is the image of $\ol \chi_n$ under the continuous integration mapping
\[
	\fI \colon D([0,T]) \to (C([0,T]), \|\cdot \|_\infty),\quad f\mapsto \int_0^\bullet f(s) \ \intd{s}. 
\]
Crawford et al. \cite{crawford2016functional} consider a similar transform of the EC curve with practical applications in functional data analysis. Further potential applications of the smooth EC-transform $\fI(\ol\chi_n)$ are goodness-of-fit tests as an exploratory tool in topological data analysis. We refer to \cite{svane} and \cite{krebs2020cylinder} for similar applications in the context of persistent Betti numbers.

\subsection{The bootstrap}
\label{section::bootstrap}

Our bootstrap procedure merely requires an estimate for the true density function $\kappa$ of the random variables $X_i$ underlying the Poisson or binomial process. Denote this estimate by $\hat\kappa_n$, where the index $n$ refers to the sample $\cP_n$, resp.\ $\mX_n$. So when considering $\cP_n$, we assume implicitely the knowledge of the Poisson parameter of $N_n$, which is $n$. For instance, $\hat\kappa_n$ can be obtained from a kernel density estimate, see \cite{mack1982weak} and \cite{hansen2008uniform}.

The bootstrap procedure works as follows: Conditional on the sample $\cP_n$ or $\mX_n$ and the density estimate $\hat\kappa_n$, we resample a Poisson process $\cP^*_n = \{ X^*_1,\ldots, X^*_{N^*_n} \}$ or a binomial process $\mX^*_n = \{ X^*_1 , \ldots, X^*_n \} $, where the $X^*_i$ are iid with density $\hat\kappa_n$ and the random variable $N^*_n$ is independent (of all other random variables) and Poisson distributed with mean $n$.

Using the sample $\cP^*_n$ or $\mX^*_n$, we compute the EC of the corresponding {\v C}ech or Vietoris-Rips complex $\cK^*_t$, which is either equal to $\cK_t(n^{1/d}  \cP^*_n)$ or to $\cK_t(n^{1/d} \mX^*_n)$, $t\in [0,T]$. The related empirical process is
\begin{align}
		\ol \chi^*_n(t) = n^{-1/2} \big( \chi( \cK^*_t ) - \Ec{ \chi( \cK^*_t )} \big), \quad t\in [0,T],
\end{align}
where $\mathbb{E}^*$ denotes the expectation conditional on the sample $\cP_n$ or $\mX_n$, respectively.
In practice we use a kernel estimate $\hat\kappa_n$; this smooth bootstrap is proposed in \cite{roycraft2020bootstrapping}. In that contribution we also address in detail possible problems with the ``standard'' bootstrap from the empirical distribution, which we sketch in the following. Hence, the present approach is an alternative, even though estimation of the true underlying density $\kappa$ can be challenging, especially in high dimensions. 

When compared to the direct bootstrap from the empirical distribution, our smooth bootstrap procedure has certain advantages. As the empirical distribution is discrete, the number of unique values in a given bootstrap sample is random and strictly smaller than $n$, with an expected number of points approximately $0.632n$. This can be problematic because in the critical regime, we rescale according to sample size by a factor of $n^{1/d}$. 

Moreover, since the support of the empirical distribution is discrete, the developed asymptotic theory does not apply, requiring at least an underlying distribution with a density. As such, there is a need for a smooth bootstrap procedure; we refer to \cite{roycraft2020bootstrapping} for a more thorough discussion with examples. Our first result applies to the EC evaluated at a specific point $t$.

\begin{theorem}[Pointwise validity of the bootstrap]\label{T:PointwiseValidity}
Let $\kappa$ be a bounded density on $[0,1]^d$ and let $(\hat{\kappa}_n: n\in \N)$ be a sequence of density estimators of $\kappa$ with the property that $\lim_{n\to\infty} \|\kappa - \hat\kappa_n \|_\infty =0$ a.s. (in probability). Then
\[
	 \|\hat\kappa_n - \kappa \|_\infty^{-1/2} \ \cdot \  \sup_{t\in [0,T]} d_W ( \ol \chi^*_n (t),  \ol \chi_n (t) ) = O(1) \quad a.s. \text{ (in probability)}.
\]
Furthermore, for each $t\in [0,T]$
\[
	\big\{ \|\hat\kappa_n - \kappa \|_\infty^{1/2} +  n^{-1/2} \big\}^{-1}  \ \cdot \ d_K ( \ol \chi^*_n (t),  \ol \chi_n (t) ) = O(1)  \quad a.s. \text{ (in probability)}.
\]
\end{theorem}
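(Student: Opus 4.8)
The plan is to reduce both claims to the deterministic comparison bounds of Theorems~\ref{T:ApproximationEuler} and \ref{T:NormalApproximation} by conditioning on the original data. The key observation is that, conditionally on $\cP_n$ (resp.\ $\mX_n$) together with the estimate $\hat\kappa_n$, the resampled cloud $\cP^*_n$ (resp.\ $\mX^*_n$) is exactly a Poisson process of intensity $n\hat\kappa_n$ (resp.\ a binomial process of length $n$ with density $\hat\kappa_n$), and $\Ec{\,\cdot\,}$ is its expectation. Hence the conditional law of $\ol\chi^*_n(t)$ coincides with the law of $\ol\chi_{\hat\kappa_n,n}(t)$ obtained by freezing $\hat\kappa_n$ as a deterministic density, and so the (random) distances $d_W(\ol\chi^*_n(t),\ol\chi_n(t))$ and $d_K(\ol\chi^*_n(t),\ol\chi_n(t))$ equal $d_W(\ol\chi_{\hat\kappa_n,n}(t),\ol\chi_{\kappa,n}(t))$ and $d_K(\ol\chi_{\hat\kappa_n,n}(t),\ol\chi_{\kappa,n}(t))$, respectively. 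To these I can apply the earlier theorems with $\nu=\hat\kappa_n$, provided $\hat\kappa_n\in B_\infty(\kappa,\rho)$.

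For the Wasserstein statement I would fix an arbitrary $\rho\in\R_+$ and take the corresponding constant $C_{0,\kappa}$ from Theorem~\ref{T:ApproximationEuler}. On the event $\{\hat\kappa_n\in B_\infty(\kappa,\rho)\}$ the in-particular bound of that theorem, applied conditionally with $\nu=\hat\kappa_n$, yields $\sup_{t\in[0,T]} d_W(\ol\chi^*_n(t),\ol\chi_n(t))\le\sqrt{C_{0,\kappa}}\,\|\hat\kappa_n-\kappa\|_\infty^{1/2}$, so that the normalized quantity is at most $\sqrt{C_{0,\kappa}}$ there. Because $\|\hat\kappa_n-\kappa\|_\infty\to0$ a.s.\ (resp.\ in probability), this event holds for all large $n$ almost surely (resp.\ with probability tending to one), and since the bound does not depend on $n$ this gives the desired $O(1)$ in the respective mode.

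For the Kolmogorov statement I would instead use the $\rho>0$ and the constants $C_{2,\kappa},\wt C_{2,\kappa}$ supplied by Theorem~\ref{T:NormalApproximation}. On $\{\hat\kappa_n\in B_\infty(\kappa,\rho)\}$, estimate \eqref{E:KolmogorovApproximationEuler0} applied conditionally with $\nu=\hat\kappa_n$ gives, for each fixed $t$, the bound $d_K(\ol\chi^*_n(t),\ol\chi_n(t))\le C_{2,\kappa}\,n^{-1/2}+\wt C_{2,\kappa}\,\|\hat\kappa_n-\kappa\|_\infty^{1/2}$. Dividing by $\|\hat\kappa_n-\kappa\|_\infty^{1/2}+n^{-1/2}$ and invoking the elementary inequality $(ax+by)/(x+y)\le a\vee b$ for $x,y\ge0$, the left-hand side is bounded by $C_{2,\kappa}\vee\wt C_{2,\kappa}$ on that event, and the same ``eventually in the ball'' argument as above delivers the claimed $O(1)$ a.s.\ (resp.\ in probability).

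The main obstacle is the passage from a deterministic comparison density $\nu$ to the data-dependent estimator $\hat\kappa_n$. This is exactly where the uniformity of the constants over the ball $B_\infty(\kappa,\rho)$---emphasized in both Theorems~\ref{T:ApproximationEuler} and \ref{T:NormalApproximation}---is indispensable, since it lets me apply the bounds pathwise with $\nu=\hat\kappa_n$ without the constants depending on the random realization. The remaining points are routine: checking that $\hat\kappa_n$ may be taken to be a genuine density on $[0,1]^d$ so that the resampling and the cited bounds are legitimate, verifying measurability of the conditional distances in the data (and that the finitely many initial terms are a.s.\ finite), and splitting into the two regimes of the ``eventually in the ball'' argument to convert the deterministic tail bound into the a.s.\ and the in-probability conclusions.
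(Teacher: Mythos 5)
Your proposal is correct and follows essentially the same route as the paper: the paper's (very terse) proof likewise notes that uniform consistency puts $\hat\kappa_n$ into $B_\infty(\kappa,\rho)$ for all large $n$ (a.s.\ or in probability) and then applies \eqref{E:ApproximationEuler0} from Theorem~\ref{T:ApproximationEuler} and \eqref{E:KolmogorovApproximationEuler0} from Theorem~\ref{T:NormalApproximation} with $\nu=\hat\kappa_n$, relying on the uniformity of the constants over the ball. Your write-up simply makes explicit the conditioning step and the pathwise application of those bounds, which is exactly what the paper leaves implicit.
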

Consider the case of $n$ iid data points $Z_i$, where the density $\kappa$ has a continuous $p$\textsuperscript{th} derivative on $[0,1]^d$ and where the kernel density estimate $\hat\kappa_n$ is obtained from a $p$\textsuperscript{th} order kernel for an integer $p\ge 1$ (see \cite{tsybakov2008introduction} for the definition of the order of a kernel). In this case,
\begin{align}\label{E:KernelRate}
		 \|\hat\kappa_n - \kappa \|_\infty = O\big( (	n^{-1} \log n )^{p/(d+2p)} \big) \quad a.s.,
\end{align}
see e.g. \cite{hansen2008uniform}. Hence, for the Kantorovich-Wasserstein distance
\[
			 \sup_{t\in [0,T]} d_W ( \ol \chi^*_n (t),  \ol \chi_n (t) ) = O\big( (	n^{-1} \log n )^{p/(2d+4p)} \big) \quad a.s. 
\]
A similar result is true for the Kolmogorov distance for fixed $t$ (and not uniformly in $t\in [0,T]$), viz.,
\[
		 d_K ( \ol \chi^*_n (t),  \ol \chi_n (t) )= O\big( (	n^{-1} \log n )^{p/(2d+4p)} \big) \quad a.s.
\]
for each $t\in [0,T]$. Moreover, we have the following functional result.
\begin{theorem}[Functional validity of the smooth bootstrap]\label{T:Validity}
Let the assumptions of Theorem~\ref{T:PointwiseValidity} be satisfied. Additionally, let $(J_n:n\in\N)$ diverge to infinity such that   
\[
    	J_n \|\hat\kappa_n - \kappa\|_\infty \to 0\;\; a.s.\; (in\; probability)
	\]
and
\[
	\sqrt{ \frac{n}{J_n} }   \| \hat\kappa_n - \kappa \|_{\infty} \to 0 \quad a.s. \ \text{ (in probability)}
\]
as $n\to\infty$. Set $b_n = J_n^{1/2} \ \| \hat\kappa_n - \kappa \|_{\infty}^{1/2}  +  \sqrt{ n \ J_n^{-1} }   \| \hat\kappa_n - \kappa \|_{\infty} $. Then 
\begin{align*}
		&  b_n^{-1} \ \cdot \ d^{\fD}_W (  (\ol \chi^*_{n}(t))_{t\in [0,T]}, (\ol \chi_{n}(t))_{t\in [0,T]}  ) = O(1) \quad  a.s. \text{ (in probability)}.
\end{align*}
\end{theorem}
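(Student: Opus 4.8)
The plan is to read off the functional bootstrap bound directly from the deterministic functional approximation in Theorem~\ref{T:FunctionalWasserstein}, applied conditionally on the data with the random density estimate $\hat\kappa_n$ playing the role of the perturbed density $\nu$. The first step is to make precise the identification of the bootstrap process with the empirical process driven by $\hat\kappa_n$: conditionally on the sample ($\cP_n$ or $\mX_n$) and on $\hat\kappa_n$, the resampled points $X^*_i$ are i.i.d.\ with density $\hat\kappa_n$ and the count $N^*_n$ is Poisson with mean $n$, so the conditional law of $(\ol\chi^*_n(t))_{t\in[0,T]}$ coincides with the law of $(\ol\chi_{\hat\kappa_n,n}(t))_{t\in[0,T]}$; in particular the random centering $\Ec{\chi(\cK^*_t)}$ equals the expectation of $\chi(\cK_{t,n})$ computed under density $\hat\kappa_n$. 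Since $d^{\fD}_W$ is a distance between laws (equivalently an infimum over couplings, by Kantorovich duality), the coupling exhibited in Theorem~\ref{T:FunctionalWasserstein}---which supplies both a Poisson and a binomial construction---furnishes a valid upper bound for the conditional distance $d^{\fD}_W(\ol\chi^*_n,\ol\chi_n)$ that depends on the realization only through $\|\kappa-\hat\kappa_n\|_\infty$ and the chosen $J=J_n$.

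Second, I would fix an arbitrary $\rho>0$ (permissible, since the radius in Theorem~\ref{T:FunctionalWasserstein} is free) and introduce the event $E_n=\{\|\kappa-\hat\kappa_n\|_\infty\le\rho\}$. On $E_n$ the estimate satisfies $\hat\kappa_n\in B_\infty(\kappa,\rho)$, so Theorem~\ref{T:FunctionalWasserstein} applies with $\nu=\hat\kappa_n$ and $J=J_n$, giving
\[
    d^{\fD}_W\big((\ol\chi^*_n(t))_{t\in[0,T]},(\ol\chi_n(t))_{t\in[0,T]}\big)\le C_{5,\kappa}\,J_n^{1/2}\,\|\kappa-\hat\kappa_n\|_\infty^{1/2}+C_{6,\kappa}\,\sqrt{n/J_n}\,\|\kappa-\hat\kappa_n\|_\infty\le C\,b_n,
\]
with $C=\max(C_{5,\kappa},C_{6,\kappa})$. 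The crucial structural feature is that $C_{5,\kappa},C_{6,\kappa}$ depend on $\kappa,T,\rho$ but not on the particular $\nu$; this is exactly what renders the bound uniform over the random draws of $\hat\kappa_n$, so long as they remain in the fixed ball $B_\infty(\kappa,\rho)$.

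Third, I would transfer the event-wise bound into the two asymptotic modes. By the consistency hypothesis $\|\kappa-\hat\kappa_n\|_\infty\to0$, the indicator $\mathds{1}_{E_n}\to1$ almost surely (respectively $\p(E_n)\to1$). In the almost-sure case, for a.e.\ realization the bound $b_n^{-1}d^{\fD}_W(\ol\chi^*_n,\ol\chi_n)\le C$ holds for all large $n$, giving $O(1)$ a.s.; in the in-probability case, $\p\big(b_n^{-1}d^{\fD}_W(\ol\chi^*_n,\ol\chi_n)>C\big)\le\p(E_n^c)\to0$, giving $O(1)$ in probability. The two divergence conditions on $J_n$ are not strictly needed for the $O(1)$ statement; rather, $J_n^{1/2}\|\hat\kappa_n-\kappa\|_\infty^{1/2}=(J_n\|\hat\kappa_n-\kappa\|_\infty)^{1/2}\to0$ and $\sqrt{n/J_n}\|\hat\kappa_n-\kappa\|_\infty\to0$ force $b_n\to0$, which upgrades the conclusion to genuine consistency of the functional bootstrap.

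The main obstacle I anticipate is the interplay between the randomness of $\hat\kappa_n$ and the deterministic nature of Theorem~\ref{T:FunctionalWasserstein}: one must verify (i) that the conditional bootstrap law is genuinely that of $\ol\chi_{\hat\kappa_n,n}$, including the matching of the random centering $\Ec{\,\cdot\,}$; (ii) that the coupling-based inequality is legitimately an upper bound for the conditional $d^{\fD}_W$; and (iii) that $\omega\mapsto d^{\fD}_W(\ol\chi^*_n,\ol\chi_n)$ is measurable, so that the two limit modes are well posed. Points (ii) and (iii) are routine given the c\`adl\`ag structure and Kantorovich duality, so the only substantive content is the clean conditioning argument of (i) combined with the uniform-in-$\nu$ constants furnished by Theorem~\ref{T:FunctionalWasserstein}.
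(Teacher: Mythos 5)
Your proposal is correct and follows exactly the route of the paper, whose entire proof reads ``an immediate application of Theorem~\ref{T:FunctionalWasserstein}''; your three steps (identifying the conditional bootstrap law with $\ol\chi_{\hat\kappa_n,n}$, applying Theorem~\ref{T:FunctionalWasserstein} with $\nu=\hat\kappa_n$, $J=J_n$ on the event $\{\|\hat\kappa_n-\kappa\|_\infty\le\rho\}$ using the independence* of the constants from $\nu$, and passing to the a.s./in-probability modes via consistency) simply make explicit what the paper leaves implicit. Your observation that the divergence conditions on $J_n$ serve to force $b_n\to 0$ rather than being needed for the $O(1)$ bound is also accurate.
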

If we use a kernel estimator $\hat\kappa_n$ for $\kappa$, we obtain a consistent uniform bootstrap approximation given that the density is sufficiently smooth, in the above sense, with $p>d$. Set $J_n = (\log n)^{p/(2d+4p)} n^{(d+p)/(2d+4p)}$. Using \eqref{E:KernelRate}, 
\[
	d^{\fD}_W (  (\ol \chi^*_{n}(t))_{t\in [0,T]}, (\ol \chi_{n}(t))_{t\in [0,T]}  ) = O( (\log n)^{\alpha} n^{-\beta} ) \quad a.s.,
\]
where $\alpha = 3p/(4d+2p) >0$ and $1/8\ge \beta = 3p/(4d + 8p) -1/4>0$.

\subsection{Simulation study} \label{section::simulation_study}

In this section, we provide the results for a series of simulations using the smoothed bootstrap procedure described in Section~\ref{section::bootstrap}, establishing its efficacy in producing valid uniform confidence bands for the mean Euler characteristic curve of the \v Cech complex. Due to computational constraints, data generating distributions were chosen in dimensions $2$ and $3$ only. A description of the distributions considered is given in Table~\ref{table::distributions}. Visual illustrations are given in Figure~\ref{figure::dist_illustrations} for $F_1$ to $F_4$ in dimension $2$.

\begin{figure}
	\begin{center}
		\includegraphics[width=.22\textwidth]{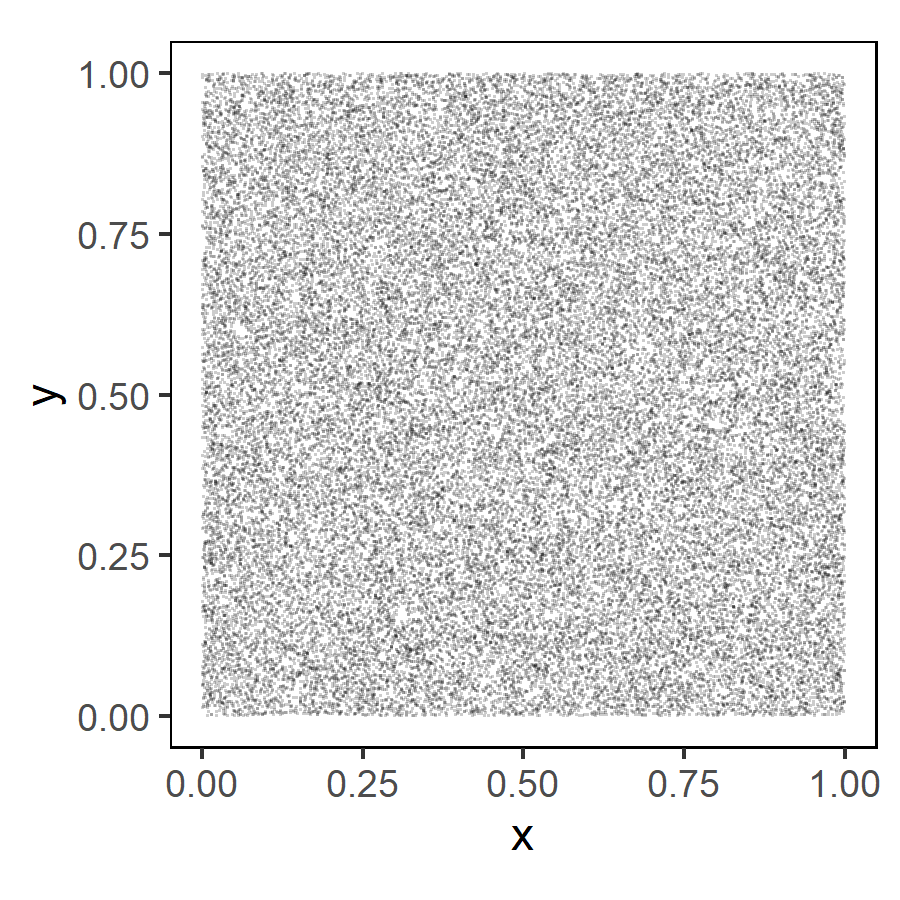}
		\includegraphics[width=.22\textwidth]{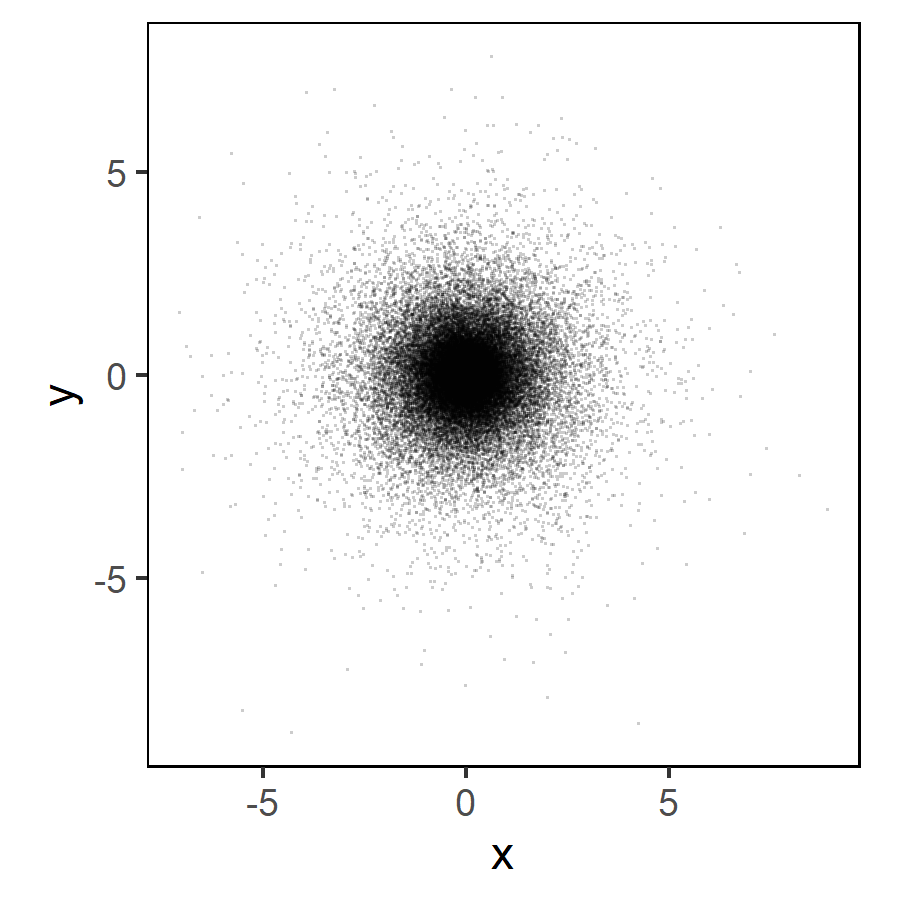}
		\includegraphics[width=.22\textwidth]{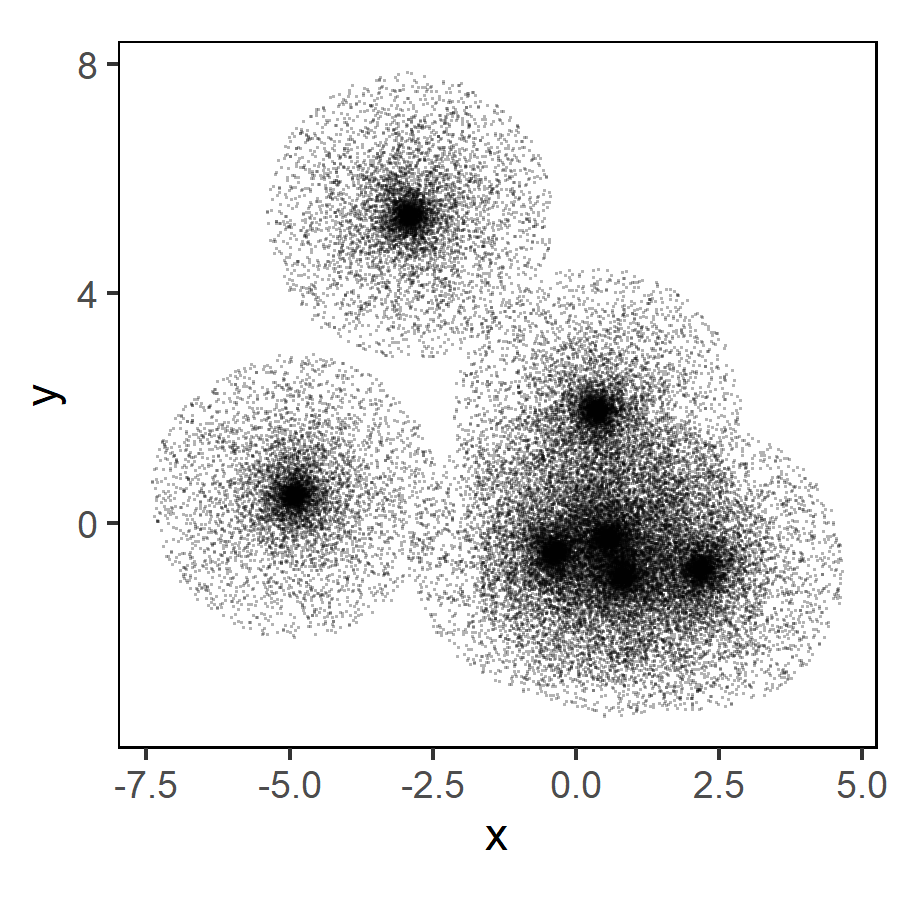}
		\includegraphics[width=.22\textwidth]{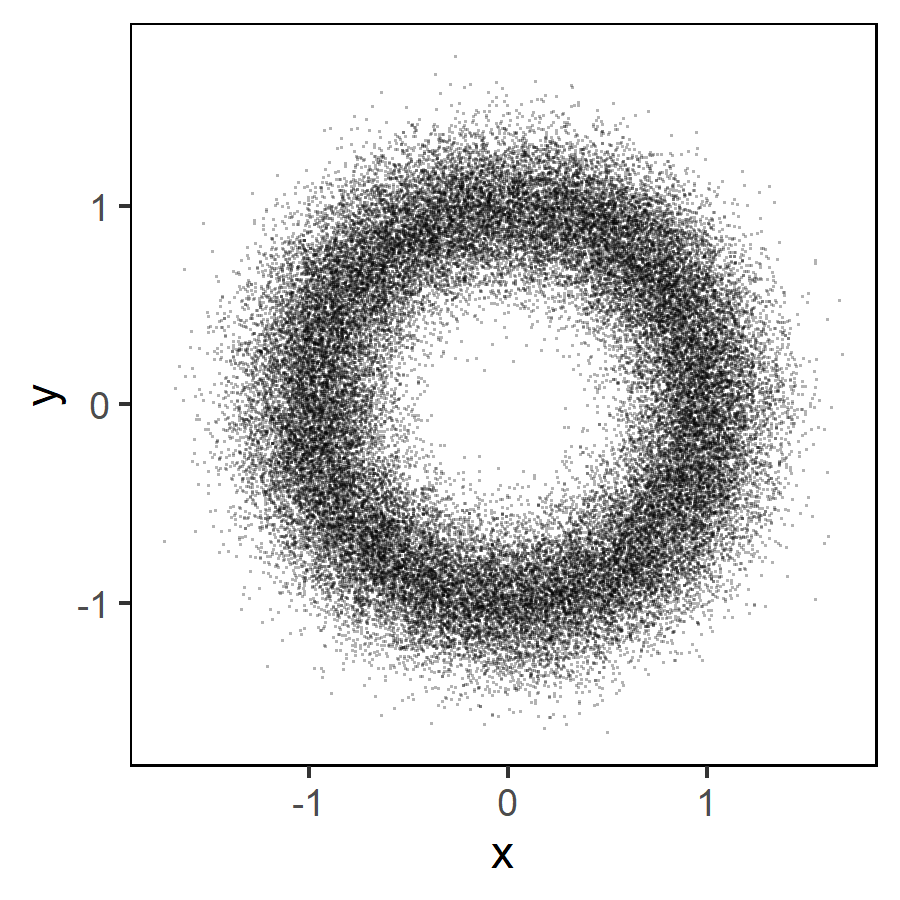}
	\end{center}
	\caption{Large-sample illustrations of distributions $F_1$-$F_4$ from Table~\ref{table::distributions}, shown from left to right, respectively. \label{figure::dist_illustrations}}
\end{figure}

\begin{table}
	\begin{center}
		\begin{tabular}{|c|c|l|}
			\hline
			Distribution & Dimension & Description \\
			\hline
			$F_1$ & 2 & Uniform on $[0, 1]^2$ \\
			$F_2$ & 2 & Rotationally symmetric, Exp$(1)$ radius \\
			$F_3$ & 2 & Density with 7 poles, bounded $L_2$ norm\\
			$F_4$ & 2 & Uniform on $\mathbb S^1$, additive Gaussian noise \\
			$F_5$ & 3 & Uniform on $[0, 1]^3$\\
			$F_6$ & 3 & \begin{minipage}[t]{.4\textwidth} Linked copies of $\mathbb S^1$, \\ uniform sampling with additive Gaussian noise.\vspace{.4em} \end{minipage} \\
			$F_7$ & 3 & \begin{minipage}[t]{.4\textwidth} $\mathbb S^2$ with smaller-radius $\mathbb S^1$ handle (kettlebell), \\ uniform sampling with additive Gaussian noise \end{minipage} \\
			\hline
		\end{tabular}
	\end{center}
	\caption{Distributions considered for the simulation study of Section~\ref{section::simulation_study} \label{table::distributions}}
\end{table}

For a given distribution and sample size, the true mean curve of the Euler characteristic $(\E{\chi(\mathcal K_t(n^{1/d}\mathbb X_n))} : t\in[0, T])$ was estimated using the average over a large ($n_{\mu}=50000$) number of iid replicates from the true distribution. Betti number calculations for the \v Cech complex were done using the GUDHI library via alphaComplexDiag from the TDA R package. Evaluation was done at a dense ($n_t=1000$) grid within $[0, T]$, with the exact value of $T$ changing depending on sample size and distribution. $T$ was chosen large enough as to not influence the analysis. The estimation error included in these steps is considered negligible.

Next, for a given sample size, we generate an original sample $\mathbb X_n$, and $B=1000$ bootstrap replications, using the smoothed bootstrap procedure. Bandwidth selection was done using Hpi.diag from the ks R package. 

The mean curve $(\Ec{\chi(\mathcal K_t(n^{1/d}\mathbb X^*_n))}: t\in[0, T])$ was estimated using the average Euler curve over the $B$ bootstrap replicates, again evaluated at a dense ($n_t=1000$) grid within $[0, T]$. For each bootstrap sample $\mathbb X^*_{n, i}$, we calculate
\[e_i=\sup_{t\in[0, T]}\left|\chi\left(\mathcal K_t(n^{1/d}\mathbb X^*_{n, i})\right)-\frac{1}{B}\sum_{i=1}^B\chi\left(\mathcal K_t(n^{1/d}\mathbb X^*_{n, i})\right)\right|.\]

To establish coverage, the $0.95$ quantile of the $e_1, ..., e_B$ gives the width of the corresponding uniform confidence band, and is compared to 
$$
	\sup_{t\in[0, T]}\left|\chi\left(\mathcal K_t(n^{1/d}\mathbb X_n)\right)-\E{\chi(\mathcal K_t(n^{1/d}\mathbb X_n))}\right|
$$
using the established estimate of the true mean curve. The entire data generation and bootstrap procedure was repeated $n_p=500$ times to estimate the coverage proportion. Coverage proportions and details are provided in Table~\ref{table::coverage}.

\begin{table}[]
	\centering
	\begin{tabular}{|ll|lllll|}
		\hline
		\multicolumn{2}{|l|}{\multirow{2}{*}{}} & \multicolumn{5}{c|}{Sample Size ($n$)} \\
		\multicolumn{2}{|l|}{}                  & $30$   & $50$  & $100$ & $200$ & $500$ \\ \cline{3-7} 
		\multicolumn{1}{|c}{}      & $F_1$      & 0.980  & 0.972 & 0.978 & 0.978 & 0.978 \\
		& $F_2$      & 0.948  & 0.954 & {\color{red} 0.904} & {\color{red} 0.884} & {\color{red} 0.880} \\
		& $F_3$      & 0.978  & 0.962 & 0.966 & 0.978 & 0.974 \\
		Distribution               & $F_4$      & 0.964  & 0.974 & 0.972 & 0.964 & 0.960 \\
		& $F_5$      & {\color{red} 0.914}  & {\color{red} 0.922} & 0.958 & 0.958 & 0.968 \\
		& $F_6$      & 0.980  & 0.982 & 0.990 & 0.968 & 0.956 \\
		& $F_7$      & 0.984  & 0.980 & 0.986 & 0.988 & 0.982 \\ \hline
	\end{tabular}
	\caption{Coverage proportions for the bootstrap simulation study. Red color indicates situations of interest}
	\label{table::coverage}
\end{table}  

We see that the bootstrap procedure is generally conservative, yielding higher than the nominal $95\%$ coverage proportion in the majority of cases, moving towards the nominal level with larger sample size. In the case of $F_5$, the uniform distribution on $[0,1]^3$, the poor coverage for $n=30$ and $n=50$ is likely due to boundary effects not present in the other continuous cases. The stand-out case, however, is $F_2$, which seems to diverge from the stated level for large samples. In this case, the density approaches $\infty$ towards the origin, in such a way that no $L_p$ norm is bounded. This is likely the driving factor behind the poor coverage in this case.

For the results in this work, we consider only the case of a bounded density on $[0,1]$. As shown by the provided coverage proportions, it is likely that these conditions can be greatly weakened, while still providing for bootstrap consistency.

\section{Technical results}\label{Section_TechnicalResults}
Throughout all our proofs, we will use the same terminology and notation. In the following lines, we introduce more definitions which are exclusively needed in this section and in the appendix.

\textit{Convention about the connectivity.} Since we are studying simplicial complexes built from the {\v C}ech and the Vietoris-Rips filtration for filtration parameters in the range $[0,T]$, an upper bound on the diameter of the simplex is $2T$, resp.\ $T$. We abbreviate this upper bound by $\delta$, e.g., we only need to know the points in a $\delta$-neighborhood of a given point $x$ in order to determine the simplices containing $x$.

\textit{Convention about the densities.} Throughout this section and the appendix $\kappa$ is an arbitrary but fixed bounded density on $[0,1]^d$. Moreover, for a given $\rho\in \R_+$, we study density functions $\nu\in B_\infty(\kappa,\rho)$. The choice of the neighborhood parameter $\rho$ can depend on $\kappa$, however, this will then be mentioned. As already pointed out in Section~\ref{Section_Results}, the constants depend then on $\kappa$ and $\rho$ but not on $\nu\in B_\infty(\kappa,\rho)$.

\textit{Convention about constants.} To ease notation, most constants in this paper will be denoted by $c,c',C$, etc. and their values may change from line to line. These constants may depend on parameters like the dimension and often we will not point out this dependence explicitly; however, none of these constants will depend on the index $n$, used to index infinite sequences, or on the index $i$, used to index martingale differences. Furthermore, these constants will not depend on $\nu$ as long as $\nu$ satisfies $\|\nu -\kappa \|_\infty \le \rho$. If we point out this property explicitly, we say "$C$ is independent* of $\nu$". Specific constants carry a subscript $C_1, c_1$ etc.

\textit{Notation in the Poisson sampling scheme.} Let $\cP, \cP'$ be independent Poisson processes on $\R^d\times [0,\infty)$ with unit intensity. We assume the following couplings
\begin{align*}
		&  \cP(n) = \left\{	x \in \R^d: \exists t \le \kappa(x/n^{1/d} + e_d /2 ), (x,t)\in \cP	\right\}, \\
		& \cQ(n) = \left\{	x \in \R^d: \exists t \le \nu(x/n^{1/d} + e_d /2 ), (x,t)\in \cP	\right\}, \\
		&  \cP'(n) = \left\{	x \in \R^d: \exists t \le \kappa(x/n^{1/d} + e_d /2 ), (x,t)\in \cP'	\right\}, \\ 			& \cQ'(n) = \left\{	x \in \R^d: \exists t \le \nu(x/n^{1/d} + e_d /2), (x,t)\in \cP'	\right\}.
\end{align*}
Note that as in Section~\ref{Section_Results} the density $\kappa$ is related to the Poisson processes $\cP(n), \cP'(n)$ whereas the density $\nu$ belongs to $\cQ(n), \cQ'(n)$. The Poisson processes $\cP(n), \cP'(n)$ and $\cQ(n), \cQ'(n)$ are supported on the cube $B_n = [-n^{1/d}/2, n^{1/d}/2 ]^d$ and have intensity functions $\kappa(\cdot/n^{1/d} + e_d /2 )$ and $\nu(\cdot/n^{1/d} + e_d /2 )$, respectively, where here $e_d$ is the all-one vector $(1,\ldots,1) \in \R^d$.

Recall that $\cP_n$ (resp. $\cQ_n$) is a non-homogenous Poisson process with intensity function $n\kappa$ (resp. $n\nu$). Obviously, the distribution of $n^{1/d} \cP_n$ and $\cP(n)$ are equal modulo the shift; the same holds for $n^{1/d} \cQ_n$ and $\cQ(n)$. We write $B'_n = \{ z \in \Z^d: Q(z) \cap B_n \neq \emptyset\}$ and denote the  cardinality of $B'_n$ by ${b_n^\prime}$. We will use an enumeration of $B'_n$ given by $\{ z_{n,i}: i\in [{b_n^\prime}] \}$, where $z_{n,i} \preceq z_{n,i+1}$. Clearly, ${b_n^\prime} / n \to 1$ as $n\to\infty$.

For $A \subseteq [n]$, we write
$$\cP'^A(n) \coloneqq \big\{  \cP(n) \setminus \big( \bigcup_{i\in A} Q(z_{n,i}) \big) \big\} \cup \big\{ \cP'(n) \cap \big( \bigcup_{i \in A} Q(z_{n,i}) \big) \big\}.$$
In slight abuse of notation we write $\cP'^{\,i}(n)$ rather than $\cP'^{\{i\}}(n)$, and we write $\cP'^{\,z}(n)$ when replacing the points in $\cP(n) \cap Q(z)$ by points in $\cP'(n).$ We also use a similar notation with $\cP$ replaced by $\cQ$.

The following filtrations of simplicial complexes will be used to construct martingale differences:
\begin{alignat*}{4}
 &\cK_{t,n}  &&\coloneqq \cK_t( \cP(n) ) , \qquad
		 && \wt \cK_{t,n}  &&\coloneqq \cK_t( \cQ(n) ) ,  \\
		 &\cK'_{t,n,i}  &&\coloneqq \cK_t( \cP'^{\,i}(n) ) , \qquad
		&& \wt \cK'_{t,n,i}  &&\coloneqq \cK_t( \cQ'^{\,i}(n) ),
\end{alignat*}		
for $t\in [0,T]$, $i\in [{b_n^\prime}]$ and $n\in \N$. The next two filtrations are needed for approximation arguments:	 
\begin{align*}
		 & \wt \cK^*_{t,n,i}  \coloneqq \cK_t( [\cQ(n)\setminus Q(z_{n,i})] \cup [\cP(n) \cap Q(z_{n,i})]  ), \\
		   & \wt \cK'^*_{t,n,i} \coloneqq \cK_t( [\cQ(n)\setminus Q(z_{n,i})] \cup [\cP'(n) \cap Q(z_{n,i})]  ),
\end{align*}
for $t\in [0,T]$, $i\in [{b_n^\prime}]$ and $n\in \N$. Notice that filtrations without a ``tilde'' in their notation are based on $\cP(n)$ (and $\cP'(n))$, while those with a ``tilde'' are based on $\cQ(n)$ (and $\cQ'(n)$). The notation ``tilde-star'' indicates filtrations based on $\cQ(n)$ with small parts replaced by points in $\cP(n)$ and  $\cP'(n)$, respectively.

For each $n$ define a filtration of $\sigma$-fields by 
\begin{align}\label{DefSigmaField}
\cG_{n,j} = \sigma \{ \cP(n) \cap Q(z_{n,k}), \cQ(n) \cap Q(z_{n,k}):  z_{n,k} \preceq z_{n,j} \},
\end{align}
for $ j \in [ {b_n^\prime} ]$. Also set  $\cG_{n,0} = \{ \emptyset, \Omega\}$. The following notation is convenient for statements regarding the asymptotic normality. We write
\begin{align*}
			\fD'_n(t,z) = \chi(\cK_t( \cP(n) )) - \chi(\cK_t( \cP'^{\,z}(n) ) ), \\
			\wt \fD'_n(t,z) = \chi(\cK_t( \cQ(n) )) - \chi(\cK_t( \cQ'^{\,z}(n) ) ), 
\end{align*}
for first order differences in a specific point $z\in\Z^d$. Moreover, we use the following notation for first order differences tied to specific indices for $A\subseteq [{b_n^\prime}]$ and $j\in [{b_n^\prime}]$:
\begin{align*}
	\fD'^A_n( t,j ) &\coloneqq \chi( \cK_t( \cP'^A(n)) ) - \chi( \cK_t (\cP'^{A \cup \{j\} }(n) ) ),\\
	\wt \fD'^A_n( t,j ) &\coloneqq \chi( \cK_t( \cQ'^A(n)) ) - \chi( \cK_t (\cQ'^{A \cup \{j\} }(n) ) ).
\end{align*}
If $A=\emptyset$, we omit $A$ in the superscript on the left-hand side, so that $\fD'_n( t,j ) = \fD'_n( t,z_{n,j} ) = \chi(\cK_{t,n}) - \chi(\cK'_{t,n,j}).$

\textit{Notation in the binomial sampling scheme.} In order to ease the notation, we set ${b_n^\prime} :\equiv n$, so that we can treat the binomial and the Poisson sampling scheme with the same notation. We use coupled binomial processes $\mX = (X_i: i\in \N) ,\mX' = (X'_i: i\in \N),\mY = (Y_i: i\in \N),\mY' = (Y'_i: i\in \N )$ instead. These have the property that $(\mX,\mY)$ and $(\mX',\mY')$ are independent and the components of $\mX,\mX'$ and $\mY,\mY'$ have a density $\kappa$ and $\nu$, respectively, such that
\[
		\p( X_i \neq Y_i )  \vee \p( X'_i \neq Y'_i ) =  \frac{1}{2} \| \kappa - \nu \|_{TV} = \frac{1}{4} \|\kappa - \nu \|_1 \le \frac{1}{4} \|\kappa - \nu \|_{\infty} , \qquad \forall i \in \N,
\]
see for instance \cite{den2012probability}, Theorem 2.12. (Later, we will apply this coupling to the case where $\|\kappa-\nu\|_{\infty}$ is small.) 

In what follows, we will use the fact that the binomial processes are defined as sequences and not as point clouds. Define the filtration of $\sigma$-fields $\cG_{n,i} = \sigma\{ X_j, Y_j: j\in [i] \}$ for $i\in [n]$ and $\cG_{n,0} = \{\emptyset, \Omega\}$. Also, write $\mX_n$ for the elements $X_i$ of $\mX$ with $i\in [n]$, and similarly define $\mY_n, \mX'_n$ and $\mY'_n$. Furthermore, let
\[
	\mX'^{A}_{n} \coloneqq  ( \mX_n \setminus \{X_i , i \in A\} ) \cup \{ X'_i , i \in A\}, \quad A\subseteq [n].
\]
We write $\mX'^{\,i}_{n}$ for $\mX'^{\{i\}}_{n}$. A similar notation is used with $\mX_n$ replaced by $\mY_n.$ The following definitions of filtrations of simplicial complexes parallel those of the Poisson case:
\begin{alignat*}{4}
		& \cK_{t,n}  &&\coloneqq \cK_t( n^{1/d} \mX_n ) , \qquad\quad
		&& \wt \cK_{t,n}  &&\coloneqq \cK_t( n^{1/d} \mY_n ) ,  \\
		& \cK'_{t,n,i}  &&\coloneqq \cK_t(n^{1/d} \ \mX'^{\,i}_n,) , 
		&& \wt \cK'_{t,n,i}  &&\coloneqq \cK_t(n^{1/d} (  \mY'^{\,i}_n ) ),
\end{alignat*}
for $t\in [0,T]$, $i\in [n] $ and $n\in\N$, as well as
\begin{align*}
		 & \wt \cK^*_{t,n,i}  \coloneqq \cK_t( n^{1/d} ( [\mY_n \setminus \{Y_i\}] \cup\{X_i\}  ) ), 
		  &  \wt \cK'^*_{t,n,i}  \coloneqq \cK_t( n^{1/d} ([\mY_n \setminus \{Y_i\} ] \cup\{X'_i\}  ) ),
\end{align*}
for $t\in [0,T]$, $i\in [n] $ and $n\in\N$. Compared to the Poisson case, we replace $\cP(n), \cP'(n)$ by $n^{1/d} \mX_n, n^{1/d}\mX'_n$ and $\cQ_n, \cQ'_n$ by $n^{1/d}\mY_n,n^{1/d}\mY'_n$, respectively.  For $A\subseteq [{b_n^\prime}] = [n]$ and $j\in [{b_n^\prime}]$, we set 
\begin{align*}
\fD'^A_n( t,j) =\chi( \cK_t( \mX'^A_n ) ) - \chi( \cK_t ( \mX'^{A\cup \{j\}}_n ) ),\\
\wt\fD'^A_n( t,j) =\chi( \cK_t( \mY'^A_n ) ) - \chi( \cK_t ( \mY'^{A\cup \{j\}}_n ) ).
\end{align*}
Again, if $A=\emptyset$, we omit $A$ in the superscript on the left-hand side. 

Recall that in the Poisson sampling scheme the processes $\ol\chi_{n}$ were defined in Section~\ref{Section_Definitions} from the Poisson processes $n^{1/d}\cP_n$ (resp.\ $n^{1/d}\cQ_n$) and not $\cP(n)$ (resp.\ $\cQ(n)$). However, it is not difficult to see that we can define $\cP_n$ and $\cQ_n$ on the same probability space such that the joint distributions of $( n^{1/d}\cP_n, n^{1/d} \cQ_n)$ and $(\cP(n),\cQ(n))$ are equal (modulo the shift by $e_d/2$). To see this define
\begin{align}\begin{split}\label{E:CouplingAlt}
		&		\cP_n \coloneqq \{ x: (x,t)\in\cP, 0\le t \le n \kappa(x) \} \\
		& \text{ and }
				\cQ_n \coloneqq \{ x: (x,t)\in\cP, 0\le t \le n \nu(x) \}.
\end{split}
\end{align}
The joint distribution of $(\cP(n),\cQ(n))$ is determined by the random variables $(\cP(n)(A)$, $\cQ(n)(B))$, where $A,B$ are Borel sets of $\R^d$. The same holds for $(n^{1/d}\cP_n,n^{1/d}\cQ_n)$. Using the independence property of the Poisson process, it is sufficient to consider the distributions of the type $[\cP(n) \setminus \cQ(n)](A)$ and $[n^{1/d}\cP_n\setminus n^{1/d}\cQ_n](A)$ or $[\cP(n) \cap \cQ(n)](A)$ and $[n^{1/d}\cP_n\cap n^{1/d}\cQ_n](A)$. Both follow the same Poisson distribution because
\begin{align*}
		\int_{  A } \int_{ \nu(x/n^{1/d}) }^{\kappa(x/n^{1/d})} \intd{t} \ \intd{x} &= \int_{ \{y: n^{1/d} y \in A\}} \int _{ \nu(x) }^{\kappa(x)} n \ \intd{t} \ \intd{y} = \int_{n^{-1/d}A} \int _{n \nu(x) }^{n \kappa(x)} \intd{t} \ \intd{y}
\end{align*}
and
\begin{align*}
				&\int_{  A } \int_{ 0 }^{\nu(x/n^{1/d}) \wedge \kappa(x/n^{1/d})} \intd{t} \ \intd{x} \\
				&= \int_{ \{y: n^{1/d} y \in A\}} \int _{ 0}^{\nu(x) \wedge \kappa(x)} n \ \intd{t} \ \intd{y} = \int_{n^{-1/d}A} \int _{0 }^{n (\nu(x) \wedge \kappa(x))} \intd{t} \ \intd{y}.
\end{align*}
For the rest of the manuscript we will use the following definitions that apply to both the Poisson and the binomial case:
\begin{align*}
		& \ol \chi_{\kappa,n} (t) \coloneqq n^{-1/2} ( \chi( \cK_{t,n} ) - \E{\chi( \cK_{t,n}  )} ),\\
		&	\ol \chi_{\nu,n} (t)  \coloneqq n^{-1/2} ( \chi( \wt \cK_{t,n} ) - \mathbb{E}[\chi( \wt \cK_{t,n} )] ),\quad\\		
		& \ol \chi_{\kappa,n,i} (t) \coloneqq n^{-1/2} ( \chi( \cK'_{t,n,i} ) - \E{\chi( \cK'_{t,n,i} )} ), \\
		& \ol \chi_{\nu,n,i} (t)  \coloneqq n^{-1/2} ( \chi( \wt \cK'_{t,n,i} ) - \mathbb{E}[\chi(\wt \cK'_{t,n,i} )] ),
\end{align*}
for $t\in [0,T]$ and $i \in [{b_n^\prime}]$. 

\begin{lemma}\label{L:MomentsEuler}
Let $m \in \N_0$. There is a constant $C \in\R_+$ not depending on $m$ such that
\[
		\binom{m}{k} = \frac{m!}{k!(m-k)!} \le C  \frac{2^m}{m^{1/2} },  \quad \forall k\in\{0,\ldots,m \}.
\]
In particular, if $\lambda\in\R_+$ and $X \sim \poi(\lambda)$, then $\mathbb{E}[ |\sum_{k=0}^X \binom{X}{k} |^q ] < \infty$ for all $q\in\R_+$.
\end{lemma}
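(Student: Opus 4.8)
The plan is to treat the two assertions separately. For the binomial bound I would exploit the unimodality of $k\mapsto\binom{m}{k}$: since $\binom{m}{k+1}/\binom{m}{k}=(m-k)/(k+1)\ge 1$ exactly when $k\le(m-1)/2$, the coefficients increase up to the central index and decrease thereafter, so that $\max_{0\le k\le m}\binom{m}{k}=\binom{m}{\lfloor m/2\rfloor}$. It then suffices to bound this central coefficient by $C\,2^m/m^{1/2}$, which I would obtain from Stirling's formula in an explicit two-sided (Robbins) form. For even $m=2n$ this gives $\binom{2n}{n}\le 4^n/\sqrt{\pi n}=\sqrt{2/\pi}\,2^m/m^{1/2}$, and for odd $m=2n+1$ the relation $\binom{2n+1}{n}=\binom{2n}{n}\,(2n+1)/(n+1)$ reduces the odd case to the even one, yielding a bound of the same form with a possibly larger constant. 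The cases $m\in\{0,1\}$ are immediate, since the right-hand side equals $+\infty$ for $m=0$ and the left-hand side is at most $1$ for $m=1$; taking $C$ to be the largest of the finitely many resulting constants finishes the first claim.

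For the second assertion the key point is the exact identity $\sum_{k=0}^X\binom{X}{k}=2^X$, which holds pathwise because $X\in\N_0$ almost surely. Hence $\bigl|\sum_{k=0}^X\binom{X}{k}\bigr|^q=2^{qX}=(2^q)^X$, and, evaluating the probability generating function of the Poisson law at $s=2^q$, one gets $\E{(2^q)^X}=e^{\lambda(2^q-1)}<\infty$ for every $q\in\R_+$. If one instead wishes to deduce this directly from the first part, as the phrasing ``in particular'' suggests, one bounds $\sum_{k=0}^X\binom{X}{k}\le (X+1)\max_{k}\binom{X}{k}\le C\,(X+1)\,2^X/X^{1/2}$ and notes that $\E{(X+1)^q\,2^{qX}}<\infty$, since a polynomial times an exponential is integrable against a Poisson distribution; the exact identity, however, renders this detour unnecessary.

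The only genuinely quantitative ingredient is the bound on the central binomial coefficient, so I expect the sole obstacle to be pinning down the constant in Stirling's approximation uniformly over the parity of $m$ and over the small values $m\in\{0,1\}$. Everything else reduces either to an exact combinatorial identity or to the standard evaluation of the Poisson generating function.
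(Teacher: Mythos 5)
Your proof is correct and takes essentially the same approach as the paper: both arguments bound the maximal (central) binomial coefficient via a two-sided Stirling estimate to obtain the $C\,2^m/m^{1/2}$ bound, and both settle the Poisson moment by reducing it, through the identity $\sum_{k=0}^{X}\binom{X}{k}=2^X$, to the finiteness of the exponential moment $\E{e^{\delta X}}=\exp(\lambda(e^{\delta}-1))$ (in your phrasing, the generating function evaluated at $s=2^q$). The only differences are cosmetic: you spell out the Robbins-form constants, the unimodality argument, and the parity and small-$m$ edge cases, which the paper handles more tersely.
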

\begin{proof}
The result relies on the Stirling formula
$$
			\sqrt{2\pi} n^{n+1/2} e^{-n} \le n! < e n^{n+1/2} e^{-n} \text{ for } n\in\N.
$$		
It is well-known that the binomial coefficient is maximal at $m/2$ if $m$ is even and at $(m+1)/2$ if $m$ is odd. Thus, if $m$ is even,
\begin{align*}
		 \frac{m!}{k!(m-k)!} &\le  \frac{m!}{((m/2)!)^2} \le \frac{m! \ 2^{m+1} }{2\pi \ m^{m+1} \ e^{-m} } \le \frac{e \ 2^{m} }{\pi \ m^{1/2} }.
\end{align*}
A similar result is valid if $m$ is odd. The claim regarding the moment of the Poisson random variable follows immediately because $\mathbb{E}[ e^{\delta X } ] = \exp( \lambda(e^\delta -1 ) )$ is finite for all $\delta< \infty$. This completes the proof.
\end{proof}

\begin{lemma}[Bounded moments condition]\label{L:BoundedMomentsCondition}
Let $\rho > 0$, $p\in\N$ and  $\nu \in B_\infty(\kappa,\rho)$. Then there is a constant $C_p\in\R_+$ depending on $\rho$ but not on $\nu$ such that
\begin{align*}
		\sup_{n\in\N_+}\sup_{i \in [{b_n^\prime}] } \ \sup_{t\in [0,T]}  \E{ | \chi( \wt \cK_{t,n}  ) - \chi(  \wt \cK'_{t,n,i} ) |^p } \le C_p < \infty
\end{align*}
in the Poisson and in the binomial sampling scheme and for both the {\v C}ech and the Vietoris-Rips complex.
\end{lemma}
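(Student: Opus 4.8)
The plan is to exploit the locality of the Euler characteristic, as encoded in the connectivity convention, in order to reduce the moment bound to an exponential-moment computation for the number of points in a bounded region, which is then supplied directly by Lemma~\ref{L:MomentsEuler}. The key observation is that $\wt\cK_{t,n}$ and $\wt\cK'_{t,n,i}$ are built from point configurations that differ only inside the cube $Q(z_{n,i})$ in the Poisson scheme (where $\cQ(n)\cap Q(z_{n,i})$ is replaced by $\cQ'(n)\cap Q(z_{n,i})$), resp.\ differ only through the single resampled point $Y_i\mapsto Y'_i$ in the binomial scheme; the two point sets agree everywhere else. A simplex therefore contributes to the difference $\chi(\wt\cK_{t,n})-\chi(\wt\cK'_{t,n,i})$ only if it contains at least one of the altered points. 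By the connectivity convention every such simplex has diameter at most $\delta$, so all of its vertices lie within distance $\delta$ of $Q(z_{n,i})$ (resp.\ of $n^{1/d}Y_i$ or $n^{1/d}Y'_i$). Consequently every contributing simplex is a subset of the points falling inside the bounded region $R$ obtained by enlarging $Q(z_{n,i})$ (resp.\ the relevant ball of radius $\delta$) by $\delta$.

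Next I would bound the difference in absolute value by the total number of contributing simplices. Since every such simplex is a subset of the point set inside $R$, and a set of $N$ points has $2^N$ subsets, I obtain the crude but sufficient bound $|\chi(\wt\cK_{t,n})-\chi(\wt\cK'_{t,n,i})|\le 2^{N+1}$, where $N$ counts all relevant points inside $R$ — in the Poisson case the points of both $\cQ(n)$ and $\cQ'(n)$ in $R$, and in the binomial case the neighbours of the resampled point. Taking $p$th moments gives $\E{|\chi(\wt\cK_{t,n})-\chi(\wt\cK'_{t,n,i})|^p}\le 2^p\,\E{2^{Np}}$, valid for both the {\v C}ech and the Vietoris--Rips filtration, the only difference between the two being the value of $\delta$ ($2T$ versus $T$).

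The final step is to control $\E{2^{Np}}$ uniformly in $n$, $i$, $t$ and $\nu$. Because $\nu\in B_\infty(\kappa,\rho)$, the intensity is bounded by $\|\nu\|_\infty\le\|\kappa\|_\infty+\rho$, so $N$ is Poisson distributed (in the binomial scheme, binomially distributed after conditioning on the location of the resampled point, the other $n-1$ points being independent with bounded density) with mean at most a constant $\lambda_0$ depending only on $\|\kappa\|_\infty$, $\rho$, $\delta$ and $d$, hence independent of $n$, $i$, $t$ and $\nu$. The identity $\E{2^{Nq}}=\E{(2^q)^N}\le\exp(\lambda_0(2^q-1))$ — the Poisson moment generating function in the Poisson case, and the elementary bound $(1+p(2^q-1))^n\le e^{np(2^q-1)}$ followed by integrating out the conditioning in the binomial case — then yields a finite bound, which is precisely the content of Lemma~\ref{L:MomentsEuler}. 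Setting $q=p$ produces the claimed constant $C_p$.

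The main obstacle is the geometric localisation step: one must verify carefully, for both filtration types and both sampling schemes, that the simplices contributing to the difference are genuinely confined to the $\delta$-enlargement of $Q(z_{n,i})$ and are correctly enumerated as subsets of the local point set. Once this localisation is in place, the remaining exponential-moment estimate is routine and is exactly packaged by Lemma~\ref{L:MomentsEuler}, with uniformity over $\nu$ following from the single inequality $\|\nu\|_\infty\le\|\kappa\|_\infty+\rho$.
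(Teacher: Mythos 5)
Your proof is correct, and for the Poisson sampling scheme it is essentially the paper's own argument: by the connectivity convention, the simplices contributing to $\chi(\wt\cK_{t,n})-\chi(\wt\cK'_{t,n,i})$ must meet the resampled cube and hence lie in $Q(z_{n,i})^{(\delta)}$; their number is at most of order $2^N$ with $N$ the local point count; and $N$ is Poisson with mean at most $|Q(0)^{(\delta)}|\,(\|\kappa\|_\infty+\rho)$, so all moments of $2^N$ are finite uniformly in $n$, $i$, $t\in[0,T]$ and $\nu\in B_\infty(\kappa,\rho)$ --- exactly the combination of localization and Lemma~\ref{L:MomentsEuler} used in the paper.

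In the binomial scheme your route deviates from the paper's, in a way that is arguably cleaner. The paper conditions on the resampled location $x$ and bounds the $p$-th moment of the simplex count by summing per-simplex probabilities, via $\p\big( r(\{x,Y_{j_1},\ldots,Y_{j_k}\})\le n^{-1/d}T \big)\le (A/n)^k$ and $\sum_k\binom{n-1}{k}(A/n)^k\le e^A$. For $p\ge 2$ the step that pulls the expectation inside the $p$-th power needs extra care: distinct simplices sharing vertices are positively correlated, and the joint probability for two simplices with $\ell$ common vertices is of order $(A/n)^{k_1+k_2-\ell}$ rather than $(A/n)^{k_1+k_2}$, so the inequality is not valid term by term. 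The uniform bound survives once one tracks the overlap $\ell$ (exactly as the paper does explicitly in the binomial part of the proof of Theorem~\ref{T:ApproximationEuler}), but the display in the proof of Lemma~\ref{L:BoundedMomentsCondition} glosses over this. Your argument --- dominate the number of contributing simplices by $2^{N+1}$, where $N$ counts the sample points within rescaled distance $\delta$ of the resampled point(s), observe that conditionally $N$ is Binomial$(n-1,q)$ with $q\le\lambda_0/n$, and bound $\E{(2^p)^N}\le(1+q(2^p-1))^{n-1}\le\exp\big(\lambda_0(2^p-1)\big)$ --- sidesteps the correlation issue entirely and makes the binomial case a verbatim copy of the Poisson one, at the price of a cruder constant, which is irrelevant for the statement.
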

\begin{proof}[Proof of Lemma~\ref{L:BoundedMomentsCondition}]
Let $\mY,\mZ$ be two point clouds. Then
\begin{align*}
		&| \chi( \cK_t(\mY)) - \chi(\cK_t(\mZ)) | \\
		 &\le \sum_{k \le \# \mY -1 } \#\{ \sigma \in \cK_t(\mY): \sigma \text{ is a $k$-simplex intersecting with } \mY \setminus \mZ \} \\
		&\quad + \sum_{k \le \# \mZ-1 } \#\{ \sigma \in \cK_t(\mZ): \sigma \text{ is a $k$-simplex intersecting with } \mZ \setminus \mY \}. 
\end{align*}
Consequently, it suffices to study the expression
\begin{align}\label{E:BoundedMomentsCondition1}
		\E{ \Big( \sum_{k \le \# \mY -1 } \#\{ \sigma \in \cK_t(\mY): \sigma \text{ is a $k$-simplex intersecting with } \mY \setminus \mZ \} \Big)^p }.
\end{align}
We put $\mY = \cQ(n), \mZ =  \cQ'^{\,i}(n) = (\cQ(n) \setminus Q(z_{n,i}) ) \cup ( \cQ'(n)\cap Q(z_{n,i})) $ in the Poisson case, and $\mY = n^{1/d} \mY_n, \mZ = n^{1/d}\mY'^{\,i} = n^{1/d} ( \{Y'_i\} \cup [\mY_n \setminus \{Y_i \}] )$ in the binomial case, where $i$ is a generic index. (Exchanging the roles of $\mY$ and $\mZ$, the following arguments stay the same.)

In the Poisson case, the result is an immediate consequence of Lemma~\ref{L:MomentsEuler}. Indeed, let $\delta>0$ be defined as in the beginning of Section~\ref{Section_TechnicalResults}. Let $U$ be a Poisson random variable with mean $ \ |Q(0)^{(\delta)}| \ (\sup \kappa+\rho)$. Then there is a constant such that \eqref{E:BoundedMomentsCondition1} is at most
\begin{align*}
		&\E{ \Big( \sum_{k\in\N_0} \# \{ \text{$k$-simplices } \sigma\in \cK_T ( \cQ(n)): \sigma \cap Q(z_{n,i} )\neq \emptyset		\} \Big)^p } \\
		&\le C \  \sum_{m\in\N} \p( U = m ) \left ( \sum_{k=0}^{m-1} \binom{m}{k+1} \right )^p  \le C_p < \infty,
\end{align*}
where we use that conditional on $m$ there are at most $\binom{m}{k+1}$ possible $k$-simplices, so the last result follows from Lemma~\ref{L:MomentsEuler}. Clearly, the constant $C_p$ is independent of $t\in [0,T]$, $z\in\Z^d$, $n$ and $\nu$ as long as $\| \nu - \kappa \|_\infty \le \rho$.

In the binomial case, the reasoning is quite similar and we can use that the number of points is deterministic. Conditional on the realization $n^{1/d} Y'_i = x$, \eqref{E:BoundedMomentsCondition1} amounts to
\begin{align*}
    		&\E{ \Big( \sum_{k=0}^{n-1} \#\Big\{ \sigma \in \cK_t( \{x\} \cup [n^{1/d} \mY_n  \setminus \{Y_i \} ] )	: \sigma \text{ is a $k$-simplex intersecting with } \{x\}	\Big\} \Big) ^p } \\
		&\le    \E{ \Big ( \sum_{k=0}^{n-1} \sum_{  \{j_1,\ldots, j_k \} \subseteq  [n] \setminus \{i\}   } \1{ r(\{ x,Y_{i_1},\ldots,Y_{i_k}  \} ) \le n^{- 1/d} T } \Big)^p  }  \\
		&\le  \left ( \sum_{k=0}^{n-1} \sum_{ \substack{ \{j_1,\ldots, j_k\} \subseteq [n] \setminus \{i\}  } }  \left( \frac{ C(d,T) ( \| \kappa \|_{\infty} + \rho ) }{n}  \right)^k \right )^p \\
		& \le \left ( \sum_{k=0}^{n-1} \binom{n-1}{k} \left( \frac{ C(d,T) ( \| \kappa \|_{\infty} + \rho )  }{n}  \right)^k \right )^p,
\end{align*}
where the constant $C(d,T)$ depends on $d,T$ but not on the location $x$. For the last sum we have
\begin{align*}
		&   \sum_{k=0}^{n-1} \frac{(n-1)!}{ (n-1-k)!k!}  \left( \frac{ C(d,T) (\| \kappa \|_{\infty} + \rho) }{n}  \right)^k  \\
		 & \le   \sum_{k=0}^{n-1} \frac{(n-1)^k}{ n^k }  \frac{ (C(d,T)  ( \| \kappa \|_{\infty} + \rho ) )^k }{ k! } \le \exp( C(d,T) ( \| \kappa \|_\infty + \rho ) ).
\end{align*} 
\end{proof}

\subsection{Continuity properties of the filtration time}
The following properties are crucial for the desired tightness results of the EC which we will prove below. We begin with the Vietoris-Rips complex, see also \cite{thomas2019functional} for a similar result for this filtration.
 
\begin{lemma}[Continuity in the Vietoris-Rips filtration]\label{L:ContinuityVR}
Let $q\ge 1$ and $Z_0,Z_1,\ldots,Z_q$ be independent and identically distributed on $B(0,1)$ with density $\kappa$. 

Let $r_{\cR}(\{ Z_0,Z_1,\ldots,Z_q \})$ be the filtration time of the simplex $\{ Z_0,Z_1,\ldots,Z_q \}$ in the Vietoris-Rips filtration. Then for all $0 \le a\le b<\infty$
\begin{align}\label{E:ContinuityVR0}
		\p( r_{\cR}(\{ Z_0,Z_1,\ldots,Z_q \}) \in (a,b] ) \le \alpha_d \| \kappa \|_\infty  \ q(q+1) \ ( b^d - a^d).
\end{align}
\end{lemma}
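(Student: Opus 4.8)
The plan is to reduce the statement to a single-pair estimate by a union bound. Recall from the definitions that in the Vietoris-Rips filtration the filtration time of a simplex equals its diameter, so $r_{\cR}(\{Z_0,\ldots,Z_q\}) = \max_{0 \le i < j \le q} \|Z_i - Z_j\|$. Since the maximum of a finite set of distances lies in $(a,b]$ only if it is attained by a pair whose distance already lies in $(a,b]$, I would first record the event inclusion
\[
\{ r_{\cR}(\{Z_0,\ldots,Z_q\}) \in (a,b] \} \subseteq \bigcup_{0 \le i < j \le q} \{ \|Z_i - Z_j\| \in (a,b] \},
\]
and then apply the union bound to pass to individual pairs.

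Next I would bound a single pairwise term. Conditioning on $Z_i = x$ and using that $Z_j$ is independent with density $\kappa$,
\[
\p( \|Z_i - Z_j\| \in (a,b] \mid Z_i = x ) = \int_{\{y\,:\, a < \|x - y\| \le b\}} \kappa(y) \intd{y} \le \|\kappa\|_\infty \cdot \big| \{ y : a < \|x - y\| \le b \} \big|.
\]
The set on the right is a spherical annulus, whose Lebesgue measure equals $\alpha_d(b^d - a^d)$ independently of $x$; integrating out $Z_i$ therefore yields $\p(\|Z_i - Z_j\| \in (a,b]) \le \alpha_d \|\kappa\|_\infty (b^d - a^d)$ for each pair.

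Finally I would combine these two estimates. There are $\binom{q+1}{2} = q(q+1)/2 \le q(q+1)$ unordered pairs among the $q+1$ points, so summing the per-pair bound over all of them gives
\[
\p( r_{\cR}(\{Z_0,\ldots,Z_q\}) \in (a,b] ) \le \frac{q(q+1)}{2} \, \alpha_d \, \|\kappa\|_\infty \, (b^d - a^d) \le \alpha_d \, \|\kappa\|_\infty \, q(q+1) \, (b^d - a^d),
\]
which is the claimed inequality \eqref{E:ContinuityVR0}. There is no serious obstacle here, as the argument is a routine union bound; the only points requiring a little care are the event inclusion in the first step (making sure that the maximum lying in $(a,b]$ forces the realizing pair into $(a,b]$, rather than merely forcing some pair to exceed $a$) and the annulus-volume computation, where one should note that restricting $y$ to $B(0,1)$ only decreases the integral and so leaves the bound valid.
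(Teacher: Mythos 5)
Your proposal is correct and follows essentially the same route as the paper's proof: the event inclusion into the union of pairwise events, a union bound, and a per-pair bound of $\|\kappa\|_\infty\,\alpha_d(b^d-a^d)$ via the annulus volume. The only (cosmetic) difference is that you count unordered pairs and so obtain the slightly sharper factor $q(q+1)/2$ before relaxing to $q(q+1)$, whereas the paper works directly with the factor $(q+1)q$.
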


\begin{proof}
We use that $\{ r_{\cR}(\{ Z_0,Z_1,\ldots,Z_q \}) \in (a,b] \}$ is contained in the finite union $\cup_{i\neq j}\{ r_{\cR}(\{Z_i,Z_j\})\in (a,b] \}$ to deduce that the left-hand side of \eqref{E:ContinuityVR0} is at most $(q+1)q \ \p(  r_{\cR}(\{Z_0,Z_1\})\in (a,b]  )$ where the last probability is at most $\| \kappa \|_\infty (| B_d(0,b) | - |B_d(0,a) |)$. Noting that the Lebesgue measure of $B_d(0,b)$ equals $\alpha_d \ b^d$, yields the result. 
\end{proof}

The next lemma gives the corresponding continuity properties in the {\v C}ech filtration.

\begin{lemma}[Continuity in the {\v C}ech filtration]\label{L:ContinuityCech}
Let $q\ge 1$ and $Z_0,Z_1,\ldots,Z_q$ be independent and identically distributed on $B(0,1)$ with density $\kappa$. Let $r_{\cC}(\{ Z_0,Z_1,\ldots,Z_q \})$ be the filtration time of the simplex $\{ Z_0,Z_1,\ldots,Z_q \}$ in the {\v C}ech filtration. Then there is a continuous function $g^*_d$ only depending on $d$,  such that for all $0\le a \le b<\infty$
\begin{align*}
		&\p( r_{\cC}(\{ Z_0,Z_1,\ldots,Z_q \}) \in (a,b] ) \\
		&\qquad \le (q+1)^{d+2}\ \max_{1\le m \le (d\wedge q)+1} (\|\kappa\|_\infty \ \alpha_{d}  )^{m} \ \cdot \ \int_a^b g^*_d(t) \intd{t}.
\end{align*}
\end{lemma}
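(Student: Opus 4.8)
The plan is to pass from the circumradius of the full $(q+1)$-point simplex to the circumradius of a small \emph{determining} subset, and then to control the law of the circumradius of a fixed number of points by a single change of variables. First I would recall that in the \v Cech filtration $r_\cC(\{Z_0,\dots,Z_q\})$ is exactly the radius of the minimum enclosing ball of the vertices, since $\bigcap_{x\in\sigma}B(x,t)\neq\emptyset$ holds precisely when some ball of radius $t$ contains $\sigma$. By the standard characterisation of the minimum enclosing ball (via Carath\'eodory/Helly), this ball is already the circumscribed ball of a subset $S$ of the vertices lying on its boundary sphere, with $m\coloneqq\#S\le (d\wedge q)+1$ and centre $c(Z_S)\in\operatorname{conv}(Z_S)\subseteq B(0,1)$, and its radius equals the circumradius $R(Z_S)$ of $S$. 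Writing $Z_S=\{Z_i:i\in S\}$, this gives the event inclusion
\[
\{r_\cC(\{Z_0,\dots,Z_q\})\in(a,b]\}\subseteq\bigcup_{m=1}^{(d\wedge q)+1}\ \bigcup_{\#S=m}\ \{R(Z_S)\in(a,b],\ c(Z_S)\in\operatorname{conv}(Z_S)\}.
\]

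Since the $Z_i$ are i.i.d., a union bound reduces everything to a single subset of each size: writing $R_m$ for the circumradius of $m$ i.i.d.\ points with density $\kappa$,
\[
\p(r_\cC\in(a,b])\le\sum_{m=1}^{(d\wedge q)+1}\binom{q+1}{m}\,\p\big(R_m\in(a,b],\ c\in\operatorname{conv}\big).
\]
The heart of the proof is then the single-subset estimate
\[
\p\big(R_m\in(a,b],\ c\in\operatorname{conv}\big)\le(\|\kappa\|_\infty\,\alpha_d)^m\int_a^b g_{d,m}(t)\,\intd t
\]
for a continuous $g_{d,m}$ depending only on $d$ and $m$. To prove it I would bound each of the $m$ density factors by $\|\kappa\|_\infty$, reducing to a purely geometric volume integral over $B(0,1)^m$, and then disintegrate Lebesgue measure on $(\R^d)^m$ according to the $(m-1)$-flat spanned by the points (the affine Blaschke--Petkantschin formula, which produces the flat-orientation integral over $A(d,m-1)$ and a simplex-volume power $\Delta^{d-m+1}$). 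Within that flat I would pass to the variables (circumcentre $c$, radius $t$, unit directions $u_0,\dots,u_{m-1}$), so that $z_i=c+t\,u_i$. The constraint $c\in\operatorname{conv}$ confines $c$ to $B(0,1)$, contributing the centre-volume factor $\le\alpha_d$; the residual dimensional constants (angular integrals, the $t$-dependent Jacobian) are then absorbed into $g_{d,m}$, which is continuous because Coxeter's Cayley--Menger expression for the circumradius exhibits the Jacobian as an algebraic function of $t$. Setting $g^*_d\coloneqq\max_{1\le m\le d+1}g_{d,m}$ gives a continuous function of $t$ depending only on $d$.

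It remains to assemble the pieces. Using that the outer sum has at most $(d\wedge q)+1\le q+1$ terms while $\binom{q+1}{m}\le(q+1)^m\le(q+1)^{d+1}$ for $m\le(d\wedge q)+1$, and bounding $(\|\kappa\|_\infty\alpha_d)^m g_{d,m}\le\big(\max_{1\le m\le(d\wedge q)+1}(\|\kappa\|_\infty\alpha_d)^m\big)g^*_d$, I obtain
\[
\p(r_\cC\in(a,b])\le(q+1)^{d+2}\max_{1\le m\le(d\wedge q)+1}(\|\kappa\|_\infty\,\alpha_d)^m\int_a^b g^*_d(t)\,\intd t,
\]
which is the claim; the factor $(q+1)^{d+2}$ is exactly the product of the number of admissible values of $m$ and the largest binomial coefficient that appears.

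The genuinely delicate step is the change of variables in the single-subset estimate: one must track the radius-dependent Jacobian so that exactly a continuous, locally integrable radial density $g_{d,m}(t)$ is extracted while all $t$-independent geometry is absorbed into dimensional constants, and then verify continuity of $g^*_d$ from the Cayley--Menger formula. As a sanity check on the constants, the case $m=2$ is elementary: there $R_2=\tfrac12\|Z_0-Z_1\|$, so integrating one point over $B(0,1)$ and the other over the annulus $\{\,\|z\|\in(2a,2b]\,\}$ gives $\p(R_2\in(a,b])\le(\|\kappa\|_\infty\alpha_d)^2\int_a^b d\,2^d t^{d-1}\,\intd t$, i.e.\ $g_{d,2}(t)=d\,2^d t^{d-1}$.
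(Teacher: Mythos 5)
Your proposal is correct, and its skeleton coincides with the paper's proof: both arguments reduce the filtration time of the full simplex to that of a determining subset of at most $(d\wedge q)+1$ vertices (the paper phrases this as the a.s.\ existence of an inclusion-minimal index set with the same filtration time; you phrase it via the Carath\'eodory characterization of the minimum enclosing ball, with the circumcentre in the convex hull), both apply a union bound whose cardinality is estimated by $(q+1)^{d+2}$, and both bound the density pointwise by $\|\kappa\|_\infty$ to reduce to a purely geometric statement about the circumradius of uniformly distributed points in $B(0,1)$. The one genuine difference is how that geometric statement is handled: the paper simply cites Lemma~6.10 of \cite{divol2020representation}, which says that the circumradius of finitely many iid uniform points in $B(0,1)$ has a bounded density with respect to Lebesgue measure, whereas you propose to prove it from scratch via the affine Blaschke--Petkantschin disintegration followed by the (circumcentre, radius, directions) change of variables. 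Your route is essentially the standard proof of the cited fact, and your $m=2$ sanity check with $g_{d,2}(t)=d\,2^d t^{d-1}$ is correct, so the approach is sound; what it buys is self-containedness, and what it costs is the radius-dependent Jacobian bookkeeping, which you explicitly flag but leave at sketch level --- precisely the computation the paper avoids by citation. A cosmetic remark: your per-subset factor $(\|\kappa\|_\infty\,\alpha_d)^{\#S}$ matches the exponent convention of the lemma's statement directly, while the paper's subsets of size $m_j+1$ produce $(\|\kappa\|_\infty\,\alpha_d)^{m_j+1}$; both are legitimate since the final bound takes a maximum over all admissible exponents.
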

\begin{proof}
To ease the notation we write $r$ for the filtration time and begin as follows. Let $z=(z_0,z_1,\ldots,z_q)\in B(0,1)^{q+1}$ be $(q+1)$ points in general position on $B(0,1)$. The circumsphere is the smallest $d$-dimensional ball containing all elements of $z$; its center is the circumcenter. Let $I\subset J \coloneqq \{0,1,\ldots,q\}$ and write $r[I](z)$ for the filtration time of the simplex $\{z_i:i\in I\}$.

Then the following observation is crucial: For almost every $z\in B(0,1)^{q+1}$, there are $\{i_0,i_1,\ldots,i_m\} \subset \{0,1,\ldots,q\}$ with $m\le (q+1) \wedge (d+1)$ such that 
$
	r(z) = r(\{z_{i_0}, z_{i_1},\ldots, z_{i_m}\}),
$
and $\{i_0,i_1,\ldots,i_m\} \subset \{0,1,\ldots,q\}$ is minimal w.r.t.\ inclusion (i.e. the filtration time of each strict subset of $\{i_0,i_1,\ldots,i_m\}$ is smaller). Indeed, the circumsphere and the circumcenter of $q+1$ points in general position in ${\mathbb R}^d$ is determined by at most $d+1$ points. Thus, depending on $z$, we find such a subset with cardinality at most $d+1$. In particular, given $d$ and $q$ the number of these minimal index sets is bounded above by
\[
		L\coloneqq L(d,q) \coloneqq 	\binom{q+1}{2} + \ldots + \binom{q+1}{(d+1)\wedge (q+1)} \le (q+1)^{d+2}.
\]
In the following, let $\{i_{j,0},\ldots,i_{j,m_j}\}$, $j\in [L]$, be an enumeration of these subsets.

The above insights allow us to construct the following upper bound given an arbitrary density function $\kappa$ on $B(0,1)$.
\begin{align}\label{E:ContinuityCech1}
		&\p( r(\{ Z_0,Z_1,\ldots,Z_q \}) \in (a,b] ) 	\nonumber	\\
		&\le \sum_{j=1}^{L} \p( r( \{Z_{i_{j,0}} ,Z_{i_{j,1}} ,\ldots,Z_{i_{j,m_j}}  \} ) \in (a,b] ) \nonumber \\
		&\le \sum_{j=1}^L \|\kappa \|_{\infty}^{m_j+1} \int_{B(0,1)^{m_j+1} } \1{ r(\{y_0,y_1,\ldots,y_{m_j} \} ) \in (a,b] } \intd{y}.
\end{align}
Set $\mathbb{Y} = \{Y_0,Y_1,\ldots,Y_m\}$, where the $Y_i$ are iid on $B(0,1)$ following a uniform distribution. Using the result of \cite[Lemma 6.10]{divol2020representation}, $r(\mathbb{Y})$ has a bounded density w.r.t.\ the Lebesgue measure on the real line, viz.,
$$
	\alpha_{d}^{-(m+1)} \int_{ B(0,1)^{m+1} } \1{ r(\{y_0,y_1,\ldots,y_{m} \} ) \in (a,b] } \intd{y} = \int_a^b g(u;m) \ \intd{u}
$$
for a bounded density function $g(\ \cdot\,; m)$ depending on $m$ only. Consequently, \eqref{E:ContinuityCech1} is at most
$$
	 \max_{1\le m \le (d\wedge q)+1} [ \|\kappa\|_\infty \ \alpha_{d}] ^{m} \int_a^b \sum_{j=1}^L g(u; m_j) \ \intd{u}.
$$
Setting $g^*_d = \max\{ g(\ \cdot \ ; m): m \in [d] \}$ yields the result.\end{proof}

\subsection{Approximation properties}

\begin{proof}[Proof of Theorem~\ref{T:ApproximationEuler}] Let $t\in [0,T]$. Using martingale differences and the definition of the $\sigma$-fields from \eqref{DefSigmaField}, we obtain
\begin{align}
			&\V( \ol \chi_{\kappa,n} (t) - \ol \chi_{\nu,n}(t) ) \nonumber \\
			&=  n^{-1} \sum_{i=1}^{{b_n^\prime}}  \E{ \Big( \E{ \chi_{\kappa,n}(t)  - \chi_{\nu,n}(t)  | \cG_{n,i} } - \E{ \chi_{\kappa,n}(t)  - \chi_{\nu,n}(t)  | \cG_{n,i-1} } \Big) ^2  	} \nonumber \\
			&= n^{-1} \sum_{i=1}^{{b_n^\prime}} \E{ \E{ \chi_{\kappa,n}(t)  - \chi_{\nu,n}(t) - \chi_{\kappa,n,i}(t) + \chi_{\nu,n,i}(t) \ | \ \cG_{n,i} }^2  	} \nonumber \\ 
			&\le n^{-1} \sum_{i=1}^{{b_n^\prime}} \E{  \Big( \chi_{\kappa,n}(t)  - \chi_{\nu,n}(t) - \chi_{\kappa,n,i}(t) + \chi_{\nu,n,i}(t)  \Big)^2 } 	. \label{E:ApproximationEuler1}
\end{align}
For the summands in \eqref{E:ApproximationEuler1} we have by using the definition of the EC that
\begin{align}
			& \E{  \Big(\sum_{k\in \N_0} (-1)^k \Big( S_k(\cK_{t,n} ) - S_k(\cK'_{t,n,i} ) -  S_k(\wt \cK_{t,n} ) + S_k(\wt \cK'_{t,n,i} )		\Big)		\Big)^2	 }\nonumber \\ 
			&\le 3 \E{	\Big(  \sum_{k\in \N_0} \Big| S_k(\cK_{t,n} ) - S_k(\cK'_{t,n,i} ) -  S_k(\wt \cK^*_{t,n,i} ) + S_k(\wt \cK'^*_{t,n,i} )		\Big| \Big)^2	}	\label{E:ApproximationEuler2} \\ 
			\begin{split}
			&\quad + 3 \E{ \Big( \sum_{k\in \N_0}  \Big|S_k(\wt \cK_{t,n} ) -  S_k(\wt \cK^*_{t,n,i} )	\Big|	\Big)^2} \\
			&\quad + 3 \E{	 \Big( \sum_{k\in \N_0}  \Big| S_k(\wt \cK'_{t,n,i} ) -  S_k(\wt \cK'^*_{t,n,i} )		\Big|	\Big)^2	}.  \label{E:ApproximationEuler3} 
			\end{split}
\end{align}
With $\epsilon \coloneqq \| \kappa - \nu \|_{\infty}$, it remains to show that each of the last three expectations is of order $\epsilon$ uniformly in $i\in [{b_n^\prime}], n$ and $t \in [0,T]$.  For this, we consider the two sampling schemes separately.

\textit{The Poisson case.} We define $\ol W(n)$ as the union of $\cP(n),\cQ(n), \cP'(n),\cQ'(n)$. Also define symmetric differences of Poisson processes: 
\[
	W_{n,\epsilon} = \cP(n) \triangle \cQ(n) \text{ and } W'_{n,\epsilon} =   \cP'(n) \triangle \cQ'(n).
	\]
	When restricted to a cube $Q=Q(z_{n,i})$, $W_{n,\epsilon}$ and $W'_{n,\epsilon}$ are not empty with a probability of order at most $\epsilon$. Clearly, given a point $Z$ in $W_{n,\epsilon}$, this point can only be involved in simplices which lie inside the $\delta$-neighborhood $Q^{(\delta)}$ of $Q$, where $\delta$ is the upper bound on the diameter of the simplices (defined at the beginning of Section 4), which is only depending on $T$ and $d$ but neither on $n$ nor on $i \in [b'_n].$
	
First we consider \eqref{E:ApproximationEuler3}, here we give the details for the first term only; the second term can be treated very similarly. Write $\ol W(n) = W_{n,\epsilon} \cup W'_{n,\epsilon} \cup W(n)$, where $W(n)$ is the Poisson process that collects all remaining points from $\ol W(n)\setminus ( W_{n,\epsilon} \cup W'_{n,\epsilon})$, so it has a finite intensity. Then $ | S_k(\wt \cK_{t,n,i} ) -  S_k(\wt \cK^*_{t,n,i} )|	$ is stochastically dominated by the random variable
\begin{align}\label{E:ApproximationEuler4} 
&\sum_{ Z \in W_{n,\epsilon} \cap Q } \sum_{   \substack{(Y_1,\ldots, Y_k ) \subseteq \ol W(n) \cap Q^{(\delta)} \\ Y_i \neq Y_j }  } \1{ r( \{	Z,Y_1,\ldots,Y_k \} ) \le T } \nonumber \\
& \le W_{n,\epsilon}(Q) \binom{ \ol W(n)(Q^{(\delta)} )}{k} \nonumber\\
&\le  C \1{ W_{n,\epsilon} (Q^{(\delta)}) > 0 } \sqrt{ \ol W(n)(Q^{(\delta)} ) } \ 2^{ \ol W(n)(Q^{(\delta)} ) },
\end{align}
where the last inequality follows from Lemma~\ref{L:MomentsEuler}, and by bounding $W_{n,\epsilon}(Q)$ by $\ol W(n)(Q^{(\delta)})$.

We can compute moments of this last expression by exploiting the independence between $(\cP,\cQ)$ and $(\cP',\cQ')$. Indeed, the components $W_{n,\epsilon}$, $W'_{n,\epsilon}$ and $W(n)$ are independent, and so it is sufficient to consider (for $\wt C\in\R_+$)
\begin{align*}
		\E{  \1{ W_{n,\epsilon} (Q^{(\delta)}) > 0 } \ {\wt C}^{  W_{n,\epsilon} (Q^{(\delta)} ) } } &= \sum_{k=1}^{\infty} \p(  W_{n,\epsilon} (Q^{(\delta)}) = k ) \  {\wt C}^{ k} \\
		& \le C_1 (1 - e^{-C_2 \epsilon} ) \le C_3 \epsilon,
 \end{align*}
for constants $C_1,C_2,C_3 < \infty$; the last inequality follows by the mean-value theorem. This completes the considerations for \eqref{E:ApproximationEuler3}.

Second, we study the term in \eqref{E:ApproximationEuler2}. This second order difference 
$$
	 | S_k(\cK_{t,n} ) - S_k(\cK'_{t,n,i} ) -  S_k(\wt \cK^*_{t,n,i} ) + S_k(\wt \cK'^*_{t,n,i}) |	
$$
 can only be non zero if $W_{n,\epsilon}(Q^{(\delta)})>0$. The conclusion follows now in a similar fashion as before (see \eqref{E:ApproximationEuler4}). Reasoning as we did after \eqref{E:CouplingAlt}, we deduce \eqref{E:ApproximationEuler0}.

\textit{The binomial case.} The structure of the proof works in the same fashion. Since after applying the decomposition in martingale differences both  \eqref{E:ApproximationEuler2} and  \eqref{E:ApproximationEuler3} do not depend on the $\sigma$-field $\cG_{n,i}$,  we consider w.l.o.g. the case $i=1$ (this simplifies the notation). We begin with the first term in \eqref{E:ApproximationEuler3}:
\begin{align*}
			& \sum_{k=0}^{n-1} \Big | S_k( \cK_t (n^{1/d} \{Y_1, Y_2,\ldots, Y_n \}))  - S_k( \cK_t ( n^{1/d} \{ X_1,Y_2, \ldots, Y_n \}) ) \Big| \\
			& \le \1{Y_1 \neq X_1 } \ \sum_{k=0}^{n-1} \sum_{ i_1,\ldots, i_k} \1{  r(\{ Y_1, Y_{i_1}, \ldots, , Y_{i_k} 		\}) \le n^{- 1/d} \ T	} \\
			&\quad \qquad\qquad\qquad\qquad\qquad\qquad +  \1{  r(\{ X_1, Y_{i_1}, \ldots, , Y_{i_k} 		\}) \le n^{- 1/d} \ T	},
\end{align*}
where the second sum is taken over all combinations $(i_1,\ldots,i_k)$ in $\{2,\ldots,n\}$ with pairwise different indices. 

Also, for $k_1,k_2 \le n-1$ and for two sets $\{i_1,\ldots, i_{k_1} \}, \{j_1,\ldots, j_{k_2} \}$, which have $\ell$ common elements, the probabilities
\begin{align*}
	&	\p( r(\{ Y_1, Y_{i_1}, \ldots, , Y_{i_{k_1} } 		\}) \le n^{-1/d} \ T ,  r(\{ X_1, Y_{j_1}, \ldots , Y_{j_{k_2} } 		\}) \le n^{-1/d} \  T \ | \ X_1, Y_1 ) \\
	& \text{and }\\
	&  \p( r(\{ Y_1, Y_{i_1}, \ldots, , Y_{i_{k_1} } 		\}) \le n^{-1/d} \ T ,  r(\{ Y_1, Y_{j_1}, \ldots , Y_{j_{k_2} } 		\}) \le n^{-1/d} \ T \ | \ X_1, Y_1 )
\end{align*}
are at most $(A/n)^{k_1+k_2-\ell}$ for a constant $A \ge C_{d,T} \ ( \|\kappa\|_\infty + \rho )$, where the constant $C_{d,T}$ only depends on $d$ and $T$.
Using this last insight, elementary combinatorial calculations unveil
\begin{align*}
	&\E{ \Big( \sum_{k=0}^{n-1} \Big | S_k( \cK_t ( n^{1/d} \{Y_1, Y_2,\ldots, Y_n \}))  - S_k( \cK_t(n^{1/d} \{ X_1,Y_2, \ldots, Y_n \}))) \Big|\;	\Big)^2 } \\
	&\le \epsilon \ \sum_{k_1,k_2 = 0}^{n-1} \sum_{\ell=0}^{k_2 \wedge k_1}  \binom{ n-1 }{\ell} \ \binom{ n-1-\ell }{ k_1 - \ell } \ \ \binom{ n-1-k_1 }{ k_2 - \ell } \Big( \frac{A}{n} \Big) ^{k_1 + k_2 - \ell } \\
	&= \epsilon \ \sum_{k_1,k_2 = 0}^{n-1} \sum_{\ell=0}^{k_1 \wedge k_2}  \frac{(n-1)!}{ (n-1-k_2-k_1+\ell )! \ n^{k_1+ k_2 -\ell} } \frac{A^{\ell} }{\ell!} \frac{A^{k_1 - \ell} }{(k_1 - \ell) !} \frac{A^{k_2 - \ell} }{(k_2 - \ell) !} \\
	&\le  \epsilon \ \sum_{k_1,k_2 = 0}^{n-1} \sum_{\ell=0}^{k_1 \wedge k_2}  \frac{A^{\ell} }{\ell!} \frac{A^{k_1 - \ell} }{(k_1 - \ell) !} \frac{A^{k_2 - \ell} }{(k_2 - \ell) !} \le C \epsilon,
\end{align*}
for a constant $C< \infty$.

The bound for the second order difference in \eqref{E:ApproximationEuler2} follows in a similar fashion; we omit most of the  details here, but refer to the proof of Theorem~\ref{T:FunctionalWasserstein}, where second order differences are studied in great detail. We have, 
\begin{align*}
		& \Big|  S_k( \cK_t( n^{1/d}	\{ X_1,X_2,\ldots, X_n		\}	) )  - S_k( \cK_t(n^{1/d}	\{ X'_1,X_2,\ldots, X_n		\} )	) \\
		&\qquad - S_k( \cK_t(	n^{1/d}\{ X_1,Y_2,\ldots, Y_n	\} )	)	+  S_k( \cK_t(	n^{1/d}\{ X'_1,Y_2,\ldots, Y_n		\}	) )	\Big| \\
&= \Big| \sum_{i_1,\ldots,i_k} \Big( \1{ r( \{X_1,X_{i_1},\ldots, X_{i_k} 	\} ) \le n^{-1/d} \ t } \\
&\quad\qquad\qquad  - \1{ r( \{X'_1,X_{i_1},\ldots, X_{i_k} 	\} ) \le n^{-1/d} \ t } \Big) \\
&\quad\qquad -  \sum_{i_1,\ldots,i_k} \Big( \1{ r( \{X_1,Y_{i_1},\ldots, Y_{i_k} 	\} ) \le n^{-1/d} \ t }  \\
&\quad \qquad\qquad -  \1{ r( \{X'_1,Y_{i_1},\ldots, Y_{i_k} 	\} ) \le n^{-1/d} \ t } \Big) \Big | \\
&\le   \sum_{i_1,\ldots,i_k}   \Big |  \1{ r( \{X_1,X_{i_1},\ldots, X_{i_k} 	\} ) \le n^{-1/d} \ t } \\
&\quad\qquad \qquad - \1{ r( \{X_1,Y_{i_1},\ldots, Y_{i_k} 	\} ) \le n^{-1/d} \  t }  \Big |   \\
&\quad + \sum_{i_1,\ldots,i_k} \Big | \1{ r( \{X'_1,Y_{i_1},\ldots, Y_{i_k} 	\} ) \le n^{-1/d} \ t } \\
&\quad\qquad\qquad -  \1{ r( \{X'_1,X_{i_1},\ldots, X_{i_k} 	\} ) \le n^{-1/d} \ t } \Big |,
\end{align*}
where the sums are taken over all $\binom{n-1}{k}$ $k$-element subsets $(i_1,\ldots,i_k)$ of $\{2,\ldots,n\}.$ 
Clearly, a term only contributes to the sum if there is at least one index $u \in \{i_1,\ldots, i_k\}$ for which $X_u \neq Y_u$.  By using similar arguments as in the proof of Theorem~\ref{T:FunctionalWasserstein}), we arrive at:
\begin{align*}
		& \mathbb{E} \Big[ \Big( \sum_{k=0}^{n-1} \Big|  S_k( \cK_t(  n^{1/d} 	\{ X_1,X_2,\ldots, X_n		\}	)  ) - S_k( \cK_t(  n^{1/d}  	\{ X'_1,X_2,\ldots, X_n		\}	)  )\\
		&\quad - S_k( \cK_t( n^{1/d} 	\{ X_1,Y_2,\ldots, Y_n	\}	) )	+  S_k( \cK_t(	n^{1/d}  \{ X'_1,Y_2,\ldots, Y_n		\}	) )	\Big| \ \Big)^2 \Big] \le C \epsilon.
\end{align*}
This yields \eqref{E:ApproximationEuler0}. The rate of convergence in the Kantorovich-Wasserstein distance is an immediate consequence.
\end{proof}

\begin{proof}[Proof of Theorem~\ref{T:FunctionalWasserstein}]
The proof has the same structure as the proof of Theorem~\ref{T:ApproximationEuler}. Let $\nu$ be arbitrary but fixed with $\epsilon \coloneqq \|\kappa - \nu \|_\infty \le \rho$. Let $[0,T]$ be partitioned into $J$ equidistant intervals of length $T/J$ marked by the points $t_0,t_1,\ldots,t_J$. We apply the fundamental decomposition
\begin{align}
		&\E{ \sup_{t\in [0,T]} |\ol \chi_{\kappa,n}(t) - \ol \chi_{\nu,n}(t) |^2 } \nonumber \\
		\begin{split}\label{E:FunctionEulerApproximation1}
		&\le 2 \ J \ \max_{i \le J} \E{  |\ol \chi_{\kappa,n}(t_i) - \ol \chi_{\nu,n}(t_i) |^2 } \\
		&\quad + 2 \ J \ \max_{i \le J} \E{ \sup_{t\in [t_{i-1},t_i] } |\ol \chi_{\kappa,n}(t) - \ol \chi_{\nu,n}(t) - \ol \chi_{\kappa,n}(t_i) + \ol \chi_{\nu,n}(t_i)|^2 }.
\end{split}\end{align}
The first term in \eqref{E:FunctionEulerApproximation1} can be treated with the result in \eqref{E:ApproximationEuler0}. In order to obtain a bound on the second term, we use the monotonicity of the EC. It is enough to study a generic index $i\in [J]$, so we set $a=t_{i-1}$ and $b=t_i$. Also, we write $t^*$ for the time in $[a,b]$, where the supremum is attained. So $t^*$ is random and measurable.

First, we decompose the EC in two terms which contain the simplices of even, resp.\ odd, dimension. So, the first term is indexed by $I_1 = \{ k \in \N_0: k \text{ is even} \}$, the second by $I_2 = \N_0\setminus I_1$. We only consider the index set $I_1$, $I_2$ works in a similar fashion. The part of the second term in \eqref{E:FunctionEulerApproximation1}, which is related to $I_1$, is then
\begin{align}\begin{split}\label{E:FunctionEulerApproximation2}
		&n^{-1} \ \mathbb{E} \Bigg[ \Big( \sum_{k\in I_1} S_k( \cK_{t^*,n} ) - S_k( \wt \cK_{t^*,n} ) - S_k( \cK_{b,n} ) - S_k( \wt \cK_{b,n} ) \\
		&\qquad \qquad  - {\mathbb E}\Big[\sum_{k\in I_1}  S_k( \cK_{t^*,n} ) - S_k( \wt \cK_{t^*,n} ) - S_k( \cK_{b,n} ) - S_k( \wt \cK_{b,n} ) \Big]  \Big)^2\Bigg].
\end{split}\end{align}
We have for a specific dimension $k\in \N_0$
\begin{align*}
		& S_k( \cK_{t^*,n} ) - S_k( \wt \cK_{t^*,n} ) - S_k( \cK_{b,n} ) + S_k( \wt \cK_{b,n} ) \\
		&= \sum_{ \substack{ \sigma \in \wt \cK_{b,n} \setminus \cK_{b,n} , \\ \dim(\sigma) = k } } \1{ r(\sigma) \in (t^*,b] } \qquad  - \sum_{ \substack{ \sigma \in \cK_{b,n} \setminus \wt \cK_{b,n}, \\ \dim(\sigma) = k } } \1{ r(\sigma) \in (t^*,b] } ,
\end{align*}
all other simplices cancel. Using this insight, we split \eqref{E:FunctionEulerApproximation2} in two terms as follows
\begin{align}\begin{split}\label{E:FunctionEulerApproximation3}
		&n^{-1} \ \mathbb{E} \Bigg[ \Bigg| \sum_{k\in I_1} \sum_{ \substack{ \sigma \in \cK_{b,n} \setminus \wt \cK_{b,n} , \\ \dim(\sigma) = k } } \1{ r(\sigma) \in (t^*,b] } \\
		&\qquad\qquad\qquad - \mathbb{E}\Big[ \sum_{k\in I_1} \sum_{ \substack{ \sigma \in \cK_{b,n} \setminus \wt \cK_{b,n} , \\ \dim(\sigma) = k } } \1{ r(\sigma) \in (t^*,b] } \Big]  \Bigg|^2 \Bigg], \\
		&n^{-1} \ \mathbb{E} \Bigg[ \Bigg| \sum_{k\in I_1} \sum_{ \substack{ \sigma \in \wt \cK_{b,n} \setminus \cK_{b,n} , \\ \dim(\sigma) = k } } \1{ r(\sigma) \in (t^*,b] }  \\
		&\qquad\qquad\qquad  - \mathbb{E}\Big[ \sum_{k\in I_1} \sum_{ \substack{ \sigma \in \wt \cK_{b,n} \setminus \cK_{b,n} , \\ \dim(\sigma) = k } } \1{ r(\sigma) \in (t^*,b] } \Big] \Bigg|^2 \Bigg].
\end{split}\end{align}
Clearly, it is enough to study the first term \eqref{E:FunctionEulerApproximation3}. If it is positive, then the double sum in the first term in \eqref{E:FunctionEulerApproximation3} is at most
\begin{align}
	&\sum_{k\in I_1} \sum_{ \substack{ \sigma \in \cK_{b,n} \setminus \wt \cK_{b,n} , \\ \dim(\sigma) = k } } \1{ r(\sigma) \in (a,b] }  - \mathbb{E}\Bigg[ \sum_{k\in I_1} \sum_{ \substack{ \sigma \in \cK_{b,n} \setminus \wt \cK_{b,n} , \\ \dim(\sigma) = k } } \1{ r(\sigma) \in (a,b] } \Bigg] \nonumber \\
	&\quad + \mathbb{E}\Bigg[ \sum_{k\in I_1} \sum_{ \substack{ \sigma \in \cK_{b,n} \setminus \wt \cK_{b,n} , \\ \dim(\sigma) = k } } \1{ r(\sigma) \in (a,b] } - \1{ r(\sigma) \in (t^*,b] } \Bigg] \nonumber\\
	\begin{split}\label{E:FunctionEulerApproximation4}
	&\le \Bigg| \sum_{k\in I_1} \sum_{ \substack{ \sigma \in \cK_{b,n} \setminus \wt \cK_{b,n} , \\ \dim(\sigma) = k } } \1{ r(\sigma) \in (a,b] }  - \mathbb{E}\Bigg[ \sum_{k\in I_1} \sum_{ \substack{ \sigma \in \cK_{b,n} \setminus \wt \cK_{b,n} , \\ \dim(\sigma) = k } } \1{ r(\sigma) \in (a,b] } \Bigg] \Bigg| \\
	&\quad + \mathbb{E}\Bigg[ \sum_{k\in I_1} \sum_{ \substack{ \sigma \in \cK_{b,n} \setminus \wt \cK_{b,n} , \\ \dim(\sigma) = k } } \1{ r(\sigma) \in (a,b] } \Bigg]  .
	\end{split}
\end{align}
Otherwise, if it is negative, then the double sum in the first term in \eqref{E:FunctionEulerApproximation3} is at most
\begin{align*}
		& \mathbb{E}\Bigg[ \sum_{k\in I_1} \sum_{ \substack{ \sigma \in \cK_{b,n} \setminus \wt \cK_{b,n} , \\ \dim(\sigma) = k } } \1{ r(\sigma) \in (a,b] } \Bigg],
\end{align*}
and this term is already contained in the estimate in \eqref{E:FunctionEulerApproximation4}. Hence, it is enough to derive upper bounds for
\begin{align}
		&n^{-1} \mathbb{E}\Bigg[  \Bigg| \sum_{k\in I_1} \sum_{ \substack{ \sigma \in \cK_{b,n} \setminus \wt \cK_{b,n} , \\ \dim(\sigma) = k } } \1{ r(\sigma) \in (a,b] }  - \mathbb{E}\Big[ \sum_{k\in I_1} \sum_{ \substack{ \sigma \in \cK_{b,n} \setminus \wt \cK_{b,n} , \\ \dim(\sigma) = k } } \1{ r(\sigma) \in (a,b] } \Big]  \Bigg|^2 \Bigg]\label{E:FunctionEulerApproximation5}\\
		&\text{ and } \quad n^{-1} \mathbb{E}\Bigg[ \sum_{k\in I_1} \sum_{ \substack{ \sigma \in \cK_{b,n} \setminus \wt \cK_{b,n} , \\ \dim(\sigma) = k } } \1{ r(\sigma) \in (a,b] } \Bigg]^2 \label{E:FunctionEulerApproximation6}
\end{align}
separately. We begin with \eqref{E:FunctionEulerApproximation5}, which can be treated with an MDS approach similar as in the proof of Lemma~\ref{L:BoundedMomentsCondition} and Theorem~\ref{T:ApproximationEuler}. By using this MDS approach we obtain that the expression in \eqref{E:FunctionEulerApproximation5} is at most
\begin{align}
			&n^{-1} \sum_{i=1}^{{b_n^\prime}} \mathbb{E}\Bigg[ \Bigg| \sum_{k\in I_1} \sum_{ \substack{ \sigma \in \cK_{b,n} \setminus \wt \cK_{b,n} , \\ \dim(\sigma) = k } } \1{ r(\sigma) \in (a,b] }  - \sum_{ \substack{ \sigma' \in \cK'_{b,n,i} \setminus \wt \cK'_{b,n,i} , \\ \dim(\sigma) = k } } \1{ r(\sigma') \in (a,b] }  \Bigg|^2 \Bigg] \label{boundE:FunctionEulerApproximation5}
\end{align}
We continue by estimating (\ref{boundE:FunctionEulerApproximation5}) and (\ref{E:FunctionEulerApproximation6})  in the Poisson and the binomial case separately.\\

\textit{The Poisson case.} Using the fact that the simplices $\sigma$ with $\sigma \cap Q(z_{n,i}) = 0$ appear in both of the double sums inside the expectation in (\ref{boundE:FunctionEulerApproximation5}), we can bound \eqref{boundE:FunctionEulerApproximation5} by
\begin{align}
		\begin{split}\label{E:FunctionEulerApproximation7}
		& 2 \ n^{-1} \sum_{i=1}^{{b_n^\prime}} \mathbb{E}\Bigg[ \Bigg| \sum_{k\in I_1} \sum_{ \substack{ \sigma \in \cK_{b,n} \setminus \wt \cK_{b,n} , \\ \dim(\sigma) = k } } \1{\sigma\cap Q(z_{n,i}) \neq \emptyset, r(\sigma) \in (a,b] }    \Bigg|^2 \Bigg] \\
		&\qquad\qquad + 2 \ n^{-1} \sum_{i=1}^{{b_n^\prime}} \mathbb{E}\Bigg[ \Bigg| \sum_{k\in I_1}  \sum_{ \substack{ \sigma' \in \cK'_{b,n,i} \setminus \wt \cK'_{b,n,i} , \\ \dim(\sigma) = k } } \1{\sigma' \cap Q(z_{n,i}) \neq \emptyset, r(\sigma') \in (a,b] } \Bigg|^2 \Bigg].
		\end{split}
\end{align}
 Clearly, it is enough to study the first term in \eqref{E:FunctionEulerApproximation7}. We show that there is a constant, which is uniform in $i$ and $n$, such that each expectation is at most $C |b-a| \epsilon $, where $\epsilon$ is an upper bound for the supremum distance between the densities $\kappa$ and $\nu$. Let $i\in [{b_n^\prime}]$ be arbitrary but fixed and set $Q=Q(z_{n,i})$. Moreover, we let
 \begin{align*}
			W_n = \cP(n) \cap \cQ(n), \qquad 
			\cP_{n,\epsilon} = \cP(n)\setminus \cQ(n), \qquad 
			\cQ_{n,\epsilon} = \cQ(n) \setminus \cP(n). 
\end{align*}
    These processes are independent. First, we compute the expectation on the cube $Q$ given that $W_{n} (Q^{(\delta)}) = m$ and $\cP_{n,\epsilon} (Q^{(\delta)}) = \wt m$, so that we can write $\cP_{n,\epsilon} \cap Q^{(\delta)} = \{Z_1,\ldots, Z_{\wt m} \}$ and $\cP(n) \cap Q^{(\delta)} = \{Y_1,\ldots, Y_{m^*} \}$, where $m^* = m + \wt m$. Then the expectation is dominated by 
\begin{align}\label{E:FunctionEulerApproximation8}
			 \E{ \Bigg| \sum_{k=0}^{m-1} \sum_{u=1}^{\wt m} \sum_{ (i_1,\ldots,i_k) \subseteq [m^*] } \1{ r(\{Z_u, Y_{i_1},\ldots,Y_{i_k} \}) \in (a,b]} \Bigg|^2 },
\end{align}
note that this expression is 0 if $\wt m = 0$ because each simplex necessarily contains at least one Poisson point of $\cP_{n,\epsilon}$. In order to compute the expectation, we need to control
\begin{align}\label{E:FunctionEulerApproximation8a}
		\p( r(\{Z_u, Y_{i_1},\ldots,Y_{i_k} \}) \in (a,b], r(\{Z_{u'}, Y_{j_1},\ldots,Y_{j_{k'} } \}) \in (a,b] )
\end{align}
for arbitrary tuples $(i_1,\ldots,i_k), (j_1,\ldots,j_{k'})$ and indices $k,k', u,u'$. For this, we simply omit the simplex with the higher dimension. We obtain for the Vietoris-Rips filtration the uniform upper bound $C_{d,T,\|\kappa \|_\infty,\rho}  (k\wedge k')^2 |b-a|$, as in Lemma~\ref{L:ContinuityVR}. The constant $C_{d,T,\|\kappa\|_\infty,\rho}$ only depends on $d$, $T$, $\|\kappa\|_\infty,\rho$ and is independent* of $\nu$. Then \eqref{E:FunctionEulerApproximation8} is at most
\begin{align}
				& C \sum_{k,k' = 1}^{m^*} \sum_{u,u'=1}^{\wt m} \binom{ m^*}{k} \binom{ m^*}{k'}  (k \wedge k')^2 |b-a| \\
				& \le C (m^*)^p 2^{2m^*} |b-a| \1{ \wt m > 0},\label{E:FunctionEulerApproximation8b}
\end{align}
for some $p\in\R_+$, and a constant $C$ independent of $i\in [{b_n^\prime}]$, $n$ and independent* of $\nu$.

If the {\v C}ech filtration is used instead, we bound above the probability in \eqref{E:FunctionEulerApproximation8a} by $C'_{d,T,\|\kappa \|_\infty,\rho} (k\wedge k')^{d+2} |b-a|$ for a constant $C'_{d,T,\|\kappa\|_\infty,\rho}$, which only depends on $d$, $T$, $\|\kappa\|_\infty,\rho$ and which is independent* of $\nu$, see Lemma~\ref{L:ContinuityCech}. So the upper bound in \eqref{E:FunctionEulerApproximation8b} changes only in terms of the constants but not in its structure.

Finally, we have to weigh this last upper bound according to the distribution of $\cP_{n,\epsilon} (Q^{(\delta)})$ and $W_n (Q^{(\delta)})$. Note that the Poisson parameter of $\cP_{n,\epsilon} (Q^{(\delta)})$ is at most $\epsilon \ |Q^{(\delta)}|$. It is now straightforward to show that (for each $\wt C< \infty$)
\begin{align*}
		&\sum_{m\in\N_0} \sum_{\wt m\in\N_0} \Big(\p(W_n (Q^{(\delta)}) = m ) \ \p( \cP_{n,\epsilon} (Q^{(\delta)}) = \wt m) \ (m^*)^p e^{\wt C m^*} |b-a| \1{ \wt m > 0}\Big) \\
		& \le C \epsilon |b-a|
\end{align*}
for a constant $C$ that is also independent of $i\in[\kappa]$ and $n$ and independent* of $\nu$. This shows the claim for \eqref{E:FunctionEulerApproximation5} because $|b-a| = T/ J$. 

The claim regarding \eqref{E:FunctionEulerApproximation6} follows in a similar fashion by partitioning again the simplices according to their position:
\begin{align*}
		& n^{-1} \mathbb{E}\Bigg[ \sum_{i=1}^{{b_n^\prime}} \sum_{k\in I_1} \sum_{ \substack{ \sigma \in \cK_{b,n} \setminus \wt \cK_{b,n} , \\ \dim(\sigma) = k } } \1{ \sigma\cap Q(z_{n,i}) \neq \emptyset, r(\sigma) \in (a,b] } \Bigg]^2 \\
		&\le n^{-1} \left( \sum_{i=1}^{{b_n^\prime}} C \epsilon |b-a| \right)^2  \le C n |b-a|^2 \epsilon^2.
\end{align*}
This last upper bound is of order $C T^2 \epsilon^2 n J^{-2}$. 

\textit{The binomial case.} Again, we begin with \eqref{boundE:FunctionEulerApproximation5}. Here the martingale difference sequence is constructed by replacing points $X_i$ and $Y_i$ with independent points $X'_i$ and $Y'_i$,  respectively. For a fixed $i$, we have the following relation. A $k$-simplex $\sigma$ in $\cK_{b,n}$ not containing $n^{1/d} X_i$,  also lies in $\cK'_{b,n,i}$, and thus cancels in (\ref{boundE:FunctionEulerApproximation5}). The same holds for a $k$-simplex $\sigma'$ not containing $n^{1/d} X'_i$: if $\sigma' \in \cK'_{b,n,i}$, then $\sigma' \in \cK_{b,n}$. Again, these simplices cancel in (\ref{boundE:FunctionEulerApproximation5}). Clearly, this relation similarly holds for the simplicial complexes $\wt \cK_{b,n}$ and $\wt \cK_{b,n,i}$. Hence, \eqref{boundE:FunctionEulerApproximation5} is at most
\begin{align}
		\begin{split}\label{E:FunctionEulerApproximation9}
		& 2 \ n^{-1} \sum_{i=1}^{n} \mathbb{E}\Bigg[ \Bigg| \sum_{k\in I_1} \sum_{ \substack{ \sigma \in \cK_{b,n} \setminus \wt \cK_{b,n} , \\ \dim(\sigma) = k } } \1{\sigma = n^{1/d} \{X_i, X_{j_1}, \ldots, X_{j_k}\} , \{j_1 ,\ldots, j_k \} \subseteq [n] } \\
		&\qquad\qquad  \1{ r(\sigma) \in  (a,b]  , X_i \neq Y_i \text{ or } X_{j_\ell}\neq Y_{j_\ell} \text{ for at least one } \ell\in [k] } \Bigg|^2 \Bigg] \end{split}
		\\
		&\quad +2 \ n^{-1} \sum_{i=1}^{n} \mathbb{E}\Bigg[ \Bigg| \sum_{k\in I_1} \sum_{ \substack{ \sigma \in \cK_{b,n} \setminus \wt \cK_{b,n} , \\ \dim(\sigma) = k } } \1{\sigma = n^{1/d} \{X'_i, X_{j_1}, \ldots, X_{j_k}\} ,  \{j_1 ,\ldots, j_k \} \subseteq [n] }  \nonumber \\
		&\qquad\qquad  \1{ r(\sigma) \in (a,b] ,X'_i \neq Y'_i \text{ or } X_{j_\ell}\neq Y_{j_\ell} \text{ for at least one } \ell\in [k] }  \Bigg|^2 \Bigg] . \nonumber
\end{align}
It suffices to consider the expectation in \eqref{E:FunctionEulerApproximation9} for an arbitrary but fixed index $i$. Let $N$ be the number of observations $X_j$ in the $\delta$-neighborhood of $X_i$, so, 
$$
	N = \sum_{j: j\neq i} \1{ n^{1/d} X_j \in B( n^{1/d} X_i,\delta) } + 1.
$$
Conditional on $N$ and $X_i$, we can then compute the expectation. To this end, consider a generic simplex $\{ n^{1/d} X_i, Z_1,\ldots,Z_k\}$, where $Z_i$ are iid on $B(n^{1/d} X_i,\delta)$ from $\mX_n$ with a strictly positive and bounded density function. In the same fashion, we write $\wt Z_i$ for the corresponding elements from $\mY_n$. Then, for the Vietoris-Rips complex, by using Lemma~\ref{L:ContinuityVR},
\begin{align}\begin{split}\label{E:FunctionEulerApproximation9b}
	&	 \p\Big( r( \{n^{1/d}X_i, Z_1,\ldots, Z_k \} ) \in  (a,b], \\
	&\qquad\qquad X_i \neq Y_i \text{ or } Z_j \neq \wt Z_j \text{ for a } j\in \{1,\ldots,k\} \Big) \le C (k+1) k \ |b-a|  \epsilon.
		\end{split}
\end{align}
where $C$ only depends on $d$, $T$, $\|\kappa\|_\infty$ and $\rho$. If the {\v C}ech filtration is used instead, we exchange the factor $C (k+1) k$ by $k^{d+2}$ (multiplied by a certain constant which only depends on $d$, $T$, $\|\kappa\|_\infty$ and $\rho$); see Lemma~\ref{L:ContinuityCech}.

 Consequently, similar to the Poisson case, we obtain as an upper bound  for a single expectation in \eqref{E:FunctionEulerApproximation9}
\begin{align}
		&\sum_{m=1}^{n} \p(N=m) \sum_{k,k'=1}^m \binom{m}{k} \binom{m}{k'}  \p\Big( r( \{n^{1/d}X_i, Z_1,\ldots, Z_k \} ) \in  (a,b], \nonumber \\
		&\qquad\qquad\qquad\qquad\qquad \qquad\qquad X_i \neq Y_i \text{ or } Z_j \neq \wt Z_j \text{ for a } j\in \{1,\ldots,k\} \Big)\nonumber \\
		&\hspace*{2cm}\le C  |b-a| \epsilon \sum_{m=1}^{n} \p(N=m) e^{\wt c m} m^q \label{E:FunctionEulerApproximation10}
\end{align}
for certain constants $q, \wt c,C < \infty$.
The conclusion now follows from a Poissonization argument as the probability of $n^{1/d} X_j$ hitting  $ B( n^{1/d} X_i,\delta) $ is $\alpha n^{-1}$ for a constant $\alpha \in [0 \vee (\inf \kappa - \rho), \sup \kappa + \rho]$. This shows that \eqref{boundE:FunctionEulerApproximation5} (and thus also \eqref{E:FunctionEulerApproximation5}), is at most $C \ T \ J^{-1} \ \epsilon$ for a certain $C < \infty$ which is independent of $i\in [n]$, $n$ and independent* of $\nu$.

It is now straightforward to see that the term in \eqref{E:FunctionEulerApproximation6} is bounded above in a similar fashion by $C n ( T \ J^{-1} \ \epsilon )^2$. We omit the details.
\end{proof}

\subsection{Asymptotic normality}
In order to verify the asymptotic normality, we first show the strong stabilizing property of the EC.
\begin{proposition}[Strong stabilization]\label{P:StrongStabilizingProperty}
Let $t \in [0,\infty)$. Consider the {\v C}ech or the Vietoris-Rips complex $\cK_t( P )$ obtained from a locally finite point cloud $P$. 

Write
$	\Delta(t, P ) = \chi( \cK_t( \{0\} \cup P ) ) - \chi( \cK_t( P ) )$.
Define the radius of stabilization $S\coloneqq 2t$. There is a random variable $\Delta_\infty(t,P)\in\R$ such that 
$$\Delta(t, (P \cap B(0,S) ) \cup A ) = 	\Delta_\infty(t,P)$$ for all finite $A \subseteq \R^d \setminus B(0,S)$.

In particular, the equalities in \eqref{D:DeltaInf} and \eqref{D:DeltaInf2} are true.
\end{proposition}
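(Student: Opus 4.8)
The plan is to read off $\Delta(t,P)$ as a signed count of the simplices that the origin creates, and then to confine those simplices to $B(0,S)$ by an elementary metric estimate. Expanding the Euler characteristic as an alternating sum of simplex counts, every $\sigma\in\cK_t(\{0\}\cup P)$ with $0\notin\sigma$ already lies in $\cK_t(P)$ and conversely, so all such simplices cancel and one is left with
\[
	\Delta(t,P)=\sum_{k\ge 0}(-1)^k\,\#\{\sigma\in\cK_t(\{0\}\cup P):0\in\sigma,\ \dim\sigma=k\}.
\]
Thus it suffices to understand which vertices can appear in a simplex through the origin.

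The locality claim is that any such simplex $\sigma=\{0,x_1,\ldots,x_k\}$ has all of its vertices in $B(0,S)$ with $S=2t$. In the Vietoris--Rips case, $\diam(\sigma)\le t$ immediately gives $\|x_i\|=\|x_i-0\|\le t\le 2t$. In the {\v C}ech case, the balls $B(0,t),B(x_1,t),\ldots,B(x_k,t)$ share a common point $y$, whence $\|x_i\|\le\|x_i-y\|+\|y-0\|\le 2t$. Hence in both filtrations the family of simplices containing $0$, and therefore $\Delta(t,P)$ itself, is a function of $P\cap B(0,S)$ only; since $B(0,S)$ is bounded and $P$ is locally finite, this family is finite and $\Delta(t,P)$ is a well-defined integer.

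I would then set $\Delta_\infty(t,P)\coloneqq\Delta(t,P\cap B(0,S))$. For any finite $A\subseteq\R^d\setminus B(0,S)$ the configuration $(P\cap B(0,S))\cup A$ has exactly the same points inside $B(0,S)$ as $P\cap B(0,S)$, so by the locality claim the simplices through the origin are unchanged and $\Delta(t,(P\cap B(0,S))\cup A)=\Delta_\infty(t,P)$, which is the asserted stabilization. The two ``in particular'' statements follow by specialization. Equation \eqref{D:DeltaInf} is the case $P=\cP$ with $A=\cP\cap(B_n\setminus B(0,S))$, which is a legitimate finite choice as soon as $B(0,S)\subseteq B_n$ (i.e. $n\ge(4t)^d$); the stabilized value is constant in $n$ for large $n$, so the limit exists and equals $\Delta(t,\cP\cap B(0,2t))$, the displayed closed form.

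For \eqref{D:DeltaInf2} the perturbation is not the insertion of one point but the replacement of the points of $\cP$ in the cube $Q(z)$ by those of $\cP'$. Here a simplex contributes to the Euler-characteristic difference only if it has a vertex $x_0\in Q(z)$: any $\sigma$ avoiding $Q(z)$ is a subset of the common point set $(\cP\cap B_n)\setminus Q(z)$ and its membership is governed by an intrinsic geometric condition independent of the configuration, so it cancels. For a surviving $\sigma$, the same diameter/ball-intersection estimate bounds the pairwise distance of its vertices by $2t$, and $\|x_0-z\|\le\sqrt d/2$ (the circumradius of $Q(z)$) then places every vertex within distance $2t+\sqrt d/2\le 2t+\sqrt d$ of $z$; hence the difference depends only on points in $B(z,2t+\sqrt d)$, giving the stated closed form. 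I expect the only genuine bookkeeping obstacle to be precisely this cancellation argument for \eqref{D:DeltaInf2} — verifying that the $Q(z)$-avoiding simplices drop out exactly so that only the localized ones remain; the metric bounds themselves are routine.
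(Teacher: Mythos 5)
Your proof is correct and takes essentially the same route as the paper's: reduce $\Delta(t,P)$ to the signed count of simplices containing the origin, confine their vertices to $B(0,2t)$ via the diameter (Vietoris--Rips) and common-ball (\v Cech) estimates, and handle \eqref{D:DeltaInf2} by the analogous cancellation of $Q(z)$-avoiding simplices together with the $2t+\sqrt{d}$ bound. The paper's proof is terser --- it asserts the locality and records $\Delta_\infty(t,P)$ as the explicit alternating sum $\sum_{k}(-1)^k\sum_{(i_1,\ldots,i_k)}\1{r(0,z_{i_1},\ldots,z_{i_k})\le t}$ over points of $P\cap B(0,S)$ --- but the content is identical; you merely supply details the paper leaves implicit.
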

\begin{proof}
Consider the difference
\begin{align*}
		\Delta(t, P ) &= \chi( \cK_t( \{0\} \cup P ) ) - \chi( \cK_t( P ) ) \\
		&= \sum_{k=0}^\infty (-1)^k \left\{ S_k( \cK_t( \{ 0\} \cup P) ) - S_k (\cK_t(P) ) \right\}
\end{align*}
which is determined by the points inside the $S$-neighborhood of 0, $B(0,S)$. Moreover, let $n$ be the number of points of $P$ in $B(0,S)$, i.e. $P \cap B(0,S) = \{ z_1,\ldots, z_n \}$ for generic points $z_1,\ldots,z_n$. Then $\Delta_\infty(t,P)$ equals
\[
		\Delta_\infty(t,P) = \sum_{k=0}^n (-1)^k \sum_{ (i_1,\ldots,i_k) } \1{ r(0,z_{i_1},\ldots,z_{i_k} ) \le t },
\]
where the second sum is taken over all $k$-tuples $\{i_1,\ldots,i_k\} \subseteq [n]$ such that $i_u \neq i_v$ for all pairs $(u,v), u\neq v$.

Finally, we prove the amendment. It is clear from the above that \eqref{D:DeltaInf} is true. Moreover, noting that a change of the configuration of Poisson points inside a box $Q(z)$ with edge length 1 only affects points inside a $(2t+\sqrt{d})$-neighborhood of $z$, shows \eqref{D:DeltaInf2}.
\end{proof}

\begin{proposition}[Positive variance]\label{P:Positivity}
Let $\alpha(t) = \mathbb{E}[ \Delta_\infty ( t ) ] $, and let $Z$ be a random variable with a bounded density $\kappa.$ Then, using the definition of $\gamma$ in \eqref{E:DefinitionGamma}
\[
			 \E{ \gamma( \kappa(Z)^{1/d} (t,t) )		} - \E{ \alpha( \kappa(Z)^{1/d} t)}^2 > 0,
\]
that is, the limit variances of $(\ol \chi_n(t))_n$ are positive for each $t>0$. 
\end{proposition}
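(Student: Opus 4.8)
The plan is to bypass the explicit formula and bound the pre-limit variance from below, using that the displayed quantity is exactly $\lim_n n^{-1}\V(\chi(\cK_{t,n}))$ in the binomial scheme (the covariance identity recorded in Theorem~\ref{T:FunctionalCLT} at $s=t$). Since the binomial limit variance is the Poisson one minus the nonnegative term $\E{\alpha(\kappa(Z)^{1/d}t)}^2$, it is the smaller of the two, so establishing $\liminf_{n} n^{-1}\V(\chi(\cK_{t,n}))>0$ in the binomial scheme proves positivity of \emph{both} limit variances at once. I will produce a bound $n^{-1}\V(\chi(\cK_{t,n}))\ge c>0$ for all large $n$ by a localized-perturbation argument in the spirit of the classical variance lower bounds for stabilizing functionals.

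First I would fix a side length $L$ with $L/3>2t$ and tile the rescaled window into $\Theta(n)$ congruent cubes $(C_j)$ of side $L$, each carrying a concentric central cube $C_j^{\circ}$ of side $L/3$. Since a simplex of either filtration at the fixed parameter $t$ has diameter at most the radius of stabilization $2t<L/3$ (Proposition~\ref{P:StrongStabilizingProperty}), the central cubes are pairwise separated by more than the interaction range, so no simplex can have vertices in two distinct $C_j^{\circ}$. Assigning every simplex that meets some $C_j^{\circ}$ to the unique such index and all remaining simplices to a background term yields the additive splitting $\chi(\cK_{t,n})=\chi_{\mathrm{out}}+\sum_j\phi_j$, where $\phi_j$ is the alternating count of simplices meeting $C_j^{\circ}$ and hence depends only on the points inside $C_j^{\circ}$ and those within distance $2t$ of it.

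The binomial constraint (fixed total count) is the one genuinely delicate point, and I would handle it by conditioning on the field $\cH$ generated by the cell counts $N_j\coloneqq\#(n^{1/d}\mX_n\cap C_j^{\circ})$ together with the positions of all sample points lying outside $\bigcup_j C_j^{\circ}$. Conditionally on $\cH$ the in-cell configurations are independent across $j$ (each being $N_j$ i.i.d.\ draws from $\kappa$ restricted to the preimage of $C_j^{\circ}$), the term $\chi_{\mathrm{out}}$ is $\cH$-measurable, and each $\phi_j$ is a function of the $j$-th in-cell configuration alone. By the law of total variance and independence, $\V(\chi(\cK_{t,n}))\ge\E{\V(\chi(\cK_{t,n})\mid\cH)}=\sum_j\E{\V(\phi_j\mid\cH)}$. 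To bound a single term I restrict to the $\cH$-measurable good event $A_j\coloneqq\{N_j=2\}\cap\{\text{no sample point in the collar }C_j\setminus C_j^{\circ}\}$: on $A_j$ the two points of $C_j^{\circ}$ interact only with each other, so $\phi_j=2-\1{\text{the two points are joined by an edge}}$ and $\V(\phi_j\mid\cH)=p_j(1-p_j)$, where $p_j$ is the conditional probability that the two points fall within the (\v Cech or Vietoris--Rips) threshold.

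It then remains to check that a positive fraction of cells are good, in that $\p(A_j)$ and $p_j(1-p_j)$ are bounded below uniformly. Since $\kappa$ is a bounded density, $\{\kappa\ge c_0\}$ has positive Lebesgue measure for some $c_0>0$; for cells whose central cube lies in the rescaled image of this set, $N_j$ is asymptotically Poisson with parameter in a fixed compact subset of $(0,\infty)$, giving $\p(A_j)\ge c_2>0$, while the two conditional points are i.i.d.\ with a density bounded below on $C_j^{\circ}$ and $C_j^{\circ}$ has diameter exceeding the threshold, so both ``close'' and ``far'' have probability bounded away from $0$ and $p_j(1-p_j)\ge c_1>0$. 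As there are at least $c_3 n$ such cells, $\V(\chi(\cK_{t,n}))\ge c_1 c_2 c_3\,n$, whence $\liminf_n n^{-1}\V\ge c_1 c_2 c_3>0$ and the claim follows. The main obstacle is the interplay of the fixed-count constraint with the need for genuinely independent local fluctuations: the count-conditioning is precisely what restores cross-cell independence, and some care is required to exhibit, using only boundedness of $\kappa$, a positive-measure region where the elementary two-point ``edge or no edge'' fluctuation is non-degenerate; the additive decomposition of $\chi$ across cells is, by contrast, a direct consequence of the finite interaction range already exploited throughout.
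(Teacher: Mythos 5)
Your route is genuinely different from the paper's, and most of it is correct. The paper argues on the level of the limiting formula itself: for $\kappa\equiv 1$ it invokes non-degeneracy of $\Delta_\infty(t)$ and the Penrose--Yukich argument to get $\gamma(t,t)-\alpha(t)^2=\E{\E{\fD_\infty(t,0)\,|\,\cF_0}^2}-\E{\Delta_\infty(t)}^2>0$, and then disposes of a general bounded $\kappa$ in one line via Jensen's inequality, $\E{\alpha(\kappa(Z)^{1/d}t)}^2\le\E{\alpha(\kappa(Z)^{1/d}t)^2}$, reducing everything to the pointwise positivity of $\gamma(u(t,t))-\alpha(ut)^2$ for $u>0$. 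Your pre-limit bound instead reproves the stabilization variance lower bound from scratch; the tiling with separation $2L/3>2t$, the additive splitting $\chi=\chi_{\mathrm{out}}+\sum_j\phi_j$, the conditioning on counts and outside positions (which does restore cross-cell independence, makes $\chi_{\mathrm{out}}$ and $A_j$ measurable, and yields $\V(\chi(\cK_{t,n}))\ge\sum_j\p(A_j)\,p_j(1-p_j)$ by the law of total variance), and the reduction "binomial limit variance $=$ Poisson limit variance minus a square" are all sound.

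The genuine gap is the final counting step. You claim that positive Lebesgue measure of $F=\{\kappa\ge c_0\}$ produces at least $c_3n$ cells whose central cube lies inside the rescaled image of $F$, with the conditional density bounded below on all of $C_j^\circ$. Positive measure gives neither assertion: take $\kappa=|K|^{-1}\mathds{1}_K$ with $K\subset[0,1]^d$ a fat Cantor set (compact, positive measure, empty interior). Every non-empty super-level set of $\kappa$ equals $K$ and contains no cube at any scale, so your family of good cells is empty for every $n$ and the bound $c_1c_2c_3n$ evaporates, although the proposition is stated (and proved in the paper) for arbitrary bounded densities. The repair is the Lebesgue density theorem: almost every point of $F$ is a density point, so after an Egorov-type uniformization there are $F'\subseteq F$ with $|F'|\ge|F|/2$ and $\delta>0$ such that every cube of side $\le\delta$ meeting $F'$ has $F$-density at least $1-\epsilon$; for large $n$ the $\Theta(n)$ cells meeting $F'$ then satisfy $n\int_{D_j^\circ}\kappa\ge c$, and choosing $\epsilon$ small relative to the fixed relative volume of suitable sub-cubes of $C_j^\circ$ keeps both the ``close'' and ``far'' two-point events bounded away from probability zero even though $\kappa$ may vanish on an $\epsilon$-fraction of the cell. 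A further caveat: since you prove positivity of the \emph{limit} rather than of the formula, you need the variance convergence of Proposition~\ref{P:MVN}, whose proof for non-constant densities invokes assumption \eqref{E:PropertyKappa} (which, incidentally, fails for the fat Cantor example); the paper's Jensen argument establishes the displayed inequality for every bounded $\kappa$ with no such hypothesis.
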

\begin{proof}
We begin with the case of a uniform distribution $\kappa \equiv 1$. Then the limit variance in the binomial sampling scheme equals 
\begin{align}\label{E:Positivity1}
	\gamma(t,t) - \alpha(t)^2 = \E{ \E{ \fD_\infty(t,0)|\cF_0}^2 } - \E{ \Delta_\infty(t) }^2 > 0
\end{align}
for each $t >0$, the positivity follows from the fact that the distribution of  $ \Delta_\infty(t) $, $t>0$, is non degenerate and Penrose and Yukich \cite[Theorem 2.1]{penrose2001central}. (Observe that the second term in the last formula does not occur in the Poisson sampling scheme).

Let now $\kappa$ be a general density. We infer from Proposition~\ref{P:MVN} that the limit variance in the binomial sampling scheme takes the form
\begin{align*}
	&\int_{[0,1]^d}	\gamma( \kappa(x)^{1/d}(t,t) ) \kappa(x) \intd{x} - \Big(\int_{[0,1]^d}\alpha(\kappa(x)^{1/d} t) \kappa(x) \intd{x} \Big)^2 \\
	&\ge \int_{[0,1]^d} \big\{ \gamma( \kappa(x)^{1/d}(t,t) ) - ( \alpha(\kappa(x) ^{1/d} t ) )^2 \big\} \ \kappa(x) \intd{x},
\end{align*}
which is positive for $t >0$ by \eqref{E:Positivity1}.  
\end{proof}

\begin{proof}[Proof of Theorem~\ref{T:NormalApproximation}]
Let $t\in (0,T]$ be arbitrary but fixed. The proof is divided in two parts. First we derive \eqref{E:NormalApproximation}, then \eqref{E:KolmogorovApproximationEuler0}.

\textit{Derivation of \eqref{E:NormalApproximation}.}  Recall the definitions of  $\wt\fD'_n(t,i)$ and $\wt\fD'^A_n(t,i)$  given early in Section~\ref{Section_TechnicalResults}. Let $\mu_{n,t}$ denote the distribution of $\ol\chi_{\nu,n}(t) /\V(\ol\chi_{\nu,n}(t))^{1/2}$, and define 
\[
		S = \frac{1}{2} \sum_{ A \subsetneq [{b_n^\prime}]  } \sum_{ i \notin A } \frac{ \wt\fD'_n(t,i)  \wt\fD'^A_n(t,i) }{ \binom{{b_n^\prime}}{|A|} ({b_n^\prime} - |A|)  }
\quad \text{ and } \quad
		S^| = \frac{1}{2} \sum_{ A \subsetneq [{b_n^\prime}]  } \sum_{ i \notin A} \frac{ \wt\fD'_n(t,i) \ |\wt\fD'^A_n(t,i)| }{ \binom{{b_n^\prime}}{|A|} ({b_n^\prime} - |A|)  }.
\]
Then, using \cite[Theorem 2.2.]{chatterjee2008new} in case of the Kantorovich-Wasserstein distance,  and \cite[Theorem 4.2]{lachieze2017new} for the Kolmogorov distance, respectively, we obtain the following two inequalities: 
\begin{align}\label{E:NormalApproximation1}
	&d_W( \mu_{n,t}, \cN_{0,1} ) \nonumber \\
	&\le \frac{1}{\sigma^2} \V(S)^{1/2} + \frac{1}{2\sigma^3} \sum_{i=1}^{{b_n^\prime}} \E{ | \wt\fD'_n(t,i) |^3 }, \\
	& d_K( \mu_{n,t}, \cN_{0,1} ) \nonumber \\
	\begin{split}\label{E:NormalApproximation2}
	 &\le \frac{1}{\sigma^2} \V(S)^{1/2} + \frac{1}{\sigma^2} \V(S^|)^{1/2} \\
	&\quad + \frac{1}{4 \sigma^3} \sum_{i=1}^{{b_n^\prime}} \E{ | \wt\fD'_n(t,i) |^6 }^{1/2} + \frac{\sqrt{2\pi} }{16 \sigma^3} \sum_{i=1}^{{b_n^\prime}} \E{ | \wt\fD'_n(t,i) |^3 },
	\end{split}
\end{align}
where $\sigma^2 = \V( \chi_{\nu,n}( \cK_t) ).$ It thus remains to bound the terms on the right-hand sides.

Using Fatou's lemma,  $\liminf_{n\to \infty} \V( \ol \chi_{\nu,n} (t) ) \ge c^*$ for some positive $c^*\in\R$ for all $\nu \in B_\infty(\kappa,\rho)$. Here $\rho$ must be sufficiently small, see \eqref{E:NormalApproximation3} below. Moreover, we will see that $c^*$ depends on $t$. Indeed, $\liminf_{n\to \infty} \V( \ol \chi_{\kappa,n} (t) )^{1/2} \ge c_1 > 0$ by Proposition~\ref{P:Positivity}, where $c_1$ depends on $t$. Furthermore, using the result of Theorem~\ref{T:ApproximationEuler}, there is a constant $c_2$ such that $\mathbb{E}[ (\ol \chi_{\kappa,n} (t) - \ol \chi_{\nu,n} (t) )^2 ]^{1/2} \le c_2 \|\kappa-\nu\|_\infty^{1/2}$ whenever $\|\kappa-\nu\|_\infty \le \wt \rho$, for some $\wt \rho>0$ which is fixed. (Note that $c_2$ can be chosen uniformly in $t\in[0,T]$.) Hence, 
\begin{align}
			& \liminf_{n\to \infty} \Big\{ \inf_{\nu \in B_\infty(\kappa,\rho)} \sqrt{\V( \ol \chi_{\nu,n} (t) )} \Big\} \nonumber \\
			&= \liminf_{n\to \infty} \Big\{ \inf_{\nu \in B_\infty(\kappa,\rho) }\sqrt{\E{ \ol \chi^2_{\nu,n} (t) } } \Big\} \nonumber \\
			&\ge \liminf_{n\to \infty} \sqrt{\E{ \ol \chi^2_{\kappa,n} (t) }}  - \Big\{ \inf_{\nu \in B_\infty(\kappa,\rho) }\sup_{n\in\N} \sqrt{\E{ (\ol \chi_{\kappa,n} (t) - \ol \chi_{\nu,n} (t) )^2 }} \Big\} \nonumber \\
			& \ge  c_1 - c_2 \rho^{1/2} \label{E:NormalApproximation3}
\end{align}
which is positive if $\rho$ is sufficiently small. This implies, $\sigma^2 \ge c^* n$ for all but finitely many $n$ and uniformly in $\nu \in B_\infty(\kappa, \rho)$ for some  $c^* > 0$, which depends on $t$.

Furthermore,  Lemma~\ref{L:BoundedMomentsCondition} says that, for each $p\in\N$, $\mathbb{E}[ | \wt\fD'_n(t,i) |^p ]$ is bounded above uniformly over all $n$, $i\in [{b_n^\prime}] $ and $\nu \in B_\infty(\kappa,\wt \rho)$. Hence given $t$, the second term in \eqref{E:NormalApproximation1} and the third and fourth term in \eqref{E:NormalApproximation2} are of order ${b_n^\prime} / n^{3/2}$ which is of order $n^{-1/2}$. It remains to obtain bounds for $\V(S)$ and $\V(S')$. We only study $\V(S)$ in detail. The calculations will show that $\V(S')$ admits a very similar upper bound.
\begin{align}\begin{split}\label{E:CovarianceT}
		\V(S) &= \frac{1}{4} \sum_{i\in [{b_n^\prime}]}  \sum_{i ' \in [{b_n^\prime}]} \sum_{ A \subsetneq [{b_n^\prime}]: A \not\ni i  } \sum_{ A' \subsetneq [{b_n^\prime}]: A'\not\ni i'  } \\
		&\qquad\qquad\qquad  \frac{ \cov( \wt\fD'_n(t,i) \wt\fD'^{A}_n(t,i) , \wt\fD'_n(t,i') \wt\fD'^{A'}_n(t,i') ) }{  \binom{{b_n^\prime}}{|A|} ({b_n^\prime} - |A|) \binom{{b_n^\prime}}{|A'|} ({b_n^\prime} - |A'|) },
\end{split}\end{align}
where all covariances are uniformly bounded by Lemma~\ref{L:BoundedMomentsCondition}.

First consider the Poisson case. Clearly, $\wt\fD'_n(t,i), \wt\fD'^A_n(t,i)$ both only involve Poisson points in a $\delta$-neighborhood of $z_{n,i}$ and $\delta$ does not depend on $t, A, z_{n,i}$, see also Proposition 4.5. Consequently, exploiting the independence property of the Poisson process, for a given $i$, the number of indices $i'$ such that the covariances in \eqref{E:CovarianceT} are non zero does not depend on $n$ and is bounded above by some constant. Moreover, a combinatorial argument shows that
\[
	\sum_{A\subsetneq [{b_n^\prime}], A \not\ni i } \frac{1}{ \binom{ {b_n^\prime}}{|A|} ({b_n^\prime} - |A| ) } \equiv 1. 
\]
Hence, $\V(S)$ is of order ${b_n^\prime}$. This completes the calculations in the Poisson case.

Now we bound (\ref{E:CovarianceT}) in the binomial case. If $i= i'$, then the remaining double sum is bounded above by a constant. To see this we use Cauchy-Schwarz and the boundedness of the variances together with the previous displayed formula. If $i\neq i'$, then we need to consider the covariances between simplices in a $\delta$-neighborhood around $X_i$, $X'_i$ and $X_{i'}$, $X'_{i'}$. The covariance is only non zero if the distance between $\{X_i, X'_i\}$ and $\{X_{i'}, X'_{i'}\}$ is at most $\delta$. This happens with a probability proportional to $n^{-1}$. This shows once more that $\V(S)$ is of order $n$ and establishes the claim in the case of a binomial sampling scheme. This demonstrates \eqref{E:NormalApproximation}.

\textit{Derivation of \eqref{E:KolmogorovApproximationEuler0}} is now straightforward. Letting $\cN_{0,1}(\cdot)$ denote the law of the standard normal distribution, write
\begin{align}
		d_K( \ol \chi_{\nu,n} (t), \ol \chi_{\kappa, n} (t) ) &\le d_K\Big( \ol \chi_{\nu,n} (t) / \sqrt{\V( \ol \chi_{\nu,n} (t) )}, \cN_{0,1} \Big) \label{E:KolmogorovApproximationEuler1} \\
		\begin{split}\label{E:KolmogorovApproximationEuler2} 
		&\quad + d_K\Big(  \cN_{0,1} \big( \cdot/ \sqrt{\V( \ol \chi_{\nu,n} (t) )} \big) , \\
		&\qquad\qquad\qquad \cN_{0,1} \big( \cdot/ \sqrt{\V( \ol \chi_{\kappa,n} (t) )} \big) \Big)
		\end{split}   \\
		&\quad + d_K \Big( \cN_{0,1},  \ol \chi_{\kappa,n} (t) / \sqrt{\V( \ol \chi_{\kappa,n} (t) )}  \label{E:KolmogorovApproximationEuler3}\Big).
\end{align}
If  $\nu \in B_\infty(\kappa,\rho)$, where $\rho$ is selected as above, the previous results regarding the normal approximation show that \eqref{E:KolmogorovApproximationEuler1} and \eqref{E:KolmogorovApproximationEuler3} attain the rate $C n^{-1/2}$, which is uniform in $\nu \in B_\infty(\kappa,\rho)$ for a given $t\in [0,T]$. 

Regarding \eqref{E:KolmogorovApproximationEuler2}, we use the following continuity result of the standard normal distribution
\[
		\sup_{u\in\R} | \cN_{0,1}( a + bu ) - \cN_{0,1}( u ) | \le |a| + |b| \vee (|b|^{-1}) -1.
\]
Thus, we are left to study the quotients
\begin{align}\label{E:KolmogorovApproximationEuler4}
	\left|		\sqrt{\frac{ \V( \ol \chi_{\nu,n} (t) ) }{ \V( \ol \chi_{\kappa,n} (t) ) }} - 1 \right | \text{ and }  \left|		\sqrt{\frac{ \V( \ol \chi_{\kappa,n} (t) ) }{ \V( \ol \chi_{\nu,n}  (t) ) }} - 1 \right | .
\end{align}
Given two centered square integrable random variables $Z_1,Z_2$, we have by the reverse triangle inequality
\begin{align*}
	|\E{ Z_1^2}^{1/2} - \E{ Z_2^2}^{1/2}| &\le \E{ (Z_1 - Z_2)^2 }^{1/2} .
\end{align*}
Thus, recalling Theorem~\ref{T:ApproximationEuler},
$		 | \V( \ol \chi_{\nu,n} (t) )^{1/2} - \V( \ol \chi_{\kappa,n} (t) )^{1/2}  | 		\le \sqrt{C_{0,\kappa}} \| \nu - \kappa \|_\infty^{1/2}$ 
uniformly over $\nu \in B_\infty(\kappa,\wt \rho)$. Using \eqref{E:NormalApproximation3}, we see that both terms in \eqref{E:KolmogorovApproximationEuler4} are of order $ \| \nu - \kappa \|_\infty^{1/2}$ uniformly over $\nu \in B_\infty(\kappa,\rho)$ for some $\rho >0$ small enough given $t$. This completes the proof.
\end{proof}

\begin{proof}[Proof of Theorem~\ref{T:FunctionalCLT}]
Convergence of the finite dimensional distributions is shown in Proposition~\ref{P:MVN}. In the subsequent propositions, we show stochastic equicontinuity (Proposition~\ref{P:Tightness}) and the H{\"o}lder continuity of the sample paths (Proposition~\ref{P:ContModi}).
\end{proof}

\begin{proposition}[Tightness]\label{P:Tightness}
Consider a Poisson or a binomial sampling scheme and assume the conditions of Theorem~\ref{T:FunctionalCLT}. The family of probability measures $\{ \cL(	(\ol \chi_n(t): t\in [0,T])	) : n\in\N \}$ is tight in the Skorohod $J_1$-topology.
\end{proposition}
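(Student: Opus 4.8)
The plan is to verify the classical moment criterion for tightness in the Skorohod $J_1$-topology (Billingsley, \emph{Convergence of Probability Measures}, Theorem~13.5): since $\ol\chi_n(0)\equiv 0$, it suffices to produce a nondecreasing continuous function $F$ on $[0,T]$ and a constant $C<\infty$, both independent of $n$, such that
\begin{align*}
		\E{ \big| \ol\chi_n(t) - \ol\chi_n(s) \big|^2 \ \big| \ol\chi_n(u) - \ol\chi_n(t) \big|^2 } \le C\,\big( F(u) - F(s) \big)^2
\end{align*}
for all $0 \le s \le t \le u \le T$. Guided by Lemmas~\ref{L:ContinuityVR} and~\ref{L:ContinuityCech}, I would take $F(t) = \alpha_d \| \kappa \|_\infty\, t^d$ for the Vietoris--Rips filtration and $F(t) = \int_0^t g^*_d(v)\intd{v}$ for the {\v C}ech filtration, so that in either case the probability that a fixed simplex has filtration time in $(a,b]$ is bounded by a constant times $F(b) - F(a)$; both functions are continuous and nondecreasing. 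With $\gamma = 1$ and $\alpha = 1 > 1/2$ the criterion then applies after absorbing $C$ into $F$, and both the Poisson and the binomial scheme (and both complexes) are treated in parallel using the couplings and martingale structure set up in Section~\ref{Section_TechnicalResults}.

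To obtain the displayed bound I would exploit that a time increment is a signed count of the simplices entering the filtration on the corresponding interval. Writing $\Delta_{(s,t]} \coloneqq \ol\chi_n(t) - \ol\chi_n(s)$, we have $\Delta_{(s,t]} = n^{-1/2}\big( \Xi_{(s,t]} - \E{\Xi_{(s,t]}} \big)$ with
\begin{align*}
		\Xi_{(s,t]} = \sum_{k\in\N_0} (-1)^k\, \#\{ \sigma \in \cK_{T,n} : \dim\sigma = k,\ r(\sigma) \in (s,t] \}.
\end{align*}
As in the proofs of Theorem~\ref{T:ApproximationEuler} and Theorem~\ref{T:FunctionalWasserstein}, I would decompose $\Delta_{(s,t]} = n^{-1/2} \sum_{i=1}^{{b_n^\prime}} D_{n,i}^{(s,t]}$ into martingale differences with respect to the filtration $\cG_{n,\cdot}$ of \eqref{DefSigmaField}. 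Each $D_{n,i}^{(s,t]}$ depends only on the configuration inside the $\delta$-neighborhood of the cube $Q(z_{n,i})$ and is nonzero only when some simplex meeting that neighborhood enters on $(s,t]$.

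The core of the argument consists of two estimates. First, dominating $|D_{n,i}^{(s,t]}|$ by the number of relevant simplices---which has bounded moments of all orders by Lemma~\ref{L:BoundedMomentsCondition}---and invoking Lemmas~\ref{L:ContinuityVR}/\ref{L:ContinuityCech}, I would show the per-cell bound $\E{ |D_{n,i}^{(s,t]}|^2 } \le C\,(F(t)-F(s))$ uniformly in $n,i$; the dependence is linear (not a square root) because the governing event $\{ r(\sigma) \in (s,t] \}$ is rare, so all moments of the relevant simplex count are of order $F(t)-F(s)$, exactly as in the computation culminating in \eqref{E:FunctionEulerApproximation10}. Second, I would expand $\E{ \Delta_{(s,t]}^2\, \Delta_{(t,u]}^2 }$ as a fourfold sum over cells and use the martingale-difference property together with the finite interaction range $\delta$: a quadruple contributes only if its four cells form at most two $\delta$-connected clusters. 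The dominant contribution comes from two well-separated clusters, which factorizes and, by Cauchy--Schwarz and the per-cell bound, is at most $C\,(F(t)-F(s))(F(u)-F(t))$; the fully connected quadruples number only $O(n)$ and, after the prefactor $n^{-2}$, contribute an additional factor $O(n^{-1})$, so they are negligible. Since $(F(t)-F(s))(F(u)-F(t)) \le (F(u)-F(s))^2$, this yields the required inequality.

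The main obstacle is the combinatorial bookkeeping of this fourfold sum---verifying that only two-cluster quadruples survive and that the surviving covariances $\E{ D_{n,i}^{(s,t]} D_{n,j}^{(s,t]} }$ over $\delta$-connected cells are again of order $F(t)-F(s)$ rather than its square root. This is handled by the same cell-by-cell separation and, in the binomial scheme, the same Poissonization device already used in the proof of Theorem~\ref{T:FunctionalWasserstein}; the continuity lemmas are precisely what translate the event ``a simplex enters on $(s,t]$'' into the analytic factor $F(t)-F(s)$, and the {\v C}ech case differs from the Vietoris--Rips case only through the replacement of $t \mapsto t^d$ by the integral of $g^*_d$.
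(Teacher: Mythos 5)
There is a genuine gap, and it sits exactly where you wave your hands: the ``fully connected'' quadruples. The moment criterion you invoke (Billingsley's Theorem 13.5 / Bickel--Wichura) must hold for \emph{all} $0\le s\le t\le u\le T$, including intervals much shorter than $1/n$. For a single $\delta$-cluster, the event that some simplex enters $(s,t]$ \emph{and} some overlapping simplex of the same cluster enters $(t,u]$ (e.g.\ an edge and a triangle containing it) is controlled by Lemmas~\ref{L:ContinuityVR}/\ref{L:ContinuityCech} only through a \emph{univariate} bound: dropping one of the two constraints gives $C\min(F(t)-F(s),\,F(u)-F(t))$, not the product. Summing over the $O(n)$ connected quadruples and multiplying by $n^{-2}$ then yields a term of order $n^{-1}\,(F(t)-F(s))$, which is \emph{not} bounded by $C(F(u)-F(s))^2$ when $F(u)-F(t)\ll n^{-1}$; so the criterion, as you state it, fails off a grid of mesh $\asymp 1/n$. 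To make your route work you would need a genuinely \emph{bivariate} continuity lemma, namely $\p\bigl(r(\sigma_1)\in(s,t],\,r(\sigma_2)\in(t,u]\bigr)\le C\,|t-s|\,|u-t|$ for overlapping simplices $\sigma_1,\sigma_2$, which neither the paper's lemmas provide nor your argument supplies (and which is delicate for the \v Cech filtration, where filtration times of nested simplices are strongly coupled).

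The paper circumvents precisely this obstacle with an extra step you are missing: it first splits $\chi$ into the even- and odd-dimensional simplex counts $\Sigma_{1,n},\Sigma_{2,n}$ --- each of which, unlike $\chi$ itself, is \emph{monotone} in $t$ --- and uses this monotonicity together with the bound $\E{\Sigma_{i,n}(t)}-\E{\Sigma_{i,n}(s)}\le C$ for $|t-s|\le T/n$ to reduce the modulus of continuity to the grid $\Gamma_n$ of mesh $T/n$, at the price of a uniformly negligible additive error $C/\sqrt n$. Only \emph{on the grid}, where interval lengths are bounded below by $T/n$, is the Bickel--Wichura moment condition verified; there the offending cluster term $n^{-1}|t-s|$ is legitimately absorbed into $C\,|s-r|\,|t-s|$. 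Your signed quantity $\Xi_{(s,t]}$ is not monotone, so this grid-reduction device is not available to you as written; you would need both the even/odd splitting and the reduction step (or, alternatively, the bivariate lemma above) to close the argument.
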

\begin{proof}[Proof of Proposition~\ref{P:Tightness}]
The tightness will be established with \cite[Theorem 3]{bickel1971convergence} (which is an extension of \cite[Theorem 15.6]{billingsley1968convergence}) and the remark following this theorem. Let $I_1$ be the even integers in $\N_0$ and let $I_2$ be the odd integers in $\N$. Define the two c{\` a}dl{\` a}g processes
$$
	[0,T]\ni t \mapsto	\Sigma_{i,n}(t) =  \sum_{q\in I_i }  S_q(\cK_{t,n})
$$
for $n\in\N$ and $i\in\{1,2\}$. Then
$$
 | \ol\chi_n(t) | \le \frac{1}{\sqrt{n}}\sum_{i=1,2} \Big| \Sigma_{i,n}(t) -  \E{ \Sigma_{i,n}(t) } \Big |.
$$
and
$$
 | \ol \chi_n(s) - \ol \chi_n(t)  | \le \frac {1}{\sqrt{n}}\sum_{i=1,2} \Big| \Sigma_{i,n}(s)   - \Sigma_{i,n}(t) -  \E{\Sigma_{i,n}(s)  - \Sigma_{i,n}(t) } \Big |.
$$
Hence, $( n^{-1/2} (\chi_n(\cdot) - \E{\chi_n(\cdot) }) )_{n\ge 1}$ is tight if $( n^{-1/2} (\Sigma_{i,n}(\cdot) - \E{\Sigma_{i,n}(\cdot) }) )_{n\ge 1}$ is for both $i=1$ and $i=2$ (we refer to \cite{billingsley1968convergence} for classical tightness results in terms of the modulus of continuity).

Consequently, the rest of the proof is dedicated to verify the tightness of the sequence $( n^{-1/2} (\Sigma_{i,n}(\cdot) - \E{\Sigma_{i,n}(\cdot) }) )_{n\ge 1}$ for $i\in\{1,2\}$. We proceed in two parts. Using the amendment of the main theorem in \cite{bickel1971convergence}, we first show that it is sufficient to compute the modulus of continuity of the processes $( n^{-1/2} (\Sigma_{i,n}(\cdot) - \E{\Sigma_{i,n}(\cdot) }) )_{n\ge 1}$ on a reduced grid $\Gamma_n$ of $(n+1)$ equidistant points on $[0,T]$. In the second part, we verify the moment condition. The calculations are essentially the same for the binomial and the Poisson sampling scheme. 

\textit{Part 1.} 
We show that for $i\in\{1,2\}$
\begin{align}\begin{split}\label{E:ModulusContinuity1}
		 &\sup_{ \substack{s,t\in [0,T] } } \frac{1}{\sqrt{n}}\Big| \Sigma_{i,n}(s)   - \Sigma_{i,n}(t) -  \E{\Sigma_{i,n}(s)  - \Sigma_{i,n}(t) } \Big | \\
		& \le  \sup_{ \substack{s,t\in \Gamma_n } } \frac{1}{\sqrt{n}}\Big|  \Sigma_{i,n}(s)   - \Sigma_{i,n}(t) -  \E{\Sigma_{i,n}(s)  - \Sigma_{i,n}(t) } \Big |+ \frac{C}{\sqrt{n}},
\end{split}\end{align}
for some constant $C\in\R_+$ not depending on $n$. To see this, let $0 \le s \le t \le T$ and $\underline s, \ol s, \underline t, \ol t \in \Gamma_n$ with $\underline s \le s \le \ol s,\,\underline t \le t \le \ol t$ and $|\ol t - \underline t| \le T/n, |\ol s - \underline s| \le T/n$. Then, due to  monotonicity of $s \mapsto \Sigma_{i,n}(s)$ ,
\begin{align*}
\big(\Sigma_{i,n}(t) -& \E{\Sigma_{i,n}(t)} \big) - \big(\Sigma_{i,n}(s) - \E{\Sigma_{i,n}(s)} \big) \\
& \le \big(\Sigma_{i,n}(\ol t) - \E{\Sigma_{i,n}(\ol t)} \big) - \big(\Sigma_{i,n}(\underline s) - \E{\Sigma_{i,n}(\underline s)} \big) \\
&\quad + \big( \E{\Sigma_{i,n}(\ol t)} - \E{\Sigma_{i,n}(\underline t)}\big) - \big( \E{\Sigma_{i,n}(\ol s)} - \E{\Sigma_{i,n}(\underline s)}\big).
\end{align*}
This gives together with a similar lower bound
\begin{align*}
		 &\sup_{ \substack{s,t\in [0,T] } } \frac{1}{\sqrt{n}}\Big| \Sigma_{i,n}(s)   - \Sigma_{i,n}(t) -  \E{\Sigma_{i,n}(s)  - \Sigma_{i,n}(t) } \Big | \\
		&\hspace*{1cm} \le  \sup_{ \substack{s,t\in \Gamma_n } } \frac{1}{\sqrt{n}}\Big|  \Sigma_{i,n}(s)   - \Sigma_{i,n}(t) -  \E{\Sigma_{i,n}(s)  - \Sigma_{i,n}(t) } \Big | \\
		&\hspace*{1cm} \quad+ 2 \sup_{s,t \in \Gamma_n \atop |s - t| \le T/n} \big( \E{\Sigma_{i,n}(t)} - \E{\Sigma_{i,n}(s)} \big).
\end{align*}
It remains to show that the last summand can be bounded by a constant $C > 0$. To this end, let $0\le s \le t \le T$ with a distance of at most $T/n$. Then 
\begin{align}
		&\E{\Sigma_{i,n}(t)} - \E{\Sigma_{i,n}(s)} \nonumber \\
		\begin{split}\label{E:ModulusContinuity2}
		&= \sum_{q \in I_i }  \E{ S_q( \cK_{t,n}) -  S_q( \cK_{s,n})  } \\
		& \le \sum_{i=1}^{{b_n^\prime}} \sum_{q\in \N_0} \E{ \sum_{\substack{ \sigma\in \cK_{t,n}, \dim \sigma = q}} \1{ \sigma\cap Q(z_{n,i}) , r(\sigma)\in (s,t] } }.
\end{split}\end{align}
Let $M=M_{n,i}$ be the number of points of $\cP(n)$ resp.\ $n^{1/d}\mX_n$ lying in $Q(z_{n,i})^{(\delta)}$ with $\delta = 2T$. Using Lemmas~\ref{L:ContinuityVR} and \ref{L:ContinuityCech}, we have conditional on $M$ 
\begin{align*}
		&\E{ \sum_{\substack{ \sigma\in \cK_{t,n}, \dim \sigma = q}} \1{ \sigma\cap Q(z_{n,i}) , r(\sigma)\in (s,t] } } \\
		&\le \1{q\le M-1} \binom{M}{q+1} C_q |t -s|
\end{align*}
for a constant $C_q$ which is uniform in $n$, $i\in[{b_n^\prime}]$ and $s,t$, moreover $C_q$ is independent* of $\nu$ and satisfies $C_q \le c e^{c q}$. In the Poisson sampling scheme, one uses that $M$ has a Poisson distribution with a parameter $\lambda\in\R_+$ that is uniformly bounded above in $n$ and $i\in[{b_n^\prime}]$. Then using Lemma~\ref{L:MomentsEuler}, we find that the right-hand side of \eqref{E:ModulusContinuity2} is of order
\begin{align*}
		&\sum_{i=1}^{{b_n^\prime}} \sum_{m=0}^{\infty} \p( M_{n,i} = m ) \sum_{q=0}^{m-1} \binom{M}{q+1} e^{c q } |s-t| \\
		&\le C \sum_{m=0}^{\infty} e^{-\lambda} \frac{ \lambda^m }{m!} \sum_{q=0}^{m-1} 2^q e^{cq} m^c \ {b_n^\prime} |s-t| \\
		&\le C \sum_{m=0}^{\infty} e^{-\lambda} \frac{ \lambda^m }{m!} \ \Big( 2^m e^{cm} m^c \Big) \ {b_n^\prime} \ |s-t| \\
		&\le C {b_n^\prime} |s-t|.
\end{align*}
Since $|s-t| \le T/n$ and ${b_n^\prime} / n \to 1$, we see that, in the Poisson sampling scheme, $\E{ |\Sigma_{i,n}(s)  - \Sigma_{i,n}(t)| }$ is bounded above by a constant uniformly in $s,t$ with $|s-t| \le T/n$ and $n\in\N$.

It is a standard routine to verify the same statement for the binomial sampling scheme, using the fact that in this case $M_{n,i}$ tends to a Poisson distribution. This concludes the proof of \eqref{E:ModulusContinuity1}.


\textit{Part 2.} We verify the moment condition from Bickel and Wichura (1971) for the maps 
$$
	\Gamma_n \ni t \mapsto n^{-1/2} \sum_{q\in I } S_q(\cK_{t,n}) - \E{ S_q(\cK_{t,n}) },
$$
where $I\in \{ I_1, I_2 \}$. We write $\Delta_{q,n}(s,t)$ for $S_q(\cK_{t,n})  -  S_q(\cK_{s,n})$. Let $0 \le r \le s \le t \le T$ be elements in $\Gamma_n$. We show
\begin{align}\begin{split}\label{E:MomentCondition1}	
		 &n^{-2} \ \mathbb{E} \Big[\Big( \sum_{q\in I } \Delta_{q,n}(r,s) - \E{ \Delta_{q,n}(r,s) } \Big)^2  \\
		 &\qquad\qquad\qquad \Big( \sum_{q\in I } \Delta_{q,n}(s,t) - \E{ \Delta_{q,n}(s,t) } \Big)^2   \Big] \\
		 &\le C |s-r| |t-s|
\end{split}\end{align}
for a constant $C$ not depending on $n$ and $r,s,t \in \Gamma_n$, $r\le s \le t$.

First, for any $s \le t$, we rewrite $\Delta_{q,n}(s,t) - \E{ \Delta_{q,n}(s,t) }$ as a martingale difference sequence
\[
		\Delta_{q,n}(s,t) - \E{ \Delta_{q,n}(s,t) } = \sum_{i=1}^{{b_n^\prime}} \E{ \Delta_{q,n}(s,t) - \wt \Delta_{q,n,i}(s,t) | \cG_{n,i} },
\]
where the filtration $(\cG_{n,i}: i=1,\ldots,{b_n^\prime} )$ is introduced at the beginning of Section~\ref{Section_TechnicalResults} and where $\wt \Delta_{q,n,i}(s,t) = S_q( \cK'_{t,n,i} ) - S_q( \cK'_{s,n,i} )$. With these abrevations the left-hand side of \eqref{E:MomentCondition1} equals
\begin{align}\begin{split}\label{E:MomentCondition2}
		&\sum_{i,j,k,\ell \in {\rm DM}} \sum_{q_1,\ldots,q_4 \in I } \  n^{-2}  \mathbb{E} \Big[  \E{ \Delta_{q_1,n}(s,t) - \wt \Delta_{q_1,n,i}(s,t) \ | \ \cG_{n,i} } \\
		&\quad\qquad\qquad\qquad\qquad\qquad  \E{ \Delta_{q_2,n}(s,t) - \wt \Delta_{q_2,n,j}(s,t) ) \ | \ \cG_{n,j} }\\
		&\quad\qquad\qquad\qquad\qquad\qquad \E{ \Delta_{q_3,n}(r,s) - \wt \Delta_{q_3,n,k}(r,s) \ | \ \cG_{n,k} } \\
		&\quad\qquad\qquad\qquad\qquad\qquad   \E{ \Delta_{q_4,n}(r,s) - \wt \Delta_{q_4,n,\ell}(r,s) \ | \ \cG_{n,\ell} } \Big], 
\end{split}\end{align}	
where the outer summation is extended over DM, the set of quadruples with "(at least a) double maximum" meaning the set  of elements in $[b'_n]^4$ for which the greatest index appears at least twice. All other index combinations have expectation 0 and thus do not enter the sum. We divide the remainder of the proof in two steps.

\textit{Step 1.} We show that we can reduce the sum over ${\rm DM}$ to a few smaller sums, each of which is extended over only $O((b'_n)^2)$ indices, and one sum over three indices (of the order $O(b'_n)^3$ indices), but with an extra factor of $n^{-1}$. This then enables us to derive the desired bounds, which is indicated in Step 2 below. 

Due to the symmetry of the situation, it is enough to study the subcases of (\ref{E:MomentCondition2}) with $i < j < \ell = k$ and $i < k < \ell = j$. In these cases the summands in \eqref{E:MomentCondition2} have the structure $\mathbb{E}[ F( W, V ) G(W, V, R ) ]$, where $W$ are the observations with index less than $i$, $V$ are the observations with index $i$ and $R$ are the observations larger than $i$. To be more precise, the first conditional expectation in (\ref{E:MomentCondition2}) is $F(W,V)$ with $W = \cP(n) \cap (Q(z_{n,1} \cup \cdots \cup Q(z_{n,i-1})$, or  $W = (X_1,\ldots,X_{i-1})$, respectively, and $V = \cP(n) \cap Q(z_{n,i})$, or  $V = X_i$, respectively. The last three conditional expectations in (\ref{E:MomentCondition2}) are not only functions of $V$ and $W$, but also of  $R = \cP(n) \cap (Q(z_{n,i+1}) \cup \cdots \cup Q(z_{n,\ell})$,  or  $R = (X_{i+1},\ldots,X_{\ell})$, respectively. 

Clearly, $W,V,R$ are independent. Hence, if we omit the variable $V$ as input in the second factor, we see by using an independence argument that $\mathbb{E}[ F(W,V) G(W,R) ] = 0$ because $\mathbb{E}[F(w,V)] = 0$ for almost every realization $w=W$. It thus suffices to study the difference $ F(W,V) (G(W,V,R) - G(W,R) )$. To make this more clear, let
\begin{align*}
		\Delta'_{q,n,i}(s,t) &= 
		\begin{cases}
			S_q( \cK_t (\cP(n) \setminus Q(z_{n,i}) ) ) , &\qquad \quad \text{Poisson sampling,} \\
			\qquad\quad - S_q( \cK_s (\cP(n) \setminus Q(z_{n,i}) ) ) \\
			S_q( \cK_t ( n^{1/d} [ \mX_n \setminus \{X_i \} ] ) ), & \qquad \quad \text{binomial sampling,} \\
			\qquad\quad  - S_q( \cK_s ( n^{1/d} [ \mX_n \setminus \{X_i \} ] ) )
		\end{cases} \\
	\wt  \Delta'_{q,n,j,i}(s,t) ) &=
	\begin{cases}  S_q( \cK_t( \{ [\cP(n) \setminus Q(z_{n,j})] \\
	\qquad \cup [ \cP'(n) \cap Q(z_{n,j})]  \} \setminus Q(z_{n,i}) ) ) \\
       \quad - S_q( \cK_s( \{ \ [\cP(n) \setminus Q(z_{n,j})] \\
       \qquad \cup [ \cP'(n) \cap Q(z_{n,j})] \ \} \setminus Q(z_{n,i}) ) ), & \ \text{Poisson sampling,} \\
S_q( \cK_t( \{  n^{1/d} [\{X'_j\} \cup (\mX_n \setminus \{X_i, X_j\}) ] )   ) \\
\quad - S_q( \cK_s( \{  n^{1/d} [\{X'_j\} \\
\qquad \cup (\mX_n \setminus \{X_i, X_j\}) ] )   ),  & \ \text{binomial sampling}.
\end{cases}
 \end{align*}
Let us consider the case $i < j < k = \ell$, and set  
$$
	D(q_1,i) = \E{ \Delta_{q_1,n}(s,t) - \wt \Delta_{q_1,n,i}(s,t) \ | \ \cG_{n,i} }
$$
 and $D'(q_1,j,i) = \E{ \Delta'_{q_1,n,i}(s,t) - \wt \Delta'_{q_1,n,j,i}(s,t) \ | \ \cG_{n,i} }.$ Similarly we define $D(q_2,i)$ and $D'(q_2,j,i)$, while  $D(q_3,\ell),$ and $D'(q_3,\ell,i)$ are defined with $(s,t)$ replaced by $(r,s)$, and the same holds for $D(q_4,\ell),$ and $D'(q_4,\ell,i)$.

Using this notation, the expression $F(W,V) (G(W,V,R) - G(W,R) )$ is of the form
 \begin{align*}
 {\mathbb E}\Big[ D(q_1,i) \big[D(q_2,j) D(q_3,\ell) D(q_4,\ell) - D'(q_2,j,i)D(q_3,\ell,i) D(q_4,\ell,i)\big]\Big]
 \end{align*}
and with $D(q_3,q_4,\ell) = D(q_3,\ell) D(q_4,\ell)$ and $D'(q_3,q_4,\ell,i) = D'(q_3,\ell,i) D'(q_4,\ell,i)$, straightforward calculations show that this can be written as
%
%
%
%
\begin{align*}
&{\mathbb E}\Big[ D(q_1,i)  \big[D(q_2,j) - D'(q_2,j,i)\big] D(q_3,q_4,\ell)\Big]\\
& \hspace*{1cm}+ {\mathbb E}\Big[ D(q_1,i)  \big[D(q_3,\ell) - D'(q_3,\ell,i)\big] D(q_4,\ell)D'(q_2,j,i)\Big]\\
&\hspace*{2cm} + {\mathbb E}\Big[ D(q_1,i) \big[D(q_4,\ell) - D'(q_4,\ell,i)\big] D'(q_3,\ell,i)D'(q_2,j,i)\Big].
 \end{align*}
 Each of the last three summands involves a factor of the form 
 \begin{align}
 \begin{split}\label{E:MomentCondition1b}
 &D(q,j) - D'(q,j,i) \\
 &= \E{ \Delta_{q,n}(s,t) - \wt \Delta_{q,n,i}(s,t)  - \Delta'_{q,n,i}(s,t) - \wt \Delta'_{q,n,j,i}(s,t)\ | \ \cG_{n,i}} 
\end{split} \end{align}
 for some $q$ and $j \ne i$ (note that $(r,s)$ can also take the role of $(s,t)$ here). We now study this difference inside this conditional expectation.
In the Poisson case, we write $ i  \simeq j$ for $Q(z_{n,i})^{(\delta)} \cap  Q(z_{n,j})^{(\delta)} \neq \emptyset$, and $i \not\simeq j$ otherwise. In the binomial sampling scheme, the notation $i \simeq j$ (resp. $\not\simeq$) simply means $i= j$ (resp. $\ne$). 

Using this notation, one finds that  in the Poisson sampling scheme \eqref{E:MomentCondition1b} is only non zero if $i \simeq j$ because both $ \Delta_{q,n}(s,t) - \wt \Delta_{q,n,j}(s,t) ) $ and $  \Delta'_{q,n,i}(s,t) - \wt \Delta'_{q,n,j,i}(s,t) )  $ only involve $q$-simplices with filtration times in $(s,t]$ which intersect with $Q(z_{n,j})$; and these are identical if $i\not\simeq j$. So the sum over three indices in \eqref{E:MomentCondition1} reduces to a sum over essentially two indices only; more precisely, the number of summands is of the order $O\big((b'_n)^2\big)$.

If $i \simeq j$ in the binomial sampling, then we obviously have a sum over two indices only, and \eqref{E:MomentCondition1b} consists of the $q$-simplices containing an element of $\{X_j,X'_j\}$. If $i\not\simeq j$, \eqref{E:MomentCondition1b} consists of $q$-simplices containing and element of both $\{X_j,X'_j\}$ and $\{ X_i, X'_i\}$, and crucially, this event is of order $n^{-1}$. So the latter sum, which is a sum over three indices, has an additional correction factor $n^{-1}$. With these insights and the techniques presented in the next step, it is straightforward to verify the claim for these subcases. 

Similar arguments hold in the case $i < k < j = \ell$. We omit further details in this step and continue with Step 2.

\textit{Step 2.} After having reduced the sums in Step 1, we now go back to (\ref{E:MomentCondition2}) and study this sum in the reduced settings. We verify the claim for the index combinations containing two pairs or one triple, so that the relevant index set has order $(b_n^\prime)^2$. Due to the symmetry of the situation, it is sufficient to study (a) $i=j$, $k = \ell$, (b) $i=k$, $j=\ell$ and (c) $i< j=k =\ell$. So, we have (at most) two indices only in each subcase (a) to (c); we write $i$ and $\ell$ for these.

The difference $\Delta_{q,n}(s,t) - \wt \Delta_{q,n,i}(s,t)$ consists only of simplices in a $\delta$-neighborhood of $Q(z_{n,i})$, or in a $\delta$-neighborhood of $X_i$ or $X'_i$, respectively, with a filtration time in $(s,t]$, i.e.,
\begin{align}
\begin{split}\label{E:MomentCondition3}
		&| \Delta_{q,n}(s,t) - \wt \Delta_{q,n,i}(s,t) | \\
		&\le
		\begin{cases}
			 \sum_{\sigma\in \cK_t \cup \cK'_{t,n,i}} \1{ r(\sigma) \in (s,t] , \sigma\cap Q(z_{n,i}) \neq \emptyset }, \text{ Poi. sampling}, \\
			\sum_{\sigma\in \cK_t \cup \cK'_{t,n,i}} \1{ r(\sigma) \in (s,t] , \sigma\cap \{X_i,X'_i\} \neq \emptyset },  \text{ bin. sampling}.
			\end{cases} 
			\end{split}
\end{align}
Furthermore, we can apply the following super-positioning principle of point processes. Clearly, we can compute the conditional expectation in \eqref{E:MomentCondition2} using five independent processes $\cP^{(0)}(n),\ldots, \cP^{(4)}(n)$, resp.\ $\mX^{(0)}_n,\ldots, \mX^{(4)}_n$. We use the process indexed by 0 for the outer expectation and the other four for each conditional expectation. Since this last upper bound is non decreasing in the number of points, we can use the joint process $\cP^*(n) = \cP^{(0)}(n) \cup \ldots \cup \cP^{(4)}(n)$, resp.\ $\mX^*_n = \mX^{(0)}_n \cup \ldots \cup \mX^{(4)}_n$ in the increments in \eqref{E:MomentCondition3} and obtain for the corresponding right-hand sides the following upper bounds
\begin{align}
\begin{split}\label{E:MomentCondition4}
		\begin{cases}
			 \sum_{\sigma\in \cK^*_t } \1{ r(\sigma) \in (s,t] , \sigma\cap Q(z_{n,i}) \neq \emptyset },  &\text{Pois. sampling}, \\
			\sum_{\sigma\in \cK^*_t} \1{ r(\sigma) \in (s,t] , \sigma\cap \{X^{(0)}_i,X^{(1)}_i,\ldots,,X^{(4)}_i\} \neq \emptyset }, &\text{bin. sampling},
			\end{cases} 
			\end{split}
\end{align}
where $\cK^*_t$ equals $\cK_t( \cP^*(n))$ resp.\ $\cK_t( \mX^*_n)$.

It is now a straightforward task, to calculate the relevant probabilities in the Poisson and binomial sampling scheme. We begin with the Poisson sampling scheme, where we use the spatial independence. For this let $\sigma_1$ (resp.\ $\sigma_2, \sigma_3,\sigma_4$) be a generic simplex intersecting with $Q(z_{n,i})$ (resp.\ $Q(z_{n,j}), Q(z_{n,k}), Q(z_{n,\ell})$). Then by Lemma~\ref{L:ContinuityVR} and Lemma~\ref{L:ContinuityCech}
\begin{align}
		&\p( r( \sigma_{1} ) \in (s,t], r( \sigma_{2} ) \in (s,t], r( \sigma_{3} ) \in (r,s], r( \sigma_{4} ) \in (r,s] , \nonumber \\
		&\qquad\qquad \sigma_1 \cap Q_{z_{n,i} } \neq \emptyset, \sigma_2 \cap Q_{z_{n,j} } \neq \emptyset, \sigma_3 \cap Q_{z_{n,k} } \neq \emptyset, \sigma_4 \cap Q_{z_{n,\ell} } \neq \emptyset   ) \nonumber \\
		&\le \p( r( \sigma_{1} ) \in (s,t], r( \sigma_{4} ) \in (r,s], \sigma_1 \cap Q_{z_{n,i} } \neq \emptyset, \sigma_4 \cap Q_{z_{n,\ell} } \neq \emptyset   )\nonumber  \\
	&\le C_{\dim \sigma_1}  C_{\dim \sigma_4}  | t- s | |s- r| \1{i \not\simeq \ell} + C_{\dim \sigma_1}  |t-s|  \1{ i \simeq \ell}. \label{E:MomentCondition5}
\end{align}
Note that this upper bound applies to each subcase (a), (b) and (c) because we only need to study the interplay between the random variables associated to $i$ and $\ell$.

Before completing the proof, we first provide similar bounds in the binomial sampling scheme. This time $\sigma_1$ is a generic simplex intersecting with the set $\{X^{(0)}_i,\ldots,X^{(4)}_i \} $, a similar notation is used for the indices $j,k,\ell$. Again, we can reduce the situation as follows
\begin{align}
		&\p( r( \sigma_{1} ) \in (s,t], r( \sigma_{2} ) \in (s,t], r( \sigma_{3} ) \in (r,s], r( \sigma_{4} ) \in (r,s], \sigma_1 \cap \{X^{(0)}_i,\ldots,X^{(4)}_i \} \neq \emptyset, \nonumber \\
		&\qquad \sigma_2 \cap  \{X^{(0)}_j,\ldots,X^{(4)}_j \} \neq \emptyset, \sigma_3 \cap  \{X^{(0)}_k,\ldots,X^{(4)}_k \} \neq \emptyset,   \sigma_4 \cap \{X^{(0)}_\ell,\ldots,X^{(4)}_\ell \} \neq \emptyset   ) \nonumber \\
		&\le \p( r( \sigma_{1} ) \in (s,t], r( \sigma_{4} ) \in (r,s], \sigma_1 \cap \{X^{(0)}_i,\ldots,X^{(4)}_i \} \neq \emptyset, \sigma_4 \cap \{X^{(0)}_\ell,\ldots,X^{(4)}_\ell \} \neq \emptyset   ) \nonumber  \\
		&\le C_{\dim \sigma_1} C_{\dim \sigma_4} |r-s| |s-t| \p\Big( d(  n^{1/d} \{X^{(0)}_i,\ldots,X^{(4)}_i \} ,  n^{1/d} \{X^{(0)}_\ell,\ldots,X^{(4)}_\ell \} ) \ge \delta \Big) \nonumber   \\
		&\quad +  C_{\dim \sigma_1}  |r-s| \p\Big( d(  n^{1/d} \{X^{(0)}_i,\ldots,X^{(4)}_i \} ,  n^{1/d} \{X^{(0)}_\ell,\ldots,X^{(4)}_\ell \} ) \le \delta \Big) \nonumber  \\
		&\le C_{\dim \sigma_1} C_{\dim \sigma_4} |s-r| |t-s| +  C_{\dim \sigma_1}  |s- r| \ c n^{-1}.  \label{E:MomentCondition6}
\end{align}
We can now complete \eqref{E:MomentCondition2} using the upper bounds from \eqref{E:MomentCondition3} and \eqref{E:MomentCondition4} for the martingale differences, and the upper bounds on the probabilities from \eqref{E:MomentCondition5}  and \eqref{E:MomentCondition6} for the Poisson and the binomial sampling scheme, respectively.

In the Poisson sampling scheme, we have to consider the quantity
\begin{align}\begin{split}\label{E:MomentCondition7}
	n^{-2} \sum_{i,\ell = 1}^{{b_n^\prime}} \sum_{q_1,\ldots,q_4 = 0}^{\infty} \mathbb{E} &\Big[  \sum_{ \substack{ \sigma_1 \in \cK^*_t, \\ \dim \sigma_1 = q_1 } }  \sum_{ \substack{ \sigma_2 \in \cK^*_t, \\ \dim \sigma_2 = q_2 } }  \sum_{ \substack{ \sigma_3 \in \cK^*_t, \\ \dim \sigma_3 = q_3 } }  \sum_{ \substack{ \sigma_4 \in \cK^*_t, \\ \dim \sigma_4 = q_4 } } \\
	&\qquad \1{ r(\sigma_1) \in (s,t], \sigma_1 \cap Q(z_{n,i}) \neq \emptyset }  \\
	&\qquad \1{ r(\sigma_2) \in (s,t], \sigma_2 \cap Q(z_{n,j}) \neq \emptyset } \\
	&\qquad \1{ r(\sigma_3) \in (r,s], \sigma_3 \cap Q(z_{n,k}) \neq \emptyset } \\
	&\qquad \1{ r(\sigma_4) \in (r,s], \sigma_4 \cap Q(z_{n,\ell}) \neq \emptyset }  \Big].
	\end{split}
\end{align}
In the binomial sampling scheme, we replace the "cube intersection conditions" in \eqref{E:MomentCondition7} with the second line in \eqref{E:MomentCondition5} containing the "point intersection conditions".

We begin with the Poisson sampling scheme. Denote by $p_{n,i}(m)$ (resp.\ $p_{n,\ell}(m)$) the probability that $Q(z_{n,i})^{(\delta)}$ (resp.\  $Q(z_{n,\ell})^{(\delta)}$) contains $m$ Poisson points of $\cP^*(n)$. We write $p_{n,0}(m)$ for the probability that $Q(z_{n,i})^{(\delta)} \cup Q(z_{n,\ell})^{(\delta)} $ contain $m$ Poisson points of $\cP^*(n)$. $p_{n,i}$, $p_{n,\ell}$ and $p_{n,0}$ follow a Poisson distribution with a Poisson parameter which can depend on the indices but which is uniformly bounded above.

We begin with the subcase (a), where $i=j$ and $k=\ell$. Then \eqref{E:MomentCondition7} amounts to
\begin{align*}
		& n^{-2} \sum_{i,\ell = 1}^{{b_n^\prime}} \1{ i \not \simeq \ell} \sum_{m_1,m_2 = 0}^\infty p_{n,i}(m_1) p_{n,\ell}(m_2) \\
		&\quad \sum_{q_1,q_2 = 0}^{m_1 - 1} \sum_{q_3,q_4 = 0}^{m_2-1} \binom{ m_1}{q_1+1} \binom{ m_1}{q_2+1}  \binom{ m_2}{q_3+1}  \binom{ m_2}{q_4+1} \\
		&\qquad\qquad\qquad\qquad\qquad\qquad\qquad\qquad \times C_{q_1} C_{q_4} |s-r| |t-s| \\
		&\quad + n^{-2} \sum_{i,\ell = 1}^{{b_n^\prime}} \1{ i  \simeq \ell} \sum_{m_0 = 0}^\infty p_{n,0}(m_0)  \\
		&\qquad \sum_{q_1,\ldots,q_4 = 0}^{m_0 - 1} \binom{ m_0}{q_1+1} \binom{ m_0}{q_2+1}  \binom{ m_0}{q_3+1}  \binom{ m_0}{q_4+1} C_{q_1} |t-s| \\
		&\le C ( |s-r| |t-s| + |t-s| n^{-1} ) \le 2C |s-r| |t-s|,
\end{align*}
where the last inequality follows because the interval length is bounded below by $n^{-1}$. The subcases (b) and (c) follow similarly, the only difference is that the binomial coefficients change somewhat. The conclusion is the same. So we find in all three cases that \eqref{E:MomentCondition7} is at most $C |s-r| |t-s|$.

In the binomial sampling scheme, we replace the Poisson distributions $p_{n,i}, p_{n,\ell}$ and $p_{n,0}$ by their binomial approximations conditional on the sets $\{X^{(0)}_i, \ldots, X^{(4)}_i \}$, $\{X^{(0)}_\ell, \ldots, X^{(4)}_\ell \}$ and $\{X^{(0)}_i, \ldots, X^{(4)}_i \} \cup \{X^{(0)}_\ell, \ldots, X^{(4)}_\ell \}$. Also, we replace the factor $n^{-1}$ by $|t-s|$ or $|s-r|$ because the interval length is bounded below by $n^{-1}$. With these preparations, it is a routine to verify that \eqref{E:MomentCondition7} is bounded above by $C|s-r| |t-s|$ for a $C\in\R_+$ independent of $n$. 
\end{proof}

\begin{proposition}\label{P:ContModi}
There is a continuous modification of $\fG$, whose sample paths are locally $\beta$-H{\"o}lder continuous for each $\beta\in (0,1/2)$.
\end{proposition}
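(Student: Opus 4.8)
The plan is to obtain continuity of a modification of $\fG$ from the Kolmogorov–Chentsov criterion, which requires only a polynomial moment bound on increments of the limit process. The key observation is that all the moment estimates needed have, in effect, already been established in the proof of Proposition~\ref{P:Tightness}. First I would recall that the finite-dimensional distributions of the prelimit processes $\ol\chi_n$ converge to those of $\fG$ (Proposition~\ref{P:MVN}), so by passing to the limit in the fourth-moment increment bound \eqref{E:MomentCondition1} I can transfer that estimate to the Gaussian limit. Concretely, for $0\le s\le t\le T$ the inequality
\begin{align*}
		\E{ |\ol\chi_n(s) - \ol\chi_n(t)|^4 } \le C\,|t-s|^2
\end{align*}
holds uniformly in $n$ (this is exactly the content of the moment computations culminating in \eqref{E:MomentCondition7}, specialized to the diagonal $r=s$), and since $\ol\chi_n(s)-\ol\chi_n(t)$ converges in distribution to $\fG(s)-\fG(t)$ with uniformly integrable fourth moments, Fatou's lemma (or uniform integrability) yields the same bound in the limit:
\begin{align}\label{E:ContModiMoment}
		\E{ |\fG(s) - \fG(t)|^4 } \le C\,|t-s|^2.
\end{align}

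Because $\fG$ is Gaussian, the increment $\fG(s)-\fG(t)$ is centered normal, so its higher even moments are all controlled by its variance; in particular \eqref{E:ContModiMoment} gives $\E{(\fG(s)-\fG(t))^2}\le C'|t-s|$, and then for every even integer $2p$,
\begin{align*}
		\E{ |\fG(s)-\fG(t)|^{2p} } = c_p\,\big(\E{ (\fG(s)-\fG(t))^2 }\big)^{p} \le c_p\,(C')^{p}\,|t-s|^{p},
\end{align*}
where $c_p$ is the $2p$-th absolute moment of a standard normal. Thus the increments satisfy a Kolmogorov–Chentsov bound $\E{|\fG(s)-\fG(t)|^{\alpha}} \le C_p\,|t-s|^{1+\eta}$ with $\alpha=2p$ and $\eta=p-1$, for every $p\ge 2$.

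Applying the Kolmogorov–Chentsov continuity theorem then produces a modification of $\fG$ whose sample paths are locally H\"older continuous of every order $\beta<\eta/\alpha=(p-1)/(2p)$. Letting $p\to\infty$ makes the exponent $(p-1)/(2p)\to 1/2$, so the modification is locally $\beta$-H\"older continuous for every $\beta\in(0,1/2)$, as claimed. The only genuinely delicate step is the transfer of the moment bound \eqref{E:ContModiMoment} from the prelimit to the limit: I expect the main obstacle to be justifying the interchange of the limit in distribution with the fourth moment, i.e. establishing uniform integrability of $|\ol\chi_n(s)-\ol\chi_n(t)|^4$. This follows once one controls a slightly higher moment uniformly in $n$, which is available from the bounded-moments machinery of Lemma~\ref{L:BoundedMomentsCondition} together with the martingale-difference decomposition used throughout Proposition~\ref{P:Tightness}; alternatively one may invoke Fatou directly for the lower-semicontinuity of moments under weak convergence, which suffices to obtain \eqref{E:ContModiMoment}.
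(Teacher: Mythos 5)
Your overall architecture (an increment moment bound for the limit process, upgraded by Gaussianity to all even moments, then Kolmogorov--Chentsov with the exponent sent to its limit) is exactly the paper's, but the step where you obtain the increment bound has a genuine gap. You claim that \eqref{E:MomentCondition1} ``specialized to the diagonal $r=s$'' yields $\E{|\ol\chi_n(s)-\ol\chi_n(t)|^4}\le C\,|t-s|^2$ uniformly in $n$. Setting $r=s$ in \eqref{E:MomentCondition1} makes the factor $\sum_{q}\Delta_{q,n}(r,s)-\E{\Delta_{q,n}(r,s)}$ vanish identically, so the inequality degenerates to $0\le 0$ and carries no information. This is not an accident of notation: the Bickel--Wichura condition is formulated as a product over \emph{adjacent disjoint} intervals, and only on the grid $\Gamma_n$, precisely because the single-increment fourth-moment bound you want is false for the jump process $\ol\chi_n$. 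Indeed, for $|t-s|\ll n^{-1}$ the increment is essentially $n^{-1/2}$ times a centered count of simplex filtration times with mean of order $n|t-s|$, so $\E{|\ol\chi_n(s)-\ol\chi_n(t)|^4}$ is of order $n^{-1}|t-s|+|t-s|^2$, and the first term dominates $|t-s|^2$ below the scale $1/n$. Hence there is no uniform-in-$(n,s,t)$ bound of the form $C|t-s|^2$ available to feed into Fatou's lemma as you state it.

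The gap is repairable in two ways. For fixed $s\neq t$ you can redo the Step 1/Step 2 computation in the proof of Proposition~\ref{P:Tightness} for a single increment, with all four martingale-difference factors attached to the same interval $(s,t]$; the bounds \eqref{E:MomentCondition5} and \eqref{E:MomentCondition6} then give $\E{|\ol\chi_n(s)-\ol\chi_n(t)|^4}\le C(|t-s|^2+n^{-1}|t-s|)$, whose $\liminf$ over $n$ is at most $C|t-s|^2$, and Fatou under weak convergence (no uniform integrability needed) yields the desired $\E{|\fG(s)-\fG(t)|^4}\le C|t-s|^2$. Simpler still, apply Fatou to the \emph{second} moment: the martingale-difference decomposition together with a bound of the type \eqref{E:MVN1} gives $\E{(\ol\chi_n(s)-\ol\chi_n(t))^2}\le C|t-s|$ uniformly in $n$ and for all $s,t$, hence $\E{(\fG(s)-\fG(t))^2}\le C|t-s|$, after which your Gaussian-moment and Kolmogorov--Chentsov steps go through verbatim. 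For comparison, the paper avoids the prelimit transfer altogether: it bounds $\E{(\fG_\kappa(t)-\fG_\kappa(s))^2}$ directly from the limiting covariance representation via the stabilization limits $\fD_\infty$ (see \eqref{E:C2}--\eqref{E:C5}, resting again on the computation \eqref{E:MVN1}), and then uses \eqref{E:C1} and Kolmogorov--Chentsov exactly as you do.
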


\begin{proof}[Proof of Proposition~\ref{P:ContModi}]
First we prove the claim for the Poisson sampling scheme. The claim for the binomial sampling scheme is then an immediate consequence as shown below.

 Let $0\le s \le t\le T$. We write $\fG_\kappa$ for the Gaussian limit of the EC, when the underlying density is $\kappa$. We use $\fG$ for the Gaussian limit if $\kappa$ is the uniform distribution on $[0,1]^d$.

Since $\fG_\kappa(t) - \fG_\kappa(s)$ follows a normal distribution, we have for each $k\in\N$
\begin{align}\label{E:C1}
	\E{ (\fG_\kappa(t) - \fG_\kappa(s) )^{2k} } = \prod_{i=1}^k (2i-1) \E{ (\fG_\kappa (t) - \fG_\kappa(s))^2 }^k.
\end{align}
Using the representation of the covariance function, we obtain in the Poisson case
\begin{align}
	&\E{ (\fG_\kappa(t) - \fG_\kappa(s))^2 } \nonumber \\
	&= \int_{[0,1]^d} \Big\{ \gamma( \kappa(z)^{1/d} (t,t)) - 2 \gamma( \kappa(z)^{1/d} (t,s) ) + \gamma( \kappa(z)^{1/d} (s,s)) \Big\} \kappa(z) \intd{z} \nonumber \\
	&=\int_{[0,1]^d} \E{ (\fG(\kappa(z)^{1/d} t) - \fG(\kappa(z)^{1/d} s))^2 } \kappa(z) \intd{z}.\label{E:C2}
\end{align}
Consider the expectation in \eqref{E:C2}. Using the definition of $\gamma$, we have for $s, t \ge 0,$
\begin{align}
	\E{ (\fG( t) - \fG( s))^2 } &= \E{ \E{ \fD_\infty(t,0) - \fD_\infty(s,0) | \cF_0 }^2 }  \nonumber \\
	 &\le \E{ (\fD_\infty(t,0) - \fD_\infty(s,0) )^2 }  \label{E:C3}
\end{align}
Given a simplicial complex $\cK$ and the set $Q=Q_0$, we write $S_k(\cK;Q)$ for the number of $k$-simplices in $\cK$ with one vertex in $Q$. Given the upper bound $T$, there is an $R\ge 0$ such that for all $0\le t \le T$, the limit $\fD_\infty(t,0)$ admits the representation
\begin{align}\begin{split}\label{E:C4}
	\fD_\infty(t,0) &= \sum_{q=0}^\infty (-1)^q \big\{ S_q( \cK_t(\cP \cap B(0,R)) ; Q ) \\
	&\qquad - S_q( \cK_t( [ (\cP \cap B(0,R))\setminus Q] \cup [\cP' \cap Q]) ; Q ) \big\}
	\end{split}
\end{align}
(see proof of Proposition~\ref{P:StrongStabilizingProperty}).

We can use the representation in \eqref{E:C4} to obtain the following upper bound for \eqref{E:C3} (up to a universal multiplicative constant)
\begin{align}\begin{split}\label{E:C5}
	&\mathbb{E}\Big[ \Big( \sum_{q=0}^\infty  \big\{ S_{q}( \cK_t(\cP \cap B(0,R)) ; Q ) -  S_{q}( \cK_s(\cP \cap B(0,R)) ; Q ) \big\} \Big)^2 \Big] .
	\end{split}
\end{align}
We follow the calculations as in \eqref{E:MVN1} to see that the expectation in \eqref{E:C5} is at most $C |s-t|$ for a universal constant $C$, which only depends on $T$.

Combining the estimates from \eqref{E:C1} to \eqref{E:C5} yields that $\E{ (\fG_\kappa(t) - \fG_\kappa(s) )^{2k} } \le C_k |t-s|^k$ for all $0\le s,t\le T$ for a universal constant $C_k\in\R_+$ for all $k\in\N$. Hence, by the Kolmogorov-Chentsov continuity theorem there is a continuous modification of $\fG$, which is $\beta$-H{\"o}lder continuous with exponent $\beta\in \cup_{k\ge 1}(0,(k-1)/(2k) )$.

If the binomial sampling scheme is used, we have
\begin{align}
		\E{ (\fG_\kappa(t) - \fG_\kappa(s))^2 } 
		&=\int_{[0,1]^d} \E{ (\fG(\kappa(z)^{1/d} t) - \fG(\kappa(z)^{1/d} s))^2 } \kappa(z) \intd{z} \nonumber \\
		&\quad - \Big\{ \int_{[0,1]^d} \big( \alpha( \kappa(z)^{1/d} t) - \alpha( \kappa(z)^{1/d} s ) \big) \ \kappa(z)  \intd{z} \nonumber \label{E:C6}
 \Big\}^2 \\
 &\le \int_{[0,1]^d} \E{ (\fG(\kappa(z)^{1/d} t) - \fG(\kappa(z)^{1/d} s))^2 } \kappa(z) \intd{z}  . \nonumber
\end{align}
Consequently, the claim follows from the previous arguments.
\end{proof}

\subsection{Results on the bootstrap}

\begin{proof}[Proof of Theorem~\ref{T:PointwiseValidity}]
The estimate $\hat\kappa_n$ is uniformly consistent, i.e. there is a random integer $N_0$ such that $\| \hat\kappa_n - \kappa\|_\infty\le \rho$ for all $n\ge N_0$. So, we can apply \eqref{E:ApproximationEuler0} from Theorem~\ref{T:ApproximationEuler} and \eqref{E:KolmogorovApproximationEuler0} from Theorem~\ref{T:NormalApproximation} to obtain the desired result.
\end{proof}

\begin{proof}[Proof of Theorem~\ref{T:Validity}]
The assertion of Theorem~\ref{T:Validity} is an immediate application of Theorem~\ref{T:FunctionalWasserstein}.
\end{proof}


\appendix

\section{Multivariate asymptotic normality}\label{Appendix}
The proof is quite similar to the proof of Theorem~2.1 and 3.1 in \cite{penrose2001central}. However, we cannot immediately apply their theorem because it formally only applies to the density function $\kappa=1$. Moreover, the EC is not necessarily polynomially bounded. Straightforward calculations show, however, that it is exponentially bounded. Indeed, let $P$ be a finite point cloud, then
\begin{align*}
		\chi( \cK_t(P)) &=  \sum_{k=0}^{ \# P -1 }  (-1)^k  S_k( \cK_t ) \le \sum_{k=0}^{ \# P -1 } \binom{\# P}{k+1} = 
		2^{\# P} - 1\le \exp( \# P ).
\end{align*}
Furthermore, as we treat the multivariate case, we want to obtain an analytic expression of the covariance structure of the Gaussian process appearing in the limit. For this we have to carry out the entire proof. However, since the EC is closely related to persistent Betti numbers, we can use the ideas laid out in \cite{krebs2019betti} as a blueprint and so we will only sketch the main points here.

\begin{proposition}[Multivariate asymptotic normality]\label{P:MVN}
Let $m\in\N$, $a_1,\ldots,a_m \in \R$ and $t_1,\ldots,t_m \in [0,T]$ be arbitrary but fixed. Let $Z_n$ be either the Poisson process $\cP(n)$ or the binomial process $n^{1/d}\mX_n$. Then $	\sum_{u=1}^m a_u \ \ol \chi( \cK_{t_u} ( Z_n ) )$ tends to a normal distribution as $n\to\infty$ with mean zero. In the Poisson case, the covariance is determined by the limit
$$
			\lim_{n\to\infty} \cov( \ol \chi( \cK_{s} ( \cP(n) ) ), \ol \chi( \cK_{t} ( \cP(n) ) ) ) = \mathbb{E}[ \gamma( \kappa(Z)^{1/d} (s,t) ) ],
			$$
where $Z$ has density $\kappa$ and $\gamma(s,t) =  \E{ \E{ \fD_\infty(s,0) \ | \ \cF_0} \ \E{\fD_\infty(t,0) \ | \ \cF_0}	} $, and in the binomial case by
\begin{align*}
		&\lim_{n\to\infty} \cov( \ol \chi( \cK_{s} ( n^{1/d} \mX_n ) ), \ol \chi( \cK_{t} ( n^{1/d} \mX_n ) ) ) \\
		&= \mathbb{E}[ \gamma( \kappa(Z)^{1/d} (s,t) ) ] -  \mathbb{E}[\alpha(\kappa(Z)^{1/d} s)] \mathbb{E}[\alpha(\kappa(Z)^{1/d} t) ],
\end{align*}
where $\alpha(t) =  \mathbb{E}[ \Delta_\infty (t) ] $.
\end{proposition}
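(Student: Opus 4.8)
The plan is to prove that the scalar random variable $W_n \coloneqq \sum_{u=1}^m a_u\,\ol\chi(\cK_{t_u}(Z_n))$ is asymptotically normal; by the Cram\'er--Wold device this is equivalent to the asserted joint convergence. As each summand is centered, $\E{W_n}=0$, so only the Gaussian shape and the limiting (co)variance require work. Following Penrose and Yukich \cite{penrose2001central}, with the martingale machinery of Krebs and Polonik \cite{krebs2019betti} as a blueprint, I would decompose $W_n$ into martingale differences along the filtration $(\cG_{n,j})$ from the beginning of Section~\ref{Section_TechnicalResults},
\[
 W_n=\sum_{j=1}^{{b_n^\prime}} D_{n,j},\qquad D_{n,j}=\E{W_n\mid\cG_{n,j}}-\E{W_n\mid\cG_{n,j-1}},
\]
and verify the hypotheses of a martingale central limit theorem (McLeish \cite{mcleish1974dependent}): asymptotic negligibility of the increments together with convergence in probability of $\sum_{j}\E{D_{n,j}^2\mid\cG_{n,j-1}}$ to a deterministic limit, which will be the announced covariance.

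Writing $D_{n,j}=n^{-1/2}\widehat D_{n,j}$, the unnormalized increment $\widehat D_{n,j}$ is dominated in $L^p$, uniformly in $j,n$ and $t\in[0,T]$, by the first-order difference $\wt\fD'_n(t,z_{n,j})$ (and its binomial counterpart); Lemma~\ref{L:BoundedMomentsCondition} bounds all such $L^p$ norms, and combined with the elementary exponential bound $\chi(\cK_t(P))\le\exp(\#P)$ recorded above, this gives the Lindeberg-type negligibility after the $n^{-1/2}$ scaling. For the conditional variances the strong stabilization of the EC (Proposition~\ref{P:StrongStabilizingProperty}) is decisive: $\widehat D_{n,j}$ is determined by the sample inside the $\delta$-neighborhood $Q(z_{n,j})^{(\delta)}$ with $\delta=2T$ fixed, and as $n\to\infty$ the rescaled intensity $\kappa(\cdot/n^{1/d}+e_d/2)$ is asymptotically constant on this shrinking window, so the local picture converges to a homogeneous Poisson process of intensity $\lambda=\kappa(x_j)$, where $x_j\coloneqq z_{n,j}/n^{1/d}+e_d/2\in[0,1]^d$. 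Consequently $\E{\widehat D_{n,j}^2\mid\cG_{n,j-1}}$ converges to the second moment of the stabilized increment $\E{\fD_\infty(\cdot,0)\mid\cF_0}$ evaluated at intensity $\lambda$.

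The covariance is then identified via the scaling relation for the stabilizing score: rescaling space by $\lambda^{1/d}$ sends an intensity-$\lambda$ process at filtration time $t$ to a unit-intensity process at time $\lambda^{1/d}t$, and the resulting computation gives the intensity-$\lambda$ version of $\gamma$ as $\lambda\,\gamma(\lambda^{1/d}(s,t))$, so that the per-cube contribution to the $(s,t)$-covariance is $\kappa(x_j)\,\gamma(\kappa(x_j)^{1/d}(s,t))$. Since $\sum_j\E{D_{n,j}^2\mid\cG_{n,j-1}}=n^{-1}\sum_j\E{\widehat D_{n,j}^2\mid\cG_{n,j-1}}$ is a Riemann sum over the uniformly equidistributed points $x_j$,
\[
 n^{-1}\sum_{j=1}^{{b_n^\prime}}\kappa(x_j)\,\gamma\big(\kappa(x_j)^{1/d}(s,t)\big)\ \longrightarrow\ \int_{[0,1]^d}\gamma\big(\kappa(x)^{1/d}(s,t)\big)\,\kappa(x)\intd{x}=\E{\gamma\big(\kappa(Z)^{1/d}(s,t)\big)},
\]
with $Z\sim\kappa$, the weight $\kappa(x)$ stemming from the $\lambda$-proportional number of resampled points in each cube (whereas the cubes themselves equidistribute uniformly). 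This yields the Poisson covariance. For the binomial scheme I would descend from the Poisson result by de-Poissonization as in \cite{penrose2001central}: conditioning on the fixed count $n$ subtracts the projection of the increment onto the total number of points, and the cleaner scaling $\alpha(t)=\E{\Delta_\infty(t)}$, $\alpha^\lambda(t)=\alpha(\lambda^{1/d}t)$, turns this into the correction $\E{\alpha(\kappa(Z)^{1/d}s)}\,\E{\alpha(\kappa(Z)^{1/d}t)}$.

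The main obstacle I anticipate is the rigorous passage to the limit in the conditional variances for a non-constant density and a functional that is only exponentially---rather than polynomially---bounded, since the homogeneous theory of Penrose and Yukich does not apply verbatim. I would handle this by first approximating $\kappa$ by blocked (piecewise-constant) densities, applying the homogeneous stabilization theory block by block, and then letting the block size tend to zero, throughout using the uniform moment control of Lemma~\ref{L:BoundedMomentsCondition} to justify the interchange of limits; keeping track of the de-Poissonization correction in the binomial case is the second delicate point.
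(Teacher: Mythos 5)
Your proposal follows essentially the same route as the paper's own proof: the same martingale-difference decomposition along the filtration $(\cG_{n,j})$, McLeish's CLT with the negligibility conditions supplied by Lemma~\ref{L:BoundedMomentsCondition}, identification of the limiting covariance via the strong stabilization of Proposition~\ref{P:StrongStabilizingProperty} together with the scaling relation, the passage from constant to blocked to general densities satisfying \eqref{E:PropertyKappa}, and de-Poissonization for the binomial scheme. The argument is correct and matches the paper's proof in all essential respects.
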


\begin{proof}[Proof of Proposition~\ref{P:MVN}] Define $H$ by $n^{-1/2} H (Z_n) = \sum_{u=1}^m a_u \ \ol \chi( \cK_{t_u} ( Z_n ) )$. First, we consider the

\textit{Poisson sampling scheme.} Recalling that $
  		\cP'^{\,z}(n) = (\cP(n)\setminus Q(z ) \cup ( \cP'(n) \cap Q(z ) )$
and $\cP'^{\,i}(n) = \cP'^{\,z_{n,i}}(n)$ for  $i \in [n]$,  write
 \begin{align*}
	n^{-1/2} (H(\cP(n)) - \E{ H(\cP(n))} ) &= n^{-1/2} \sum_{i=1}^{{b_n^\prime}} \E{ H(\cP(n)) - H( \cP'^{\,i}(n)) | \cG_{n,i} } \\
	& =: n^{-1/2} \sum_{i=1}^{{b_n^\prime}} D_{n,i}.
	\end{align*}
We verify the conditions of the central limit theorem given in \cite{mcleish1974dependent}:
\begin{itemize}\setlength\itemsep{0em}
	\item [(1)] $\sup_{n\in\N} \frac{1}{b_n^\prime} \E{ \max_{1\le i \le {b_n^\prime} } D_{n,i} ^{2} } < \infty.$
	\item  [(2)] $\frac{1}{\sqrt{b_n^\prime}} \max_{1\le i\le {b_n^\prime}} |D_{n,i}| \to 0 $ in probability as $n\to\infty$.
	\item  [(3)] $\frac{1}{b_n^\prime} \sum_{i=1}^{{b_n^\prime}} D_{n,i} ^2 \to \sigma^2$ in $L^1(P)$ for some $\sigma>0$ depending on $(a_1,\ldots,a_m)^\prime$ and $(t_1,\ldots,t_m)^\prime$.
\end{itemize}
 The positivity of $\sigma$ follows from Proposition~\ref{P:Positivity}. Conditions (1) and (2) follow from Lemma~\ref{L:BoundedMomentsCondition}. Indeed, consider (1), which is less than $\sup_{n\in\N}  \max_{1\le i \le {b_n^\prime} } \E{  D_{n,i}^2  }$.
Now $ \mathbb{E}[ D_{n,i}^2 ]$ is bounded above in terms of the single differences $\mathbb{E}[ | \chi(\cK_{t_u,n}) - \chi( \cK'_{t_u,n,i}) |^2 ]$, $1\le u\le m$, and these expressions are bounded above uniformly in $n$ and $i$ by Lemma~\ref{L:BoundedMomentsCondition}. Regarding the property (2), we use that
\[
		\E{ \Big( {b_n^\prime}^{-1/2} \max_{1\le i\le {b_n^\prime}} |D_{n,i}| \Big)^4 } \le  \sup_{n\in\N}  \ \Big({b_n^\prime}^{-1} \  \max_{1\le i \le {b_n^\prime} } \E{  D_{n,i}^4  } \Big),
\]
which tends to zero by Lemma~\ref{L:BoundedMomentsCondition}. This shows (2). Finally, we verify (3). Clearly, 
$$	\sum_{i=1} ^{{b_n^\prime}} D_{n,i}^2 = \sum_{u,v=1}^m a_u a_v  \sum_{i=1}^{{b_n^\prime}} D_{n,i}(t_u) D_{n,i} (t_v),$$ where
$
	D_{n,i}(t)=\E{ \chi( \cK_t( \cP(n) ) ) - \chi( \cK_t(  \cP'^{\,i}(n) )  ) \ | \ \cG_{n,i} }$.
We show that for each pair $(s,t)$ the random variable ${b_n^\prime}^{-1} \sum_{i=1}^{{b_n^\prime}} D_{n,i}(s) D_{n,i}(t)  $ attains the limit stated in the theorem. This is done in three steps. (i) We derive the covariance structure in the case where $\kappa \equiv 1$. (ii) We verify the claim if $\kappa$ is a blocked density of the form $\sum_{i=1}^{m^d} b_i \mathds{1}_{A_i}$. (iii) Finally, using the approximation result Theorem~\ref{T:ApproximationEuler}, we show the result for general density functions which satisfy \eqref{E:PropertyKappa}.

\textit{Step (i).} In this step, let $\kappa \equiv 1$ and let $s,t\in\R_+$. By construction $\cP(n)$ is the restriction of a homogeneous Poisson process to the cube $[-n^{1/d}/2, n^{1/d}/2 ]^d$. It is an immediate consequence of the strong stabilizing property outlined in Proposition~\ref{P:StrongStabilizingProperty} that there is an $N_0$ depending on $z$ and $T$ such that for all $n\ge N_0$ and $t\le T$
\[
	\chi( \cK_t( \cP(n) ) ) - \chi( \cK_t( \cP'^{\,z}(n) )  ) = \fD'_n(t,z) =: \fD_\infty(t,z) \quad a.s.
\]
for a certain random variable $\fD_\infty(t,z)$. Applying similar techniques as in the proof of Proposition 5.5 in \cite{krebs2019betti}, it can be shown that
\begin{align*}
		{b_n^\prime}^{-1} \sum_{i=1}^{{b_n^\prime}} D_{n,i}(s) D_{n,i}(t) &\to \E{ \E{ \fD_\infty(s,0) \ | \ \cF_0} \ \E{\fD_\infty(t,0) \ | \ \cF_0}	} \\
		& =: \gamma(s,t) \quad a.s. \text{ and in } L^1(P) \quad (n\to\infty). 
\end{align*}
Furthermore, the function $\gamma$ is continuous, this follows from the continuity of the variance function $\sigma^2(t) = \gamma(t,t)$ which we will prove now. 
Clearly, the variance function is bounded on each finite interval $[0,T]$ by the bounded moments condition from Lemma~\ref{L:BoundedMomentsCondition}. We have by construction 
\begin{align*}
		& |	\sigma^2(t) - \sigma^2(s) | = \big | \lim_{n\to\infty}  \E{ \ol \chi_n(t)^2 - \ol\chi_n(s)^2 } \big | \\
		&\le \sqrt{2} \ \limsup_{n\to\infty} \E{ (\ol\chi_n(t) - \ol\chi_n(s))^2 }^{1/2} \ \lim_{n\to\infty} \Big(\E{\ol\chi_n(t)^2}^{1/2} +  \E{\ol\chi_n(s)^2}^{1/2} \Big)\\
		&\le  \sqrt{2} \  \limsup_{n\to\infty} \E{ (\ol\chi_n(t) - \ol\chi_n(s))^2 }^{1/2}  \ (\sqrt{\sigma^2(t)} + \sqrt{\sigma^2(s)} ).
\end{align*}
Consider the difference	$\ol\chi_n(t) - \ol\chi_n(s)$ which can be written in terms of martingale differences as
\begin{align*}
	\E{ |\ol\chi_n(t) - \ol\chi_n(s)|^2 } &\le n^{-1} \sum_{i=1}^{{b_n^\prime}} \E{ | \chi(\cK_{t,n}) -\chi(\cK_{s,n})  - \chi(\cK'_{t,n,i})  + \chi(\cK'_{s,n,i}) |^2 }.
\end{align*}
Hence, it suffices to show that for a given offset $Q' = Q(z)^{(\delta)}$, for some $\delta>0$,
\begin{align}\begin{split}\label{E:MVN1}
		&\E{	\Big| \sum_{q=0}^\infty \# \{ \sigma\in \cK_{t,n}\setminus \cK'_{t,n,i} \ | \ \sigma\subset Q' , \dim \sigma = q, r(\sigma)\in (s,t] \} \Big|^2	} \\
		&\le C |s-t|
\end{split}\end{align}
for some constant $C$ which is independent of $z$ and $n$. To this end, denote the number of Poisson points in $Q'$ by $N$. Then $N$ is Poisson with parameter $\lambda$, say, that is bounded above by the Lebesgue measure of $Q'$ (because $\kappa = 1$). Let $\sigma_1$ be a $q_1$-simplex and $\sigma_2$ a $q_2$-simplex. One can show that
\[
		\p( r(\sigma_1) \in (s,t], r(\sigma_2) \in (s,t] )  \le \p( r(\sigma_1) \in (s,t] ) \le C q_1^p |s-t|
\]
for a certain $p\in\R_+$ which depends on the filtration (see Lemma~\ref{L:ContinuityVR} and Lemma~\ref{L:ContinuityCech}). Furthermore, we obtain for the factorial moment $\mathbb{E}[ N! / (N-q)! \1{N\ge q}] = \lambda^q$ for each $q\in\N_0$. Then, up to a multiplicative constant and using $\ell$ for the number of common points of the simplices $\sigma_1$ and $\sigma_2$, the left-hand side of \eqref{E:MVN1} is at most 
\begin{align*}
		& \sum_{q_2 = 0}^\infty \sum_{q_1=0}^{q_2}  \sum_{\ell=0}^{q_1+1} \E{ \binom{N}{\ell} \binom{N-\ell}{q_1+1-\ell} \binom{N-(q_1+1)}{q_2+1 - \ell}  \1{N \ge (q_1+1 + q_2+1 - \ell}  }q_1^p  |s-t| \\
		&=  \sum_{q_2 = 0}^\infty \sum_{q_1=0}^{q_2}  \sum_{\ell=0}^{q_1+1} \E{ \frac{N! \1{N \ge (q_1+1 + q_2+1 - \ell} }{(N - (q_1+1 + q_2+1 - \ell))!} } \frac{q_1^p }{\ell! (q_1+1-\ell)! (q_2+1-\ell)! } |s-t| \\
		&\le \sum_{\ell=0}^\infty \sum_{q_2=\ell-1}^{\infty}  \sum_{q_1=\ell-1}^{\infty}  \lambda^{ q_1+1 + q_2+1 - \ell }  \frac{q_1^p  }{\ell! (q_1+1-\ell)! (q_2+1-\ell)! } |s-t| \\
    		&= \sum_{\ell=0}^\infty \sum_{q_2=\ell-1}^{\infty}  \sum_{q_1=\ell-1}^{\infty} \frac{ \lambda^{q_1-(\ell-1)} q_1^p }{ (q_1 - (\ell-1) )! } \frac{ \lambda^{q_2-(\ell-1)} }{ (q_2 - (\ell-1) )! } \frac{ \lambda^\ell }{\ell!} |s-t| \le C |s-t|.
\end{align*}
This shows $| \sigma^2(t) - \sigma^2(s) | \le C \sqrt{|t-s|}$ for all $0\le s,t\le T$ and completes step (i).

\textit{Step (ii).} Let $\kappa$ be a blocked density of the form $\sum_{i=1}^{m^d} b_i \mathds{1}_{A_i}$ for positive numbers $b_i$ and a partition $(A_i)_{i=1}^{m^d}$ of $[0,1]^d$ into rectangular sets $A_i = \times_{i=1}^d I_{i,j_i}$, where the $(I_{i,j})_{j=1}^m$ partition $[0,1]$ in intervals of length $m^{-1}$. We have that
\[
		{b_n^\prime}^{-1} \sum_{i=1}^{{b_n^\prime}} D_{n,i}(s) D_{n,i}(t) \to \E{ \gamma( \kappa(Z)^{1/d}(s,t) ) } \quad a.s. \text{ and in } L^1(\p)
\]
for a random variable $Z$, which is distributed according to the blocked density $\kappa$. The calculations are similar to those in the proof of Proposition~5.7 in \cite{krebs2019betti}, we omit the details. This completes step (ii).

\textit{Step (iii).} Let $\epsilon>0$. Let $\nu$ be a blocked density function which approximates $\kappa$ uniformly such that $\| \nu - \kappa \|_\infty \le \epsilon$. Note that this is possible because $\kappa$ satisfies \eqref{E:PropertyKappa}. Using the result from Theorem~\ref{T:ApproximationEuler}, 
$
		\sup_{t\in [0,T] } \V( \ol \chi_{\kappa,n}(t) - \ol \chi_{\nu,n}(t) ) \le C \epsilon$,
for a certain constant $C\in\R_+$, which is independent of $n$. Hence, there is a constant $C'\in\R_+$, which is independent of $n$ such that
\[
		\sup_{s,t\in [0,T] } \big| \cov( \ol \chi_{\kappa,n}(s), \ol \chi_{\kappa,n}(t) ) - \cov( \ol \chi_{\nu,n}(s) , \ol \chi_{\nu,n}(t) ) \big| \le C \epsilon.
\]
Together with the continuity of $(s,t)\mapsto \gamma(s,t)$ and the results from Step (ii), this shows that ${b_n^\prime}^{-1} \sum_{i=1}^{{b_n^\prime}} D_{n,i}(s) D_{n,i}(t) \to \mathbb{E}[\gamma(\kappa(Z)^{1/d}(s,t)) ] $ $a.s.$ and in $L^1(\p)$, where $Z$ has density $\kappa$. In particular, $\V(\ol\chi_{\kappa,n}(t))\to \mathbb{E}[\gamma(\kappa(Z)^{1/d}(t,t)) ]$. So the calculations of step (3) are complete.

\textit{The binomial sampling scheme.} The result follows as in \cite{krebs2019betti}  using Poissonization arguments and the ideas of \cite{penrose2001central}; we only give a sketch and omit the technical details. (The arguments are very straightforward for the EC because the radius of stabilization is bounded, see also \cite{krebs2020law} for approximation results in the binomial sampling scheme.) 

Using the just cited sources, it is not difficult to show that for a general density $\kappa$ on $[0,1]^d$
\begin{align*}
		&\cov( \ol \chi_{\kappa,n}(s), \ol \chi_{\kappa,n}(t) ) \\
		&\to \E{ \gamma( \kappa(Z)^{1/d}(s,t) ) } - \E{ \mathbb{E}[ \Delta_\infty(s,\cP^h_{Z} ) ]}  \E{ \mathbb{E}[ \Delta_\infty(t,\cP^h_{Z} ) ] },
\end{align*}
where we extend the definition of $\Delta_\infty$ from \eqref{D:DeltaInf} to the homogeneous Poisson process on $\R^d$ with intensity $\tau>0$, which we denote by $\cP^h_{\tau}$ at this point. This is we formally replace $\cP$ by $\cP^h_\tau$ in \eqref{D:DeltaInf}; obviously the limit exists $a.s.$ by the same arguments. Then using the scaling properties of the \v Cech and Vietoris-Rips filtration, $\cK_{a t}  (P ) = \cK_t (aP)$ for any (finite) point cloud $P$ and filtration parameters $a,t>0$. Moreover, the distributions of the two homogeneous Poisson processes $ a \cP^h_\tau$ and $\cP^h_{a^{-d} \tau} $ coincide for all $\tau,a >0$. This implies
$ \mathbb{E}[ \Delta_\infty(t,\cP^h_\tau) ] =  \mathbb{E}[ \Delta_\infty(\tau^{1/d} t,\cP^h_1) ] $.
\end{proof}

\end{document}